\tikzset{
>=stealth',
  punktchain/.style={
    rectangle,
    rounded corners,
    % fill=black!10,
    draw=black, thick,
    minimum height=3em,
    text centered,
    on chain},
  line/.style={draw, thick, <-},
  element/.style={
    tape,
    top color=white,
    bottom color=blue!50!black!60!,
    minimum width=8em,
    draw=blue!40!black!90, very thick,
    text width=10em,
    minimum height=3.5em,
    text centered,
    on chain},
  every join/.style={->, thick,shorten >=1pt},
  decoration={brace},
  tuborg/.style={decorate},
  tubnode/.style={midway, right=2pt},
}
\renewcommand\_{^{}_}
\renewcommand\;{\hspace{.6pt}}
\newcommand\C{\mathbb C}
\newcommand\R{\mathbb R}
\newcommand\Z{\mathbb Z}
\newcommand\cA{\mathcal A}
\newcommand\cW{\mathcal W}
\newcommand\cD{\mathcal D}
\newcommand\cH{\mathcal H}
\newcommand\cO{\mathcal O}
\newcommand\ch{\operatorname{ch}}
\newcommand\Hom{\operatorname{Hom}}
\newcommand\Pic{\operatorname{Pic}}
\newcommand\Coh{\operatorname{Coh}}
\newcommand\cok{\operatorname{coker}}
\newcommand\bn{\operatorname{BN}}
\newcommand\ev{\operatorname{ev}}
\newcommand\p{\operatorname{P}}
\renewcommand\v{\mathsf v}
\newcommand\im{\operatorname{im}}
\newcommand\al{\alpha}
\def\Stab{\mathop{\mathrm{Stab}}\nolimits}
\def\abs#1{\left\lvert#1\right\rvert}
\newcommand\beq[1]{\begin{equation}\label{#1}}
\newcommand\eeq{\end{equation}}
\newcommand\beqa{\begin{eqnarray*}}
\newcommand\eeqa{\end{eqnarray*}}
\newtheorem*{rep@theorem}{\rep@title}
\newcommand{\newreptheorem}[2]{%
\newenvironment{rep#1}[1]{%
 \def\rep@title{#2 \ref{##1}}%
 \begin{rep@theorem}}%
 {\end{rep@theorem}}}
\newtheorem{Thm}{Theorem}[section]
\newtheorem{Thm*}{Theorem}
\newtheorem{Prop}[Thm]{Proposition}
\newtheorem{Lem}[Thm]{Lemma}
\newtheorem{Cor}[Thm]{Corollary}
\newtheorem{thm-int}{Theorem}
\theoremstyle{definition}
\newtheorem{Def-s}[Thm]{Definition}
\newtheorem{Def}[Thm]{Definition}
\newtheorem{Rem}[Thm]{Remark}
\newcommand{\ignore}[1]{}
\begin{document}
\title{\ \vspace{-15mm} \\ Hyperk\"{a}hler varieties as Brill-Noether loci on curves \vspace{-3mm}}
\author{Soheyla Feyzbakhsh\vspace{-6mm}}
\maketitle

\begin{abstract}
	Consider the moduli space $M_C(r; K_C)$ of stable rank r vector bundles on a curve $C$ with canonical determinant, and let $h$ be the maximum number of linearly independent global sections of these bundles. If $C$ embeds in a K3 surface $X$ as a generator of $\Pic(X)$ and the genus of $C$ is sufficiently high, we show the Brill-Noether locus $\bn_C \subset M_C(r; K_C)$ of bundles with $h$ global sections is a smooth projective Hyperk\"{a}hler manifold of dimension $2g -2r \lfloor \frac{g}{r}\rfloor$, isomorphic to a moduli space of stable vector bundles on $X$. 
	%Fix a K3 surface $X$ with $\Pic(X)=\mathbb{Z}\cdot H$ and a curve $C \in |H|$ of genus $g$. Consider the moduli space $M_C(r, K_C)$ of stable rank $r$ vector bundles on $C$ with canonical determinant, and let $h$ be the maximum number of linearly independent global sections of these bundles. In this paper, we show that if the genus of $C$ is high enough, the Brill-Noether locus $\bn_C \subset M(r, K_C)$ of vector bundles possessing $h$ global sections is a smooth projective Hyperk\"{a}hler manifold of dimension $2g -2r \lfloor \frac{g}{r}\rfloor$, which is isomorphic to a moduli space of stable vector bundles on $X$.  
\end{abstract}
\bigskip

\section{Introduction}\label{section.intro}

%The goal of the paper is to prove any Brill-Noether locus on a K3 curve which partametrises rank $r$ stable bundles of degree $d$

In this paper, we show that specific Brill-Noether loci of rank $r$-stable vector bundles on K3 curves are isomorphic to moduli spaces of stable vector bundles on K3 surfaces. This generalises Mukai's program \cite{mukai:non-abelian-brill-noether,arbarello:maukai-program,feyz:mukai-program,feyz:mukai-program-ii} to Brill-Noether loci on K3 curves of dimension higher than $2$.

 Fix $(r, k) \in \mathbb{Z}_{>0} \oplus \mathbb{Z}_{>0}$ such that $\gcd(r, k) =1$ and $k < r$. Then take $g \gg 0$. Let $(X, H)$ be a polarised K3 surface with $\Pic(X) = \mathbb{Z} \cdot H$ and let $C$ be any curve in the linear system $|H|$ of genus $g$. There is a unique $s \in \mathbb{Z}$ such that
\begin{equation}\label{assumption}
-2 \leq k^2H^2 -2rs < -2 +2r\, . 
\end{equation} 
We further assume $\gcd(s, k) =1$. Consider the Brill-Noether locus $\bn_C(r, k(2g-2), r+s)$ of semistable\footnote{See Definition \ref{def-stability}.% for the notion of (semi)stability of bundles on $C$.
} rank $r$-vector bundles on $C$ having degree $kH^2 =k(2g-2)$ and possessing at least $r+s$ linearly independent global sections. Also $M_X(\v)$ denotes the moduli space of $H$-Gieseker semistable sheaves on $X$ with Mukai vector $\v = (r, kH, s)$. It is a (non-empty) smooth quasi-projective variety\footnote{See for instance \cite[Chapter 10, Corollary 2.1 \& Theorem 2.7]{huybretchts:lectures-on-k3-surfaces}.} of dimension $\v^2+2 = k^2H^2-2rs+2$.
\begin{Thm}\label{thm}
	There is an isomorphism 
	\begin{equation*}
	\Psi \colon M_X(\v) \rightarrow \bn_C(r, k(2g-2), r+s)
	\end{equation*}
	which sends a vector bundle\footnote{Any $H$-Gieseker semistable sheaf of Mukai vector $\v$ is locally free, see Lemma \ref{lem-locally-free}.} $E$ on $X$ to its restriction $E|_C$.    
\end{Thm}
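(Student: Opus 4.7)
The strategy consists of three steps: verify that $\Psi$ is a well-defined morphism, construct a setwise inverse, and conclude that $\Psi$ is an isomorphism of schemes via a tangent space identification. Throughout, the hypothesis $g \gg 0$ will be used to ensure that all relevant walls in Bridgeland stability space collapse into a single chamber.

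\emph{Well-definedness.} Given $E \in M_X(\v)$, it is locally free of rank $r$ by Lemma \ref{lem-locally-free}, hence $\deg(E|_C) = c_1(E) \cdot H = kH^2 = k(2g-2)$. Riemann--Roch on the K3 surface $X$, phrased via the Mukai pairing, gives $\chi(E) = -\langle(1,0,1),\v\rangle = r+s$; stability of $E$ together with the ampleness of $kH$ yields $H^2(X,E) = 0$ by Serre duality, and the Mukai bound $\v^2 \geq -2$ forces $H^1(X,E) = 0$, so $h^0(X,E) = r+s$. The analogous vanishings $H^0(X, E(-H)) = H^1(X, E(-H)) = 0$ together with the restriction sequence
\begin{equation*}
0 \;\to\; E(-H) \;\to\; E \;\to\; E|_C \;\to\; 0
\end{equation*}
yield $h^0(C, E|_C) \geq r+s$. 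Semistability of $E|_C$ on $C$ follows from a Mehta--Ramanathan/Flenner-type restriction theorem, valid once $g$ is large.

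\emph{Inverse construction (the main obstacle).} Given $F \in \bn_C$, push forward $i_*F$ to $X$ and form the Lazarsfeld--Mukai-type object
\begin{equation*}
E^{\bullet} \;:=\; \mathrm{cone}\bigl(\mathcal{O}_X^{\oplus(r+s)} \rt{\ev} i_*F\bigr)[-1],
\end{equation*}
built from $r+s$ linearly independent sections of $F$. A direct Chern character computation gives $v(E^\bullet) = \v$, and by construction one reads off $E^\bullet|_C \cong F$. The hard point is to show that $E^\bullet$ is a genuine $H$-Gieseker stable vector bundle. The plan is to place $i_*F$ in the heart of a suitably tilted Bridgeland stability condition $\sigma$ on $\Coh(X)$, observe that $i_*F$ and $\mathcal{O}_X^{\oplus(r+s)}$ lie on a common numerical wall, and use $g \gg 0$ together with the semistability of $F$ on $C$ to rule out every other wall between the defining triangle and the large-volume chamber, where $\sigma$-stability is equivalent to Gieseker stability. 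This yields the desired $E$ with $\Psi(E) = F$.

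\emph{Injectivity and conclusion.} Suppose $E, E' \in M_X(\v)$ satisfy $E|_C \cong E'|_C$. Applying $\Hom(E,-)$ to the restriction sequence for $E'$ and using $\Hom(E, E'(-H)) = \Ext^1(E, E'(-H)) = 0$ (by stability and a Mukai-pairing count) gives $\Hom(E, E') \cong \Hom(E|_C, E'|_C)$, so the given isomorphism on $C$ lifts to an isomorphism on $X$. Finally, $\Psi$ is a morphism via the universal family, and comparing deformations through the restriction sequence provides a natural identification $T_{[E|_C]}\bn_C \cong \Ext^1_X(E,E) = T_{[E]}M_X(\v)$, so $\Psi$ is étale. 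The numerical constraint \eqref{assumption} gives $\dim M_X(\v) = \v^2 + 2 = 2g - 2r\lfloor g/r\rfloor$, matching the expected dimension on the Brill--Noether side; since a bijective étale morphism of varieties is an isomorphism, the theorem follows, and the smoothness and hyperk\"ahler structure on $\bn_C$ are inherited from $M_X(\v)$.
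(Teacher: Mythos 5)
There is a fatal computational error at the heart of your inverse construction. For $E^{\bullet} = \mathrm{cone}\bigl(\mathcal{O}_X^{\oplus(r+s)} \to \iota_*F\bigr)[-1]$ one has $v(\iota_*F) = (0, rH, kH^2 - \tfrac{r}{2}H^2)$, so $v(E^{\bullet}) = (r+s,\, -rH,\, \tfrac{r}{2}H^2 - kH^2 + r + s)$: this has rank $r+s$, not rank $r$, so $v(E^{\bullet}) \neq \v = (r, kH, s)$, and likewise $E^{\bullet}|_C \not\cong F$. (The cone construction \eqref{cone} does appear in the paper, but applied to $E \in M_X(\v)$, where it produces $K_E$ with $v(K_E) = \al = (s, -kH, r)$ --- used only for the tangent-space step, never to construct the inverse.) In the paper the bundle $E$ with $\Psi(E) = F$ is instead recovered as the destabilising subobject $F_1$ of $\iota_*F$ along the wall $\widetilde{\ell}$ (Proposition \ref{prop-wall}). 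Moreover, your plan to ``rule out every other wall'' between the large-volume chamber and the Brill--Noether wall is precisely what the paper does \emph{not} do and, as far as the techniques here go, cannot do: Proposition \ref{prop-wall} classifies only the walls for $\iota_*F$ lying above $\ell^*$, and walls below $\ell^*$ are left uncontrolled. Surjectivity is obtained by a different mechanism: the Harder--Narasimhan polygon $\p_{\iota_*F}$ is shown to lie in the triangle $\triangle o z_1 z_2$ (Lemma \ref{lem-polygon}), the estimate $h^0(X, \iota_*F) \leq \tfrac{1}{2}\chi(\iota_*F) + \tfrac{1}{2}\sum_i \lVert \overline{p_i p_{i-1}} \rVert$ from \eqref{bound} shows that $h^0 \geq r+s$ forces $\p_{\iota_*F}$ to coincide with the triangle (Proposition \ref{prop-polygon}), and only then does Proposition \ref{prop-wall}(d) give $F = E_1|_C$ with $v(E_1) = \v$. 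Without this polygon argument your construction gives no handle on an arbitrary $F \in \bn_C$.

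Several of your auxiliary vanishings are also asserted with invalid justifications, and each is in fact a wall-crossing result in the paper. The claim that $\v^2 \geq -2$ ``forces'' $H^1(X,E) = 0$ is false as stated: $h^0(E) = r+s$ (equivalently $h^1(E)=0$) is exactly the nontrivial Brill--Noether statement, proved in Step 1 of Proposition \ref{prop.restriction} by showing there is no wall for $E$ above $\ell_{\v}$ and invoking the bound $h^0(E) \leq h < r+s+1$; similarly $\Hom(\cO_X, E(-H)[1]) = 0$ comes from $\sigma_{b,w}$-stability of $\cO_X$ and $E(-H)[1]$ with equal slope on $\ell_{\v(-H)}$. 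Your ``Mukai-pairing count'' cannot give $\Ext^1(E, E'(-H)) = 0$, since $\ext^2(E, E'(-H)) = \hom(E'(-H), E)^*$ need not vanish ($\mu_H(E'(-H)) < \mu_H(E)$ permits such maps); the paper gets this vanishing from both objects being $\sigma_{b,w}$-stable of the same $\nu_{b,w}$-slope on $\widetilde{\ell}$, and gets injectivity more simply from the uniqueness of the HN filtration $E \hookrightarrow \iota_*E|_C \twoheadrightarrow E(-H)[1]$ below $\widetilde{\ell}$. Mehta--Ramanathan/Flenner-type theorems apply to general members of high multiples of $|H|$, whereas here $C$ is an \emph{arbitrary} (possibly singular) member of $|H|$ itself; the paper instead proves stability of $E|_C$ by the wall-crossing restriction argument of \cite{feyz:effective-restriction-theorem}. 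Finally, your ``natural identification'' $T_{[E|_C]}\bn_C \cong \Ext^1_X(E,E)$ is not automatic: the tangent space to $\bn_C$ is $\ker\bigl(k_1\colon \Ext^1(F,F) \to \Hom(H^0(F), H^1(F))\bigr)$, and surjectivity of $d\Psi$ requires $\Hom(K_E, E(-H)[1]) = 0$, which is Proposition \ref{prop.restriction}(c) --- again a wall-crossing statement, not a formal consequence of the restriction sequence.
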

The above Theorem says that for any vector bundle $F$ on the curve $C$ in the Brill-Noether locus $\bn_C(r, k(2g-2), r+s)$, there exists a unique vector bundle on the surface $X$ whose restriction to $C$ is isomorphic to $F$. In particular, we obtain the following: 
\begin{Cor}
	Any vector bundle $F \in \bn_C(r, k(2g-2), r+s)$ is stable (cannot be strictly semistable) and $\wedge^r F = \omega_C^{\otimes k}$ with $h^0(C, F) = r+s$. %Thus the tangent space of $\bn_C(r, k(2g-2), r+s)$ at $F$ is equal to 
	%\begin{equation*}
	%\ker \left(\Hom(F, F[1]) \xrightarrow{\ev} \Hom(H^0(F) , H^1(F)) \right). 
	%\end{equation*}
\end{Cor}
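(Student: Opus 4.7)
Plan. Theorem \ref{thm} identifies every $F\in\bn_C(r,k(2g-2),r+s)$ with the restriction $E|_C$ of a unique vector bundle $E\in M_X(\v)$, and I will read off the three conclusions from this identification. For the determinant: since $\gcd(r,k)=1$ the Mukai vector $\v=(r,kH,s)$ is primitive, and because $\Pic(X)=\Z\cdot H$ the first Chern class $kH$ of $E$ forces $\det E\cong\cO_X(kH)$. Adjunction on the K3 surface gives $\omega_C\cong\cO_X(C)|_C\cong\cO_X(H)|_C$, so
\[
\wedge^r F \ \cong\ \det(E|_C) \ \cong\ \cO_X(kH)|_C \ \cong\ \omega_C^{\otimes k}.
\]

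For the global sections count I would exploit the restriction sequence $0\to E(-H)\to E\to F\to 0$ and its long cohomology sequence. Primitivity of $\v$ together with $\Pic(X)=\Z\cdot H$ implies every $H$-Gieseker semistable sheaf with Mukai vector $\v$ is in fact $\mu_H$-stable; since $\mu_H(E(-H))=(k-r)H^2/r<0$ (using $k<r$) and $\mu_H(E^\vee)<0$, stability kills $H^0(E(-H))$ and, via Serre duality, $H^2(E)$. Mukai's formula reads $\chi(E)=r+s$, while the definition of $\bn_C$ already enforces $h^0(F)\geq r+s$. The sharp count $h^0(F)=r+s$ then amounts to $h^0(E)=r+s$ (equivalently $H^1(E)=0$) together with surjectivity of the restriction map $H^0(E)\to H^0(F)$; both should follow from the stability of $E$ combined with the numerical condition \eqref{assumption}, most likely reusing intermediate estimates from the proof of Theorem \ref{thm}.

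For the stability claim it suffices to show that $E|_C$ is already stable on $C$ for every $E\in M_X(\v)$: a stable bundle has a singleton S-equivalence class, so any semistable $F$ in $\bn_C$ whose S-class coincides with $[E|_C]$ must agree with $E|_C$ itself, hence be stable. Stability of $E|_C$ should come from a Mehta--Ramanathan / Flenner-type restriction theorem: $E$ is $\mu_H$-stable on $X$, the curve $C\in|H|$ is smooth of large degree $H^2=2g-2$ because $g\gg 0$, and the condition $\Pic(X)=\Z\cdot H$ combined with primitivity of $\v$ rules out destabilising subsheaves of $E|_C$ that could be lifted to $X$ by elementary transformations.

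The main obstacle I expect is the exact count $h^0(F)=r+s$: Theorem \ref{thm} and the definition of $\bn_C$ yield only the inequality $\geq$, so the reverse inequality requires a genuine cohomological input beyond the theorem statement --- essentially the vanishing $H^1(E)=0$, extracted from stability of $E$ and the Mukai-lattice arithmetic encoded in \eqref{assumption}.
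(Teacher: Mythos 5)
Your determinant computation is correct (and the paper leaves it implicit), and your reduction of the stability claim to ``$E|_C$ is stable for every $E\in M_X(\v)$'' is sound. But there is a genuine gap in how you propose to establish that stability: a Mehta--Ramanathan / Flenner / Langer-type restriction theorem cannot work here. Those theorems give (semi)stability of $E|_D$ only for a \emph{general} divisor $D\in |mH|$ with $m$ large compared to the discriminant: by \eqref{assumption} one computes $\Delta(E)=2rc_2-(r-1)c_1^2=\v^2+2r^2\in[2r^2-2,\,2r^2+2r-2)$, so Langer's effective bound requires roughly $m\gtrsim \frac{r-1}{r}\Delta(E)\approx 2r(r-1)>1$, whereas here $m=1$ is fixed and, worse, the Corollary concerns \emph{every} curve $C\in|H|$, including singular ones, not a general member. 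Raising $g$ does not help, since $\Delta(E)$ stays bounded while $m=1$ stays fixed; the genus enters the paper's argument through the wall geometry, not through a degree-versus-discriminant inequality. The paper instead gets stability of $E|_C$ from Proposition \ref{prop.restriction}(b): the triangle $E \hookrightarrow \iota_*E|_C \twoheadrightarrow E(-H)[1]$ in $\cA(b=0)$ together with the fact (proved by ruling out walls via Lemma \ref{lem-minimal} and Proposition \ref{prop.all bounds}) that $E$ and $E(-H)[1]$ are $\sigma_{b,w}$-stable of the same $\nu_{b,w}$-slope along $\widetilde\ell$, following \cite{feyz:effective-restriction-theorem} --- a restriction statement valid for all $C\in|H|$ precisely because the classical ones are not.

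The second gap is the one you flag yourself but do not close: the vanishings $h^1(E)=0$ (giving $h^0(E)=\chi(E)=r+s$) and $H^1(E(-H))=\Hom(\cO_X,E(-H)[1])=0$ (giving surjectivity of $H^0(E)\to H^0(E|_C)$) do \emph{not} follow from $\mu_H$-stability plus \eqref{assumption} by any soft argument; slope stability only kills $H^0(E(-H))$ and $H^2(E)$, exactly as you note. In the paper these are again wall-crossing statements inside the proof of Proposition \ref{prop.restriction}: $h^0(E)=r+s$ comes from semistability of $K_E[1]$ on the Brill--Noether wall $\ell_\v$ combined with the polygon bound \cite[Lemma 3.2]{feyz:mukai-program} (Step 1), and $\Hom(\cO_X,E(-H)[1])=0$ comes from $\cO_X$ and $E(-H)[1]$ being $\sigma_{b,w}$-stable of equal slope along $\ell_{\v(-H)}$ (Step 3). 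So the honest statement is that the Corollary is not a formal consequence of the bijectivity of $\Psi$: it uses Proposition \ref{prop.restriction}(b), i.e.\ the wall-crossing machinery itself, and your proposal substitutes for that machinery either an inapplicable classical theorem (stability) or a placeholder (the $H^1$ vanishings).
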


%Further questions: 
%1) Does this give a way to reconstruct K3 curves? 
%2) Can brill-noether loci of sheaves on K3 surfaces give hyperkahler varieties? 

\subsection*{Proof ideas} To prove Theorem \ref{thm}, we go through the following steps:  
\begin{enumerate*}
	\item Consider the embedding $\iota \colon C \hookrightarrow X$. The push-forward $\iota_* F$ for any vector bundle $F \in \bn_C(r, kH^2, r+s)$ is semistable in the large volume limit of the space of Bridgeland stability conditions on $X$. We move down and study walls for objects of class $\ch(\iota_*F)$ between the large volume limit and the Brill-Noether wall that the structure sheaf $\cO_X$ is making. The first wall is made by stable sheaves with Mukai vector $\v= (r, kH, s)$. We show $\iota_* F$ gets destabilised along this wall if and only if $F = E|_C$ for a sheaf $E \in M_X(\v)$.
	
	\item The assumption \eqref{assumption} implies that any stable coherent sheaf of Mukai vector $\v$ is locally-free. We show that there is no wall for objects with Mukai vector $\v$ between the large volume limit and the Brill-Noether wall. That's why we get $h^0(X, E) = r+s$ for any $E \in M_X(\v)$. Hence, there is a well-defined map   
	\begin{align*}
	\Psi \colon M_X(\v)  &\rightarrow \bn_C(r, kH^2, r+s)\\
	E & \mapsto E|_C. 
	\end{align*}
	Then a usual wall-crossing argument gives injectivity of $\Psi$ due to the uniqueness of Harder-Narasimhan filtration. 

	\item To prove $\Psi$ is surjective we apply the technique developed in \cite{feyz:mukai-program} which bounds the number of global sections of sheaves on $X$ in terms of the length of a convex polygon (which is the Harder-Narasimhan polygon in the Brill-Noether region). This implies that if $h^0(X, \iota_*F) \geq r+s$, then $F$ is the restriction of a vector bundle $E \in M_X(\v)$ to the curve $C$, so in particular, $\Psi$ is surjective.  	
	
	\item Finally, any vector bundle $E \in M_X(\v)$ is generated by global sections and the kernel 
	\begin{equation*}
	K_E \coloneqq \ker (\cO_X^{\oplus r+s} \xrightarrow{\text{ev}} E). 
	\end{equation*}
	is a slope-stable vector bundle. By applying a wall-crossing argument, we show that Hom$(K_E, E(-H)[1]) = 0$. Then a usual deformation theory argument implies that the derivative $d \Psi$ is surjetive, and so $\Psi$ is an isomorphism.  
	 
    %We show that there is no wall for $K_E$ between the Brill-Noether wall and the large volume limit, so $K_E$  
    
    %\item If there is a vector bundle $F \in \bn_C$ which is not in the image of $\Psi$, then it does not get destabilised along the first wall, so its HN polygon in the Brill-Noether region lies inside the triangle that $v$ and $v(-H)$ are making. Thus, we can show $h^0(F) < r+s$ which is not possible by the definition, so $\Psi$ is surjective.     

	%\item Finally, we show that for any vector bundle $E \in M_X(v)$, there is no wall for $K_E$ and $E(-H)[1]$ between the large volume limit and the stability condition $\sigma$ where they have the same phase. So they are $\sigma$-stable of the same phase which immediately implies Hom$(K_E, E(-H)[1]) = 0$. Then a usual deformation theory argument implies that $\Psi$ is an isomorphism. 
\end{enumerate*}

Note that in Theorem \ref{thm}, the assumption $g \gg 0$ is necessary for all the above steps. Remark \ref{Rem-big enough} gives a list of inequalities that $g$ must satisfy to get Theorem \ref{thm}.

%But during the proof, we make it a more explicit inequality. 

\subsection*{Outlook}
In this paper, we only work on curves on K3 surfaces of Picard rank one. More generally one can consider a polarised K3 surfaces $(X, H)$ of arbitrary Picard rank and pick a curve $C \in |H|$ of genus $g(C) \gg 0$. For any $r, k \in \mathbb{Z}^{>0}$, we define\footnote{One can replace the structure sheaf $\cO_C$ with any other vector bundle on $C$ and consider the corresponding Brill-Noether locus.} 
\begin{equation*}
h_{r, k} \coloneqq \max\left\{h^0(C, F) \colon \text{rank $r$-stable vector bundles $F$ on $C$ with $\wedge^rF = \omega_C^{\otimes k}$
%	of degree $k\big(2g(c) -2\big)$
}  
\right\}.
\end{equation*}
Consider the Brill-Noether locus $\bn_C^{\text{st}}(r,\,  \omega_C^{\otimes k} ,\, h_{r, k})$ of stable rank $r$-vector bundles $F$ on $C$ of fixed determinant $ \omega_C^{\otimes k}$ and having $h_{r, k}$ linearly independent global sections. Then the natural question is whether this Brill-Noether locus is a Hyperk\"{a}hler variety when $g(C)$ is high enough. I expect to answer this question in the future by applying a more general wall-crossing argument. 

% tangent space to this Brill-Noether locus at a point $[F]$ is equal to 
%	\begin{equation*}
%	\ker \left(\Hom(F, F[1]) \xrightarrow{\ev} \Hom(H^0(F) , H^1(F)) \right). 
%	\end{equation*}  
%The main question is whether there is a natural way to see a non-degenerate skew pairing 
%	\begin{equation*}
%	\ker (\ev) \times \ker (\ev) \rightarrow \mathbb{C}\ ?
%	\end{equation*}
%In the cases discussed in Theorem \ref{thm}, i.e. when $\Pic(X) = \Z.H$, we know for any bundle $F$ in the Brill-Noether locus, there is a unique lifted bundle $E$ on the K3 surface whose restriction to $C$ is our bundle $F$. That's why it is a Hyperk\"{a}hler variety

Note that the assumption $g(C) \gg 0$ is necessary as there are examples of the above Brill-Noether loci on K3 curves of genus $7$ \cite{mukai:non-abelian-brill-noether} and genus $12$ \cite{feyz-genus-12} which are smooth Fano varieties. A similar technique as in this paper can be applied to generalise Theorem \ref{thm} to 
\begin{enumerate}
	\item polarised K3 surfaces $(X, H)$ such that $H^2$ divides $H.D$ for all curve classes $D$ on $X$, and 
	\item curves $C \hookrightarrow X$ which are not necessarily in the linear system $|H|$.  
\end{enumerate}

\subsection*{Plan} In Section \ref{section.review}, we review Bridgeland stability conditions on the bounded derived category of coherent sheaves on a K3 surface $X$. Section \ref{section.wall-crossing} analyses walls for the push-forward of vector bundles in our Brill-Noether locus to the K3 surface $X$ and as a result, proves Theorem \ref{thm}. The computations for the location of walls are all postponed to Section \ref{section.location}.   

\subsection*{Acknowledgement}
I would like to thank Daniel Huybrechts who was the first person to encourage me to think about higher-dimensional Hyperk\"{a}hler varieties as Brill-Noether loci on K3 curves. I would also like to thank Enrico Arbarello, Arend Bayer, Ignacio Barros Reyes, Chunyi Li, Emanuele Macr\`{i}, Lenny Taelman, Richard Thomas, and Yukinobu Toda for helpful discussions and comments about this work. The author was supported by Marie Sk\l odowska--Curie individual fellowships 887438 and EPSRC postdoctoral fellowship EP/T018658/1.

\section{Review: Bridgeland stability conditions on K3 surfaces}\label{section.review}
%Let $(X,H)$ be a smooth polarized K3 surface over $\mathbb{C}$ with Pic$(X) = \Z H$.
In this section, we review the description of a slice of the \textit{space  of stability conditions} on the bounded derived category of coherent sheaves on a K3 surface given in \cite[Section 1-7]{bridgeland:K3-surfaces}.

Let $X$ be a projective scheme over $\mathbb{C}$ of dimension $\dim(X) \geq 1$, and let $H$ be an ample line bundle on $X$. The Hilbert polynomial $P(E, m)$ is given by $m \mapsto \chi(\cO_X, E \otimes \cO_X(mH))$. It can be uniquely written in the form
\begin{equation*}
P(E, m) = \sum_{i=0}^{\dim(E)}\al_i(E)\frac{m^i}{i!}
\end{equation*}
with integral coefficients $\al_i(E)$. The reduced Hilbert polynomial $p(E, m)$ of a coherent sheaf $E$ of dimension $d$ is defined by
\begin{equation*}
p(E, m) = \frac{P(E, m)}{\al_d(E)}.
\end{equation*}
\begin{Def}{\cite[Definition 1.2.4]{huybrechts:geometry-of-moduli-space-of-sheaves}}\label{def-stability}
	A coherent sheaf $E$ of dimension $d$ is (semi)stable if $E$ is pure and for any proper subsheaf $F \subset E$ one has $p(F, m) \,(\leq)\, p(E, m)$ for $m \gg 0$. Here $(\le)$ denotes $<$ for stability and $\le$ for semistability.  
\end{Def}

From now on, we always assume $(X,H)$ is a smooth polarized K3 surface over $\mathbb{C}$ with Pic$(X) = \Z \cdot H$. Denote the bounded derived category of coherent sheaves on $X$ by $\cD(X)$ and its Grothendieck group by $K(X):=K(\cD(X))$. Given an object $E \in \cD(X)$, we write $\ch(E) = (\ch_0(E), \ch_1(E), \ch_2(E)) \in H^*(X, \Z)$ for its Chern characters. The Mukai vector of an object $E \in \cD(X)$ is an element of the numerical Grothendieck group $\mathcal{N}(X) = \mathbb{Z} \oplus \text{NS}(X) \oplus \mathbb{Z} \cong \mathbb{Z}^3$ defined via
\begin{equation*}
v(E) \coloneqq \big(\ch_0(E),\, \ch_1(E),\, \ch_0(E) +\ch_2(E)\big) = \text{ch}(E)\sqrt{\text{td}(X)}\,. % \in H^*(X,\mathbb{Z}),
\end{equation*}
The Mukai bilinear form
\begin{equation*}
\left \langle v(E), v(E')\right \rangle = \ch_1(E)\ch_1(E') + \ch_0(E)\left(\ch_0(E') +\ch_2(E')\right) + \ch_0(E')\left(\ch_0(E) +\ch_2(E)\right)  
\end{equation*}
makes $\mathcal{N}(X)$ into a lattice of signature $(2,1)$. The Riemann-Roch theorem implies that this form is the negative of the Euler form, defined as
\begin{equation*}
\chi(E,E') = \sum_{i} (-1)^{i} \dim_{\mathbb{C}} \text{Hom}_X^{i}(E,E')  =  -\left \langle v(E), v(E')\right \rangle.
\end{equation*}
%If $E$ is a coherent sheaf of dimension $d= \dim(X)$, we define its rank as
%\begin{equation*}
%\rk(E) = \frac{\al_d(E)}{\al_d(\cO_X)}.
%\end{equation*}
%\begin{Def}
% A torsion-free coherent sheaf $E$ on $X$ is called $\mu_H$-(semi)stable if for all non-trivial subsheaves $F \subset E$ with $\rk(F) < \rk(E)$ we have
% \begin{equation*}
% \frac{\al_{\dim(X)-1}(F)}{\al_{\dim(X)}(F)}\, (\leq)\, \frac{\al_{\dim(X)-1}(E)}{\al_{\dim(X)}(E)}.
% \end{equation*}
%\end{Def}

%We write $H^*_{\mathrm{alg}}(X, \Z)$ for its algebraic part, in other words, the image of $\ch(-)$.
%(Let $\vv(E) \in H^*(X)$ be a class of the form
%$\vv(E) = (\rk(E), \ch_1(E), \ch_2(E))$.??).
%Given $\beta \in \R$, we defined the $\beta$-twisted Chern character by
% \[ \ch^\beta(E) := e^{-\beta H}.\ch(E) \in H^*_{\mathrm{alg}}(X, \R) \]

The slope of a coherent sheaf $E \in \Coh(X)$ is defined by
\[ \mu\_{H}(E) := \begin{cases}
\frac{H.\ch_1(E)}{H^2\ch_0(E)} & \text{if $\ch_0(E) > 0$} \\
+\infty & \text{if $\ch_0(E) = 0$}.
\end{cases}\]
This leads to the usual notion of $\mu_H$-stability. Associated to it every sheaf $E$ has a Harder-Narasimhan filtration. Its graded pieces have slopes whose maximum we denote by $\mu_H^+(E)$ and minimum by $\mu_H^-(E)$.

For any $b \in \mathbb{R}$, let $\cA(b)\subset\cD(X)$ denote the abelian category of complexes
\begin{equation}\label{Abdef}
\mathcal{A}(b)\ =\ \big\{E^{-1} \xrightarrow{\,d\,} E^0 \ \colon\ \mu_H^{+}(\ker d) \leq b \,,\  \mu_H^{-}(\cok d) > b \big\}.
\end{equation}
Then $\cA(b)$ is the heart of a bounded t-structure on $\cD(X)$ by \cite[Lemma 6.1]{bridgeland:K3-surfaces}. For any pair $(b,w) \in \R^2$, we define the group homomorphism $Z_{b,w} \colon K(X) \to \C$ by
\begin{equation} \label{eq:Zab}
Z_{b,w}(E) := -\ch_2(E) + w \ch_0(E)  + i \bigg(\frac{H\ch_1(E)}{H^2} -b \ch_0(E) \bigg).
\end{equation}
Define the function $\Gamma \colon \mathbb{R} \rightarrow \mathbb{R}$ as 
	\[ \Gamma(b) \coloneqq  \begin{cases}
 b^2 \left(\frac{H^2}{2}+1\right)-1 & \text{if $b\neq 0$} \\
0 & \text{if $b = 0$}.
\end{cases}\]
By abuse of notations, we also denote the graph of $\Gamma$ by \textit{curve $\Gamma$} (see Figure \ref{projetcion-fig}). We define
\begin{equation*}
U \coloneqq \left\{(b,w) \in \mathbb{R}^2 \colon w > \Gamma(b)  \right\}.
\end{equation*}
In figures, we will plot the $(b,w)$-plane simultaneously with the image of the projection map 
	\begin{eqnarray*}
		\Pi\colon\ K(X) \setminus \big\{E \colon \ch_0(E) = 0\big\}\! &\longrightarrow& \R^2, \\
		E &\ensuremath{\shortmid\joinrel\relbar\joinrel\rightarrow}& \!\!\bigg(\frac{\ch_1(E).H}{\ch_0(E)H^2}\,,\, \frac{\ch_2(E)}{\ch_0(E)}\bigg).
	\end{eqnarray*}
	
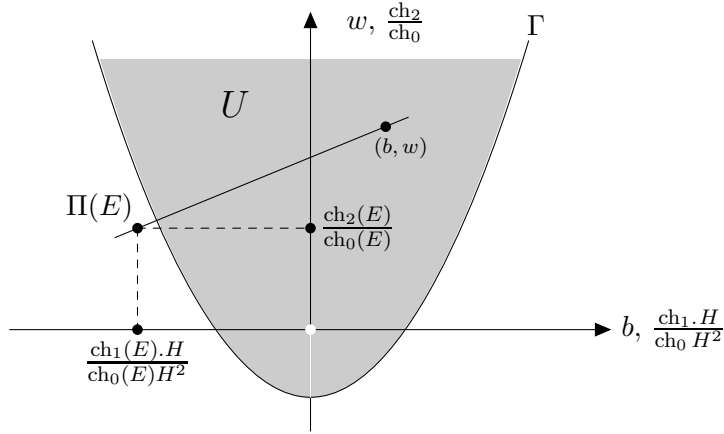
\begin{figure}[h]
	\begin{centering}
		\definecolor{zzttqq}{rgb}{0.27,0.27,0.27}
		\definecolor{qqqqff}{rgb}{0.33,0.33,0.33}
		\definecolor{uququq}{rgb}{0.25,0.25,0.25}
		\definecolor{xdxdff}{rgb}{0.66,0.66,0.66}
		
		\begin{tikzpicture}[line cap=round,line join=round,>=triangle 45,x=1.0cm,y=0.9cm]
		
		%\draw[->,color=black] (-4,0) -- (4,0);
		\draw  (4, 0) node [right ] {$b,\,\frac{\ch_1\!.\;H}{\ch_0H^2}$};

		%\fill[gray!40!white, draw=black] (-3,4) parabola (3,4);

		\fill [fill=gray!40!white] (0,-1) parabola (2.8,4) parabola [bend at end] (-2.8,4) parabola [bend at end] (0,-1);
		
		\draw[->,color=black] (-4,0) -- (4,0);
		\draw  (0,-1) parabola (2.9,4.27); 
		\draw  (0,-1) parabola (-2.9,4.27); 
		\draw  (3 , 4.2) node [above] {$\Gamma$};

		\draw[->,color=black] (0,-1.5) -- (0,4.7);
		\draw  (1, 4.1) node [above ] {$w,\,\frac{\ch_2}{\ch_0}$};
		
		\draw [dashed, color=black] (-2.3,1.5) -- (-2.3,0);
		\draw [dashed, color=black] (-2.3, 1.5) -- (0, 1.5);
		\draw [color=black] (-2.6, 1.36) -- (1.3, 3.14);
		\draw [color=white] (0, 0) -- (0, -1);
		
		\draw  (-2.8, 1.8) node {$\Pi(E)$};
		\draw  (-1, 3) node [above] {\Large{$U$}};
		\draw  (0, 1.5) node [right] {$\frac{\ch_2(E)}{\ch_0(E)}$};
		\draw  (-2.3 , 0) node [below] {$\frac{\ch_1(E).H}{\ch_0(E)H^2}$};
		\begin{scriptsize}
		\fill (0, 1.5) circle (2pt);
		\fill[color=white] (0, 0) circle (2pt);
		\fill (-2.3,0) circle (2pt);
		\fill (-2.3,1.5) circle (2pt);
		\fill (1,3) circle (2pt);
		\draw  (1.2, 2.96) node [below] {$(b,w)$};
		
		\end{scriptsize}
		
		\end{tikzpicture}
		
		\caption{$(b,w)$-plane and the projection $\Pi(E)$}
		
		\label{projetcion-fig}
		
	\end{centering}
\end{figure}

Consider the slope function
\[ \nu\_{{b,w}}\colon \cA(b) \to \R \cup \{+\infty\}, \quad \nu\_{b,w} (E) \coloneqq \begin{cases}
-\frac{\text{Re}[Z_{b,w}(E)]}{\text{Im}[Z_{b,w}(E)]} & \text{if $\text{Im}[Z_{b,w}(E)] > 0$} \\
+\infty & \text{if $\text{Im}[Z_{b,w}(E)] = 0$.}
\end{cases}
\]
This defines our notion of stability in $\cA(b)$:
\begin{Def}
	Fix $w> \Gamma(b)$. We say $E\in\cD(X)$ is (semi)stable with respect to the pair $\sigma_{b,w} \coloneqq \left(\cA(b),\, Z_{b,w} \right)$ if and only if
	\begin{itemize}
		\item $E[k]\in\cA(b)$ for some $k\in\Z$, and
		\item $\nu\_{b,w}(F)\,(\le)\,\nu\_{b,w}\big(E[k]/F\big)$ for all non-trivial subobjects $F\hookrightarrow E[k]$ in $\cA(b)$.
	\end{itemize}
\end{Def}
Let $E$ be a semistable coherent sheaf, or more generally a $\sigma_{b,w}$-semistable object for some $(b,w) \in U$. 
By \cite[Remark 2.3(a)]{feyz-li:clifford-indices}\footnote{Note that the curve $\Gamma$ defined in this paper lies above the curve $\Gamma$ considered in \cite{feyz-li:clifford-indices}.}, the projection $\Pi(E)$ does not lie in $U$. Thus the same argument as in \cite[Lemma 6.2]{bridgeland:K3-surfaces} shows that the pair $\sigma_{b,w} = \left(\cA(b),\, Z_{b,w} \right)$ is a Bridgeland stability condition on $\cD(X)$, see \cite[Definition 1.1 \& Proposition 5.3]{bridgeland:stability-condition-on-triangulated-category}\footnote{Up to the action of $\widetilde{\text{GL}}^{+}(2;\mathbb{R})$, the stability conditions $\sigma_{b,w}$ are the same as the stability conditions defined in \cite[section 6]{bridgeland:K3-surfaces}.}. This in particular implies that every object $E \in \cA(b)$ admits a \textit{Harder--Narasimhan filtration} which is a finite sequence of objects in $\cA(b)$:
\[0=E_0\subset E_1\subset E_2\subset\dots\subset E_k=E\]
whose factors $E_i/E_{i-1}$ are $\sigma_{b,w}$-semistable and $\nu_{b,w}(E_1)>\nu_{b,w}(E_2)>\dots>\nu_{b,w}(E/E_{k-1})$. We denote $\nu_{b,w}^+(E)\coloneqq \nu_{b,w}(E_1)$ and $\nu_{b,w}^-(E) \coloneqq  \nu_{b,w}(E_k)$. Summarizing, we have the following: 
\begin{Thm}[{\cite[Section 1]{bridgeland:K3-surfaces}}] \label{thm:stabconstr}
	For any pair $(b,w) \in U$, the pair $\sigma_{b,w} = \left(\cA(b), Z_{b,w}\right)$
	defines a stability condition on $\cD(X)$. Moreover, the map from $U$ to the space of stability conditions $\Stab(X)$ on $\cD(X)$ given by $(b,w) \mapsto \sigma_{b,w}$ is continuous.
\end{Thm}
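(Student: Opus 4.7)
The plan is to follow Bridgeland's original construction from \cite[Section 6]{bridgeland:K3-surfaces}, specialised to the slice of stability conditions parametrised by $U$. The heart $\cA(b)$ has already been shown to arise from tilting $\Coh(X)$ at slope $b$, so the remaining tasks are: (i) verify that $Z_{b,w}$ is a stability function on $\cA(b)$; (ii) establish the Harder--Narasimhan property together with the support property; and (iii) check continuity of $(b,w) \mapsto \sigma_{b,w}$.

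The crux of the argument is the positivity step, i.e.\ showing $Z_{b,w}(E) \in \mathbb{H} \cup \mathbb{R}_{<0}$ for every nonzero $E \in \cA(b)$. Non-negativity of $\text{Im}\, Z_{b,w}(E)$ is immediate from the definition \eqref{Abdef} of $\cA(b)$, since the cohomology sheaves of $E$ with respect to the standard t-structure have slopes lying on the prescribed sides of $b$. When the imaginary part vanishes, standard reductions force $E$ to be either a zero-dimensional torsion sheaf (in which case $\text{Re}\, Z_{b,w}(E) = -\ch_2(E)$ is manifestly negative) or a $\mu_H$-semistable sheaf with $\mu_H(E) = b$. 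In the semistable case I would invoke the refined Bogomolov--Gieseker inequality on a K3 surface, recorded in \cite[Remark 2.3(a)]{feyz-li:clifford-indices}: the projection $\Pi(E)$ of any such sheaf lies outside $U$, i.e.\ $\ch_2(E)/\ch_0(E) \leq \Gamma(b)$. Combined with $w > \Gamma(b)$ this gives $\text{Re}\, Z_{b,w}(E) = -\ch_2(E) + w\,\ch_0(E) > 0$, as required.

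Once $(Z_{b,w}, \cA(b))$ is a stability function, existence of Harder--Narasimhan filtrations follows from the standard Noetherianity argument of \cite[Proposition 7.1]{bridgeland:K3-surfaces}. To upgrade to a full Bridgeland stability condition in the sense of \cite{bridgeland:stability-condition-on-triangulated-category} one needs the support property; this too follows from $\Pi(E) \notin U$ applied to all $\sigma_{b,w}$-semistable objects, yielding a uniform quadratic lower bound on $|Z_{b,w}(E)|$ in terms of the Mukai form on $\mathcal{N}(X)$. Continuity of $(b,w) \mapsto \sigma_{b,w}$ then follows from the manifest continuity of $Z_{b,w}$ in $(b,w)$ together with the fact that this support bound holds locally uniformly on $U$. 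The genuine obstacle is the boundary positivity step: this is where the K3 geometry enters via the sharp inequality $\langle v(E), v(E) \rangle \geq -2$ for semistable Mukai vectors; everything else is a mechanical transcription of Bridgeland's argument, made possible precisely because the curve $\Gamma$ used here lies strictly above the corresponding curve in Bridgeland's original paper.
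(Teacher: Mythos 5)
Your proposal follows essentially the same route as the paper, which likewise reduces everything to the positivity statement that $\Pi(E)\notin U$ for semistable objects (citing \cite[Remark 2.3(a)]{feyz-li:clifford-indices}) and then invokes Bridgeland's argument from \cite[Lemma 6.2]{bridgeland:K3-surfaces} together with his results for the Harder--Narasimhan, support, and continuity statements. One cosmetic point: in the discriminant-zero case the slope-$b$ semistable sheaf $F$ lies in $\cA(b)$ only after a shift, so your inequality $-\ch_2(F)+w\ch_0(F)>0$ gives $\operatorname{Re}Z_{b,w}(F[1])<0$ for the heart object $F[1]$, which is the sign actually required.
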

The second part of Theorem \ref{thm:stabconstr} implies that the two-dimensional family of stability conditions $\sigma_{b,w}$ satisfies wall-crossing as $b$ and $w$ vary, see for instance \cite[Proposition 4.1]{feyz:thomas-noether-loci} or \cite[Proposition 2.4]{feyz-li:clifford-indices}.
\begin{Prop}[\textbf{Wall and chamber structure}]\label{locally finite set of walls}
	Fix a non-zero object $E \in \mathcal{D}(X)$. There exists a collection of line segments (walls) $\cW_E^i$ in $U$ (called ``\emph{walls}") which are locally finite and satisfy 
	\begin{itemize*}
		\item[\emph{(a)}] The extension of each line segment $\cW_E^i$ passes through the point $\Pi(E)$ if $\ch_0(E) \neq 0$, or has fixed slope $\frac{\ch_2(E)H^2}{\ch_1(E).H}$ if $\ch_0(E) = 0$. 
		
		\item[\emph{(b)}] An endpoint of the segment $\cW_E^i$ is either on the curve $\Gamma$ or on the line segment through $(0, 0)$ to $(0,-1)$.% for some $n\in\Z$.% (see the remark below for more details); 
		
		\item[\emph{(c)}] The $\sigma\_{b,w}$-(semi)stability of $E$ is unchanged as $(b,w)$ varies within any connected component (called a ``\emph{chamber}") of $U \setminus \bigcup_{i \in I} \cW_E^i $.
		
		\item[\emph{(d)}] For any wall $\cW_E^i$ there is $k_i \in \mathbb{Z}$ and a map $f\colon F\to E[k_i]$ in $\cD(X)$ such that
		\begin{itemize}
			\item for any $(b,w) \in \cW_E^i$, the objects $E[k_i],\,F$ lies in the heart $\cA(b)$,
			\item $E[k_i]$ is $\nu\_{b,w}$-semistable with $\nu\_{b,w}(E)=\nu\_{b,w}(F)=\,\mathrm{slope}\,(\cW_E^i)$ constant on $\cW_E^i$, and
			\item $f$ is an injection $F\subset E[k_i]$ in $\cA(b)$ which strictly destabilises $E[k_i]$ for $(b,w)$ in one of the two chambers adjacent to the wall $\cW_E^i$.
			
		\end{itemize} 
	\end{itemize*} 
	\begin{figure} [h]
		\begin{centering}
			
			\begin{tikzpicture}[line cap=round,line join=round,>=triangle 45,x=1.0cm,y=1.0cm]

			\fill [fill=gray!30!white] (-0.5,-.5) parabola (1.47, 3.03) parabola [bend at end] (-2.47,3.03) parabola [bend at end] (-0.5,-.5);
			
			\fill [fill=gray!30!white] (-8,-.5) parabola (-6.03, 3.03) parabola [bend at end] (-9.97,3.03) parabola [bend at end] (-8,-.5);

			\draw[->,color=black] (-10.5,0) -- (-5.5,0);
			\draw[->,color=black] (-3,0) -- (2,0);
			\draw[->,color=black] (-8,-1) -- (-8,3.5);
			\draw[->,color=black] (-0.5,-1) -- (-0.5,3.5);

			\draw [] (-0.5,-.5) parabola (1.5,3.12); 
			\draw [] (-0.5,-.5) parabola (-2.5,3.12); 
			\draw [] (-8,-.5) parabola (-10,3.12); 
			\draw [] (-8,-.5) parabola (-6,3.12);

			\draw[color=black, dashed] (-10.5,2.8) -- (-6,1);
			\draw[color=black, dashed] (-10.5,1.8) -- (-6.5,0.2);
			%\draw[color=black, dashed] (-10,3.6) -- (-6,2);

			%\draw[color=black] (-8.78,3.12) -- (-6.32,2.13);
			\draw[color=black,semithick] (-9.8,2.52) -- (-6.6,1.24);
			\draw[color=black,semithick] (-9.45,1.38) -- (-7.,.4);
			
			\draw (-10.5,1.8) node [left] {$\cW_E^2$};
			\draw (-10.5,2.8) node [left] {$\cW_E^1$};
			%\draw (-10,3.6) node [left] {$\ell_2$};

			\draw (.8,3.5) node [right] {\large{$\ch_0(E) \neq 0$}};
			\draw (-6.8,3.5) node [right] {\large{$\ch_0(E) = 0$}};
			\draw (-5.5,0) node [below] {$b, \frac{\ch_1.H}{H^2\ch_0}$};
			\draw (-8,3.5) node [above] {$w, \frac{\ch_2}{\ch_0}$};
			
			\draw (2,0) node [below] {$b, \frac{\ch_1.H}{H^2\ch_0}$};
			\draw (-0.5,3.5) node [above] {$w, \frac{\ch_2}{\ch_0}$};
			
			\draw (-7.3,2.5) node [right] {\Large{$U$}};
			\draw (0.2,2.5) node [right] {\Large{$U$}};

			\draw (1.5, 1.2) node [right] {$\Pi(E)$};
			
			%\draw[color=black] (1, 1.2) -- (-3.4,2.8);
			\draw[color=black, dashed] (1.5, 1.2) -- (-2.9,2.8);
			%\draw[color=black, dashed] (1, 1.2) -- (-3,1.6);
			\draw[color=black, dashed] (1.5, 1.2) -- (-2.2, .7);
			
			\draw (-2.9,2.8) node [left] {$\cW_E^1$};
			%\draw (-3,1.6) node [left] {$\ell_2$};
			\draw (-2.2, .7) node [left] {$\cW_E^2$};

			\draw[color=black, semithick] (-2.3 ,2.58) -- (.88,1.423);
			
			\draw[color=black, semithick] (-1.7,.764) -- (0.8,1.108);
			
			\draw[color=white, thick] (-.5, -.5) -- (-.5, 0);
				\draw[color=white, thick] (-8, -.5) -- (-8, 0);
			
			\begin{scriptsize}
			
			%	
			%		\fill [color=black] (-6.32,2.13) circle (1.1pt);
			%		\fill [color=black] (-8.78,3.12) circle (1.1pt);
			%		
			%        \fill [color=black] (-6.7,1.28) circle (1.1pt);
			%		\fill [color=black] (-9.8,2.53) circle (1.1pt);
			%		
			%		\fill [color=black] (-7.22,.5) circle (1.1pt);
			%		\fill [color=black] (-9.3,1.32) circle (1.1pt);

			\fill [color=black] (1.5,1.2) circle (2pt);
			
					\fill [color=white] (-0.5,0) circle (2pt);
					\fill [color=white] (-8,0) circle (2pt);
			%		\fill [color=black] (-2.4,1.53) circle (1.1pt);
			%		\fill [color=black] (-2,.8) circle (1.1pt);
			%		
			%		\fill [color=black] (.38 ,1.43) circle (1.1pt);
			%		\fill [color=black] (.28,1.26) circle (1.1pt);
			%		\fill [color=black] (.2,1.09) circle (1.1pt);

			%\fill [color=black] (1,1.65) circle (1.1pt);
			
			%		\fill [color=black] (-.7 ,2.85) circle (1.1pt);
			%		\fill [color=black] (.7,2.85) circle (1.1pt);
			%		
			%		
			%		\fill [color=black] (1.7,1.5) circle (1.1pt);
			%		\fill [color=black] (-.5,2) circle (1.1pt);
			%		
			%		
			%		\fill [color=black] (-7.7 ,2.85) circle (1.1pt);
			%		\fill [color=black] (-6.3,2.85) circle (1.1pt);
			%		\fill [color=black] (-7.7 ,.8) circle (1.1pt);
			%		\fill [color=black] (-6.3,.8) circle (1.1pt);
			%		
			%		
			%\fill [color=white] (-7,1.9) circle (1.1pt);
			%\fill [color=black] (-7,1.2) circle (1.1pt);
			%\fill [color=black] (-7.5,2) circle (1.1pt);
			
			%\fill [color=black] (-6,1.65) circle (1.1pt); 
			%\fill [color=black] (-6,.95) circle (1.1pt);
			%\fill [color=black] (-8.5,1.52) circle (1.1pt);
			%\fill [color=black] (-5.3,.8) circle (1.1pt);
			%\fill [color=black] (-8,1.4) circle (1.1pt); 
			%\fill [color=black] (-8,2.1) circle (1.1pt);

			\end{scriptsize}
			
			\end{tikzpicture}
			
			\caption{Walls $\cW_E^i$ for $E$.}
			
			\label{wall.figure}
			
		\end{centering}
		
	\end{figure}
\end{Prop}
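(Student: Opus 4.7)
The plan is to carry out the standard wall-crossing analysis for the two-parameter slice $\sigma_{b,w}$, with the support property for these stability conditions --- namely, that $\Pi(F)\notin U$ for any $\sigma_{b,w}$-semistable $F$ of nonzero rank, as invoked in \cite[Remark 2.3(a)]{feyz-li:clifford-indices} --- serving as the central technical input.

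First I would derive the wall equation and deduce (a) and (b). A wall for $E$ arises from a map $F\to E[k]$ in $\cD(X)$ which becomes a subobject inclusion in $\cA(b)$ with $\nu_{b,w}(F)=\nu_{b,w}(E[k])$, equivalently
\[
\text{Re}\,Z_{b,w}(F)\cdot\text{Im}\,Z_{b,w}(E) \;=\; \text{Re}\,Z_{b,w}(E)\cdot\text{Im}\,Z_{b,w}(F).
\]
Substituting \eqref{eq:Zab} and expanding, the bilinear $bw$ cross-terms cancel, leaving an affine equation in $(b,w)$. Direct substitution then shows $\Pi(E)$ solves this equation when $\ch_0(E)\neq 0$, while when $\ch_0(E)=0$ the resulting line has slope $\ch_2(E)H^2/(H.\ch_1(E))$; this gives (a). For (b), each wall is extended to a maximal line segment in $U$, so its endpoints sit on $\partial U$. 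Since $\Gamma$ jumps at $b=0$ from the limit value $-1$ to $\Gamma(0)=0$, this boundary decomposes as the parabolic arc $\{w=\Gamma(b):b\neq 0\}$ together with the vertical segment $\{0\}\times[-1,0]$.

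Next, I would address local finiteness and properties (c) and (d). Combining the wall equation (a line in the $(b,w)$-plane) with the constraint $\Pi(F)\notin U$ from the support property, and using discreteness of the Mukai lattice, only finitely many Mukai vectors $v(F)$ can contribute walls meeting any prescribed compact subregion of $U$. Standard continuity of $Z_{b,w}$ in $(b,w)$ then yields (c). For (d), on each wall the shift $k_i\in\Z$ is forced by the requirement $E[k_i]\in\cA(b)$, and the destabilising inclusion $F\hookrightarrow E[k_i]$ supplied by the local Harder--Narasimhan filtration of $E[k_i]$ is $\sigma_{b,w}$-semistable of matching slope; perturbing $(b,w)$ transversally across the wall turns the slope equality into a strict inequality on one of the two adjacent chambers, strictly destabilising $E[k_i]$ there.

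The main obstacle is the local finiteness step. One must rule out infinite accumulation of walls in compact regions, and this is where the support property does the real work: the constraint $\Pi(F)\notin U$ forces the lattice points $v(F)$ lying on a fixed wall line to be sufficiently spread out that only finitely many can intersect any compact subregion of $U$. This is essentially the Bogomolov-type inequality implicit in the signature-$(2,1)$ geometry of the Mukai lattice on the K3 surface.
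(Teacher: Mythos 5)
You should first note what you are being compared against: the paper does not prove this proposition at all --- it is quoted as a known wall-and-chamber statement, with the proof deferred to \cite[Proposition 4.1]{feyz:thomas-noether-loci} and \cite[Proposition 2.4]{feyz-li:clifford-indices}. Your outline follows the same standard route as those references, and most of it is sound: the equality $\mathrm{Re}\,Z_{b,w}(F)\cdot\mathrm{Im}\,Z_{b,w}(E)=\mathrm{Re}\,Z_{b,w}(E)\cdot\mathrm{Im}\,Z_{b,w}(F)$ is indeed affine in $(b,w)$ because the $bw$ cross-terms carry the same coefficient $-\ch_0(E)\ch_0(F)$ on both sides, direct substitution of $\Pi(E)$ verifies (a), your decomposition of $\partial U$ into the parabolic arcs and the vertical slit $\{0\}\times[-1,0]$ (forced by $\Gamma(0)=0$ versus $\lim_{b\to 0}\Gamma(b)=-1$) is exactly why (b) is phrased as it is, and the HN-filtration-plus-perturbation argument for (d) is the right one.

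The genuine gap is in the local finiteness step, which you correctly identify as the crux but then argue incorrectly. The constraint $\Pi(F)\notin U$ together with discreteness of $\mathcal{N}(X)$ does \emph{not} bound the set of candidate walls through a compact $K\subset U$: rational points are dense in the complement of $U$, so infinitely many classes $v(F)$ with $\Pi(F)\notin U$ determine lines through $\Pi(E)$ that meet $K$, and your phrase about lattice points ``on a fixed wall line'' being spread out conflates two different things --- once the wall line is fixed the wall is determined, and finiteness is a statement about the set of wall \emph{lines} meeting $K$, not about points on one line. What is missing is the constraint coming from $F$ being a subobject of $E[k]$ in $\cA(b)$ at a wall point: this traps $0\le \mathrm{Im}\,Z_{b,w}(F)\le \mathrm{Im}\,Z_{b,w}(E)$, and since $F$ and $E[k]/F$ are $\sigma_{b,w}$-semistable of the same slope, their stable Jordan--H\"older factors all satisfy the support property (in particular have Mukai square $\ge -2$). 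Bounded imaginary part and bounded slope over $K$ bound the central charges of these factors, and the $(-2)$-bound on a lattice of signature $(2,1)$ then leaves only finitely many possible classes, as in \cite[Lemma 9.3]{bridgeland:K3-surfaces} or the cited \cite[Proposition 2.4]{feyz-li:clifford-indices}. As written, your proposal would not rule out accumulation of walls; with this subobject/mass constraint inserted, it becomes the standard proof.
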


\section{Wall-crossing}\label{section.wall-crossing}
Let $(X,H)$ be a smooth polarized K3 surface over $\mathbb{C}$ with Pic$(X) = \Z \cdot H$, and let $C \in |H|$ be any curve of genus $g = \frac{1}{2}H^2 +1$. 
The Brill-Noether loci of vector bundles on $C$ when genus $g$ is low have been highly studied, see e.g. \cite{mukai-curves-k3surface-genus-11,mukai:curve-k3surface-genus-less-than-10,mukai:new-development-in-the-theory-of-fano-threefold,mukai:non-abelian-brill-noether,mukai:vector-bundles-and-brill-noether}. But in this section, we analyse these loci when genus $g$ is high.

As in the Introduction, fix $(r, k) \in \mathbb{Z}_{>0} \oplus \mathbb{Z}_{>0}$ such that $\gcd(r, k) =1$ and $k < r$. Then we take $g \gg 0$ and consider polarised K3 surfaces $(X, H)$ of genus $g = \frac{1}{2}H^2+1$. Consider the Mukai vector $\v \coloneqq (r, kH, s)$ such that $\gcd(s, k) =1$ and the assumption \eqref{assumption} holds:
\begin{equation*}%\label{assumption}
-2 \leq \v^2= k^2H^2 -2rs < -2 +2r\,.
\end{equation*}  
	This implies
\begin{equation}\label{bound for s}
-1+\frac{1}{r} +\frac{k^2}{2r}H^2< \  s \ \leq \frac{1}{r}+\frac{k^2}{2r}H^2\, . 
\end{equation}
We set Mukai vector $\al \coloneqq (s, -kH, r)$, so 
\begin{equation*}
\Pi(\v) = \left(\frac{k}{r},\, \frac{s-r}{r} \right)\ ,  \qquad 	\Pi(\al) = %\Pi(s, -kH, r) = 
\left(-\frac{k}{s}\ ,\ \frac{r-s}{s}   \right),
\end{equation*}
and
	\begin{equation*}
	\Pi(\v(-H)) = \Pi\left(r, (k-r)H, s-kH^2+ \frac{r}{2} H^2 \right) = \left(\frac{k-r}{r} \ ,\ \frac{s-r}{r} -\frac{k}{r}H^2+ \frac{1}{2}H^2  \right). 
	\end{equation*}
	
\begin{Def}\label{def-ellstar}
	Let $\ell^*$ be the lowest line of slope $H^2(\frac{k}{r} -\frac{1}{2})$ which intersects the curve $\Gamma$ at two points with $b$-values $b_1^*<b_2^*$ satisfying
	\begin{equation}\label{cond.for ellstar}
	\max \left\{b_1^*- \frac{k-r}{r} \, , \  \frac{k}{r} - b_2^*  \right\} \,\leq\, 
	\frac{1}{r^2(r+1)} %\min\{k,\, r-k\}. 
	\end{equation}
	% the following conditions:  
	%\begin{enumerate*}
	%	\item the point $(0, 0)$ lies below or on $\ell^*$, 
	%	\item it intersects $\Gamma$ at two points with $b$-values $b_1^*< 0< b_2^*$ such that
	%	\begin{equation}\label{cond.1.for ellstar}
	%	b_2^* -b_1^*\, \geq\, 1- \frac{1}{r(r-1)}\,,
	%	\end{equation}	
	%	and 
%		%\item If $\epsilon_2 = \frac{k}{r} - b_2^*$, then 
%		\begin{equation}\label{cond.2.for ellstar}
%		\frac{k}{r} - b_2^* \,\leq\, 
%		%\epsilon_2 -\epsilon_2^2 < 
%		\frac{1}{r^2(r-1)} \min\{k,\, r-k\}.  %\qquad \text{and} \qquad \epsilon_2 - \epsilon_2^2 < \frac{r-k}{r^2(r-1)}. 
%		\end{equation}
%	\end{enumerate*} 
\end{Def}
\begin{figure}[h]
	\begin{centering}
		\definecolor{zzttqq}{rgb}{0.27,0.27,0.27}
		\definecolor{qqqqff}{rgb}{0.33,0.33,0.33}
		\definecolor{uququq}{rgb}{0.25,0.25,0.25}
		\definecolor{xdxdff}{rgb}{0.66,0.66,0.66}
		
		\begin{tikzpicture}[line cap=round,line join=round,>=triangle 45,x=1.0cm,y=1.0cm]
		
		\draw[->,color=black] (-8,0) -- (7.5,0);
		\draw  (7.5, 0) node [right ] {$b,\,\frac{\ch_1.H}{\ch_0H^2}$};

		%\fill[gray!40!white, draw=black] (-3,4) parabola (3,4);

		%\fill [fill=gray!40!white] (0,0) parabola (3,4) parabola [bend at end] (-3,4) parabola [bend at end] (0,0);
		
		\draw  (0,-1.5) [color=blue, thick]parabola (6,7.5); 
		\draw  (0,-1.5) [color=blue, thick] parabola (-6,7.5); 
		\draw (5.55,7.5) node [above] {$\Gamma$};
		\draw [color=blue, thick] (0, 0)--(0, -1.5);
		
		\draw [thick] (7,3.6)--(-8,7.1);
		\draw [thick] (6,1.2)--(-6, 4.2);
		\draw [thick] (-3.5, -2.6) -- (7, 5.1);
		
		\draw [thick] (0, 0) -- (-6.8, 7.5);
		
		\draw [thick] (-1, -2.5) -- (-6., 6.6);

		\draw[->,color=black] (0,-3) -- (0,8.5);
		\draw  (0, 8.5) node [above ] {$w,\,\frac{\ch_2}{\ch_0}$};

		%\draw [dashed, color=black] (2.3,1.5) -- (2.3,0);
		%\draw [dashed, color=black] (4.3, 3.1) -- (4.3, 0);
		\draw  (4.3, 3.) node [right] {$b_2^{\v}$};
		%\draw [dashed, color=black] (4.75, 4.1) -- (4.75, 0);
		\draw  (4.6, 4.1) node [above] {$\tilde{b}_2$};
		\draw  (7, 5.1) node [right] {\Large{$\ell_{\v}$}}; 
		\draw  (6,1.2) node [right] {\Large{$\ell^*$}};
		\draw  (7,3.6) node [right] {\Large{$\tilde{\ell}$}};
		
		%\draw [dashed, color=black] (3.63,1.8) -- (3.63, 0);
		\draw  (3.74, 1.8) node [below] {$b_2^*$};
		
		\draw  (-1.7, -1.28) node [left] {$\Pi(\al)$};
		\draw  (-1, -2.5) node [below] {\Large{$\ell_{\al}$}};
		\draw  (5.45,3.95) node [above] {$\Pi(\v)$};
		
		\draw  (-6.8, 7.5) node [above] {\Large{$\ell_{\v(-H)}$}};
		
		\draw  (-1.3,-1.02) node [right] {$b_1^{\v}$};
		\draw  (-2.25, -.25) node [right] {$b_2^{\al}$};
		\draw  (-4.8,3.85) node [below] {$b_1^*$};
		\draw  (-5.1, 5) node [left] {$b_1^{\al}$};
		\draw  (-5.5,6.08) node [right] {$b_1^{\v(-H)}$};
		\draw  (-5.68,6.8) node [right] {$\tilde{b}_1$};
		\draw  (-5.9, 6.4) node [left] {$\Pi(\v(-H))$};
		
		\draw [color=blue, thick] (0, 0)--(0, -1.5);
		\begin{scriptsize}
		\fill (-1.7, -1.28) circle (2pt);
		\fill (5.45,3.95) circle (2pt);
		\fill (-6., 6.6) circle (2pt);
		%\fill (2.35,2) circle (2pt);
		\fill [color=blue] (0, 0) circle (2pt);
		\fill (4.3,3.1) circle (1.5pt);
		\fill (4.75,4.1) circle (1.5pt);
		\fill (3.63,1.8) circle (1.5pt);
		
		\fill (-1.34,-1.02) circle (1.5pt);
		\fill (-2.25, -.25) circle (1.5pt);
		\fill (-4.65,3.85) circle (1.5pt);
		\fill (-5.1, 5) circle (1.5pt);
		\fill (-5.5,6.08) circle (1.5pt);
		\fill (-5.68,6.55) circle (1.5pt);
		
		\end{scriptsize}
		
		\end{tikzpicture}
		
		\caption{The lines $\ell^*$, $\ell_{\v}$, $\ell_{\v(-H)}$, $\ell_{\alpha}$ and $\widetilde{\ell}$.
			%$(b,w)$-plane and the projection under $\Pi$
		}
		
		\label{projetcion}
		
	\end{centering}
\end{figure}
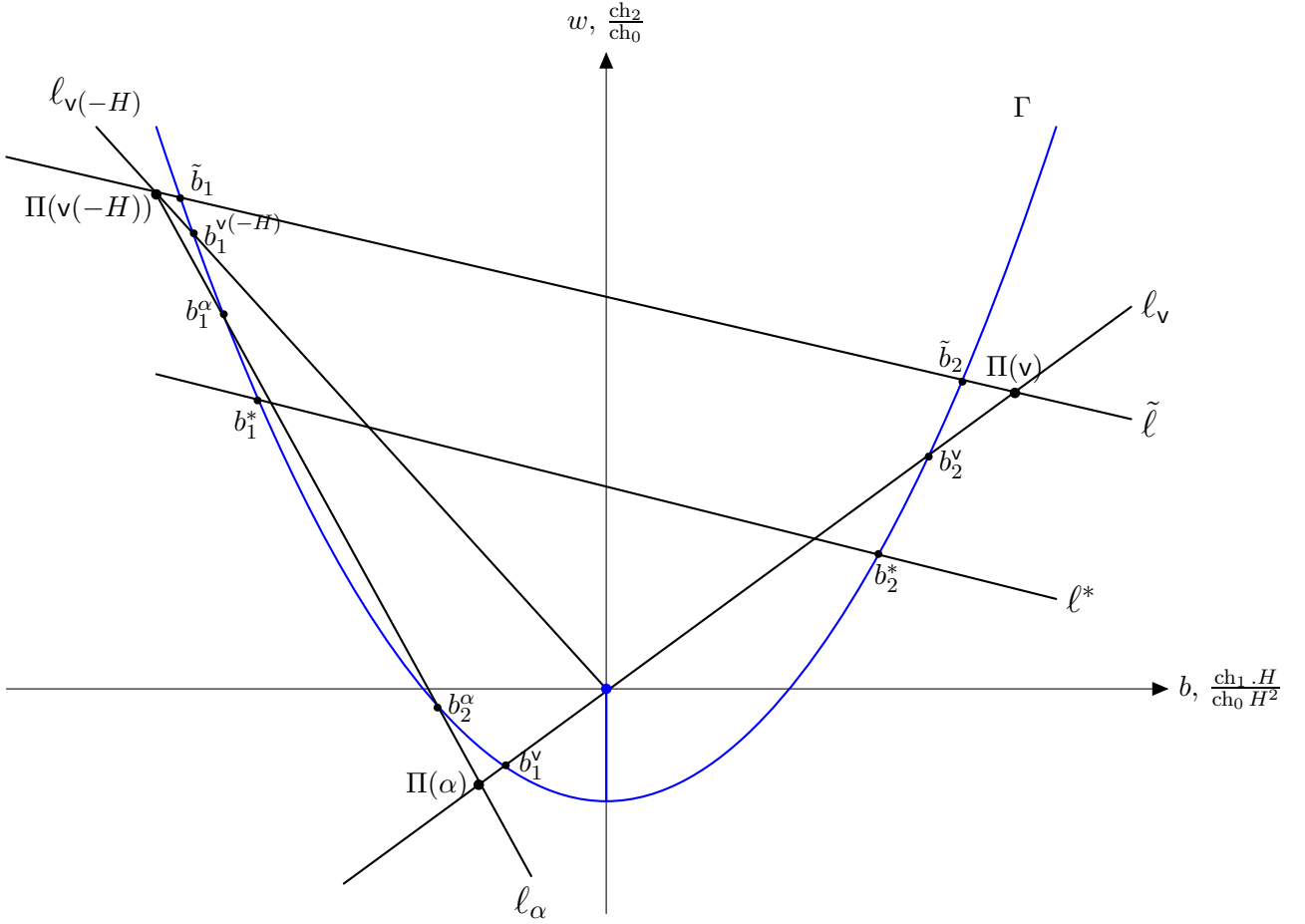

\begin{Prop} \label{prop.all bounds} 
	Suppose $H^2= 2g-2 \gg 0$.
	\begin{enumerate*}
		\item The origin point $(0,0)$ lies below $\ell^*$ and 
		\begin{equation*}
		\frac{k-r}{r}< b_1^* < b_2^* < \frac{k}{r}.
		\end{equation*}
		\item The line $\widetilde{\ell}$ passing through $\Pi(\v)$ and $\Pi(\v(-H))$ is parallel to $\ell^*$ and lies above it. It intersects the curve $\Gamma$ at two points with $b$-values $\widetilde b_1 < \widetilde b_2$ such that 
		\begin{equation*}
		\frac{k-r}{r} \leq \widetilde b_1 < b_1^* \qquad \text{and}\qquad 
		b_2^* < \widetilde b_2 \leq  \frac{k}{r}.
		\end{equation*}
		\item The line $\ell_{\v}$ passing through $\Pi(\v)$ and the origin $(0, 0)$ intersects $\Gamma$ at two points (except the origin) with $b$-values $b_1^{\v} < b_2^{\v}$ satisfying 
		\begin{equation*}
		 b_1^{\v} < -\frac{k}{s} + \frac{1}{s(s-1)} \qquad \text{and} \qquad b_2^* <b_2^{\v} %< \widetilde{b}_2 
		 \leq \frac{k}{r}.
		\end{equation*}
		\item The line $\ell_{\al}$ passing through $\Pi(\v(-H))$ and $\Pi(\al)$ intersects $\Gamma$ at two points with $b$-values $b_1^{\al} <b_2^{\al}$ so that 
		\begin{equation*}
		 \frac{k-r}{r} \leq %\widetilde{b}_1 <
		  b_1^{\al} < b_1^* \qquad \text{and} \qquad -\frac{k}{s} - \frac{1}{s(s-1)} < b_2^{\al}. 
		\end{equation*} 
		\item The line $\ell_{\v(-H)}$ passing through $\Pi(\v(-H))$ and the origin intersects the curve $\Gamma$ at two points (except the origin) with $b$-values $b_1^{\v(-H)} < 0 <b_2^{\v(-H)}$ such that 
		\begin{equation*}
		%\widetilde{b}_1 < 
		\frac{k-r}{r} \leq b_1^{\v(-H)} %< b_1^{\al} 
		< b_1^*.
		\end{equation*}  
	\end{enumerate*}
\end{Prop}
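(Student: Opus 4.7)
The plan is to prove each of the five parts by direct asymptotic computation, treating $H^2$ as a large parameter that absorbs all error terms. The uniform starting point is that assumption \eqref{assumption} together with $\v(-H)^2 = \v^2$ yields
\[
\Gamma\big(\tfrac{k}{r}\big) - \Pi(\v)_w \;=\; \tfrac{2k^2 + \v^2}{2r^2},\qquad
\Gamma\big(\tfrac{k-r}{r}\big) - \Pi(\v(-H))_w \;=\; \tfrac{2(k-r)^2 + \v^2}{2r^2},
\]
both nonnegative and bounded by an explicit constant depending only on $r, k$. So $\Pi(\v)$ and $\Pi(\v(-H))$ lie on or just below $\Gamma$ at the targeted $b$-values with only $O(1)$ vertical gap, while every slope appearing in the proposition is of order $H^2$; consequently each intersection with $\Gamma$ can be localized within $O(1/H^2)$ of its expected target, which for $H^2 \gg 0$ is far smaller than the explicit slacks $1/(r^2(r+1))$ and $1/(s(s-1))$ appearing in the conclusion.

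For parts (i) and (ii) I would start with $\widetilde{\ell}$. A direct calculation gives $(\Gamma - \widetilde{\ell})'(\tfrac{k}{r}) = \tfrac{H^2}{2} + \tfrac{2k}{r} > 0$ and $(\Gamma - \widetilde{\ell})'(\tfrac{k-r}{r}) = -\tfrac{H^2}{2} + \tfrac{2(k-r)}{r} < 0$, so the convex quadratic $\Gamma - \widetilde{\ell}$ is nonnegative at both endpoints while strictly decreasing at the left and strictly increasing at the right; its two roots are squeezed into $[(k-r)/r, k/r]$, establishing part (ii). A second-order expansion around each endpoint yields $\widetilde{b}_1 - (k-r)/r, \ k/r - \widetilde{b}_2 = O(1/H^2)$, hence both strictly below $1/(r^2(r+1))$ for $H^2 \gg 0$, so $\widetilde{\ell}$ itself satisfies the max condition and $\ell^*$ lies on or below it. Because lowering a line parallel to $\widetilde{\ell}$ drives its two intersection $b$-values symmetrically inward at identical rates, both $b_1 - (k-r)/r$ and $k/r - b_2$ remain nonnegative and grow monotonically; the equality $\max = 1/(r^2(r+1))$ at $\ell^*$ then delivers the strict inequalities of part (i). The origin sits below $\ell^*$ because the intercept of $\ell^*$, written as $\Gamma(b_2^*) - (\textrm{slope}) \cdot b_2^*$, has leading order $H^2 k(r-k)/(2r^2) > 0$.

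Parts (iii)--(v) apply the same template to the three other lines. For $\ell_\v$ the intersections satisfy $kb^2(H^2/2+1) - (s-r)b - k = 0$, so $b_1^\v b_2^\v = -\tfrac{2}{H^2+2}$. Plugging $b = -k/s$ into this polynomial gives the value $\tfrac{k(\v^2 + 2k^2)}{2s^2} \geq 0$, which places $-k/s$ outside $(b_1^\v, b_2^\v)$; combined with $b_2^\v > 0$, this forces $b_1^\v \geq -k/s$, and a Newton-step estimate around $-k/s$ using the derivative $-k^2(H^2+2)/s - (s-r)$ supplies the upper bound $b_1^\v < -k/s + 1/(s(s-1))$ once $H^2$ is large. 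The bound $b_2^\v \leq k/r$ follows from the same slope-comparison argument used in (ii), and $b_2^* < b_2^\v$ holds because $k/r - b_2^\v = O(1/H^2)$ whereas $k/r - b_2^*$ saturates near the $O(1)$ constant $1/(r^2(r+1))$. Parts (iv) and (v) are handled identically with $\ell_\al$ and $\ell_{\v(-H)}$: write the line explicitly from its defining Mukai projections, read off the product relation from the quadratic in $b$, plug in the target point to pin down the side on which the root lies, and close with a Newton-step estimate. The principal obstacle is bookkeeping rather than conceptual novelty: the slacks in (iii)--(iv) are of order $1/s^2 \sim 1/H^4$, so every asymptotic expansion must be retained through second order in $1/H^2$, and the explicit lower bounds on $H^2$ extracted from each of the five parts must be compatible, which is the content of Remark~\ref{Rem-big enough}.
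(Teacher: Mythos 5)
Your proposal is correct in substance and reaches all five parts, but it is organized genuinely differently from the paper's proof of Proposition \ref{prop.all bounds}. The paper (Section \ref{section.location}) first proves Proposition \ref{prop-b-i} for \emph{free} parameters $0<\epsilon,\epsilon'<\tfrac{1}{2r}$ by solving each line--parabola quadratic with the quadratic formula and comparing leading terms as $s\to\tfrac{k^2}{2r}H^2\to\infty$ (e.g.\ the computation culminating in \eqref{cond.1} for $b_1^{\v}$), then fixes the relative positions of the lines by locating the origin ($\beta-1<0$ for $\ell_{\al}$, and \eqref{cond.5} for $\widetilde{\ell}$), only afterwards specializing $\epsilon'=b_1^*-\tfrac{k-r}{r}$ and $\epsilon=\tfrac{k}{r}-b_2^*$ as in \eqref{chosen epsilon}, and proves that the origin lies below $\ell^*$ via the chord-length estimate \eqref{condition.extra}. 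You instead anchor everything at the target abscissae through exact identities -- I verified that $\Gamma(\tfrac{k}{r})-\tfrac{s-r}{r}=\tfrac{\v^2+2k^2}{2r^2}$, that the analogous gap at $b=\tfrac{k-r}{r}$ is $\tfrac{\v^2+2(k-r)^2}{2r^2}$, and that your quadratic for $\ell_{\v}$ takes the value $\tfrac{k(\v^2+2k^2)}{2s^2}$ at $b=-\tfrac{k}{s}$, all nonnegative precisely because of \eqref{assumption} -- and then localize each root by the sign of $(\Gamma-\ell)'$ there plus a one-step quadratic estimate. This buys a cleaner proof: the plug-in identities replace the paper's longer manipulations such as \eqref{cond.1}, and deriving part (b) directly from endpoint data avoids the mild awkwardness of invoking Proposition \ref{prop-b-i} for $\epsilon,\epsilon'$ that are chosen a posteriori and depend on $H^2$. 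The paper's route, in exchange, yields the explicit inequality list recorded in Remark \ref{Rem-big enough}.

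Three imprecisions should be repaired, none fatal. First, your ``Newton-step'' bounds point the wrong way as stated: for a convex quadratic with value $c\ge 0$, slope magnitude $d$ and leading coefficient $A$ at the anchor, the nearby root sits at distance $\delta=\tfrac{2c}{d+\sqrt{d^2-4Ac}}\in\bigl[\tfrac{c}{d},\tfrac{2c}{d}\bigr]$, so the Newton step $c/d$ is a \emph{lower} bound and you must use $2c/d$; this is harmless since $2c/d=O(H^{-6})$ still beats the slack $\tfrac{1}{s(s-1)}=O(H^{-4})$ in parts (c)--(d), and $O(H^{-2})$ beats $\tfrac{1}{r^2(r+1)}$ elsewhere. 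Second, the existence of two real intersection points is silently assumed in your sign-squeeze for $\widetilde{\ell}$; it follows from the same expansion ($d^2-4Ac>0$ at each anchor for $H^2\gg0$, or from the origin lying strictly below the line, the paper's \eqref{cond.5}), but must be said. Third, the intercept of $\ell^*$ is $\tfrac{H^2}{2}\bigl(\tfrac{k}{r}-\epsilon\bigr)\bigl(\tfrac{r-k}{r}-\epsilon\bigr)+O(1)$ with $\epsilon=\tfrac{k}{r}-b_2^*$ possibly as large as $\tfrac{1}{r^2(r+1)}$, not exactly $\tfrac{k(r-k)}{2r^2}H^2$; positivity survives because $\epsilon<\tfrac1r$. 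Relatedly, ``every slope is of order $H^2$'' fails for $(r,k)=(2,1)$, where $\ell^*$ and $\widetilde{\ell}$ are horizontal -- what your localization actually uses is that $(\Gamma-\ell)'$ is of order $H^2$ at the anchors, which always holds. Finally, make explicit the sandwich behind ``$\max=\tfrac{1}{r^2(r+1)}$ at $\ell^*$'': along the parallel family the two gaps differ by the constant $\tfrac{r-2k}{r}\cdot\tfrac{2}{H^2+2}=O(H^{-2})$ and tend to $\tfrac12$ at tangency, so the lowest admissible line exists, saturates \eqref{cond.for ellstar}, and both gaps are within $O(H^{-2})$ of $\tfrac{1}{r^2(r+1)}$ -- exactly what your ``saturates near'' comparisons in (c)--(e) require.
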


\begin{Lem}\label{lem-walls above ell start in section 3}
	Let $\ell^*$ intersects the vertical lines $b = \frac{k}{r}$ and $b = \frac{k-r}{r}$ at $p_1 = (\frac{k}{r}, w_1)$ and $p_2= (\frac{k-r}{r}, w_2)$, respectively. The line $\ell_1$ that passes through $p_1$ and the origin intersects the vertical line $b = \frac{k}{r} -\frac{1}{r(r-1)}$ at a point inside $U$. Similarly, the line $\ell_2$ through $p_2$ and the origin intersects $b = \frac{k-r}{r} + \frac{1}{r(r-1)}$ at a point inside $U$.    
\end{Lem}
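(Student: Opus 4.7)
Both statements are algebraic claims about the convex parabola $\Gamma$ and two secant lines through the origin, so I propose a direct computation exploiting $g \gg 0$.

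Write $q \coloneqq \tfrac{H^2}{2}+1$ so that $\Gamma(b) = b^2 q - 1$ for $b \neq 0$, and set $\epsilon \coloneqq \tfrac{k}{r} - b_2^*$, which satisfies $0 \leq \epsilon \leq \tfrac{1}{r^2(r+1)}$ by Definition \ref{def-ellstar}. First I would extract an explicit formula for $w_1 = \ell^*(\tfrac{k}{r})$: since $\ell^*$ is a chord of the parabola $\Gamma$ through $(b_1^*, \Gamma(b_1^*))$ and $(b_2^*, \Gamma(b_2^*))$, its slope equals $(b_1^* + b_2^*)q$; matching this with the prescribed slope $H^2(\tfrac{k}{r} - \tfrac{1}{2})$ and evaluating at $b = \tfrac{k}{r}$ yields the vertical gap
\[
D \coloneqq \Gamma(\tfrac{k}{r}) - w_1 = \epsilon\bigl[(q-1)(1-\epsilon) + \tfrac{2k}{r}\bigr],
\]
which is non-negative by convexity (since $\tfrac{k}{r}$ lies outside $[b_1^*, b_2^*]$) and bounded above by $\tfrac{q}{r^2(r+1)}$ up to lower-order terms.

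Next, $\ell_1$ has slope $\tfrac{r w_1}{k}$, so its value at $b' \coloneqq \tfrac{k}{r} - \tfrac{1}{r(r-1)}$ equals $w_1\bigl(1 - \tfrac{1}{k(r-1)}\bigr)$. Combining the Taylor expansion $\Gamma(b') = \Gamma(\tfrac{k}{r}) - \tfrac{2kq}{r^2(r-1)} + \tfrac{q}{r^2(r-1)^2}$ with $w_1 = \Gamma(\tfrac{k}{r}) - D$, the difference simplifies after rearrangement to
\[
\ell_1(b') - \Gamma(b') = \bigl(k(r-1)-1\bigr)\Bigl[\tfrac{q}{r^2(r-1)^2} - \tfrac{D}{k(r-1)}\Bigr] + \tfrac{1}{k(r-1)}.
\]
Substituting the bound on $D$, the leading $q$-coefficient becomes $\bigl(k(r-1)-1\bigr)\cdot \tfrac{k(r+1)-(r-1)}{k\, r^2(r-1)^2(r+1)}$, which is strictly positive for all $(r,k)$ with $k(r-1) > 1$; so for $g \gg 0$ we get $\ell_1(b') > \Gamma(b')$, i.e., the intersection point lies in $U$.

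The statement for $\ell_2$ and $p_2$ is entirely symmetric: one exchanges the roles of $b_1^*$ and $b_2^*$ and uses $\delta_1 \coloneqq b_1^* - \tfrac{k-r}{r} \leq \tfrac{1}{r^2(r+1)}$ in place of $\epsilon$. The main obstacle is purely algebraic bookkeeping; the only genuine degeneracy occurs at $r = 2$, $k = 1$, where $k(r-1) = 1$ and $b'$ collapses to $0$, and the piecewise convention $\Gamma(0) = 0$ must be invoked separately—this case (if it arises in the paper's later use of the lemma) would be handled by direct inspection.
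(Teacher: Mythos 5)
Your proposal is correct and is essentially the paper's own argument in different bookkeeping: both reduce the claim, for $g \gg 0$, to the decisive inequality $\epsilon(1-\epsilon) < \frac{k}{r^2(r-1)}$ for $\ell_1$ (and its mirror with $\frac{r-k}{r^2(r-1)}$ for $\ell_2$), which follows from the bound $\frac{1}{r^2(r+1)}$ in \eqref{cond.for ellstar}. The paper gets there by writing $\ell^*$ as $w = H^2(\frac{k}{r}-\frac{1}{2})b+\alpha-1$ and matching leading coefficients as $H^2 \to \infty$; you instead do exact chord algebra first (your rearranged identity for $\ell_1(b')-\Gamma(b')$ checks out) and pass to the limit afterwards, and for the second half you parametrise by $\delta_1 = b_1^*-\frac{k-r}{r}$ where the paper keeps everything in terms of $\epsilon$ --- differences of bookkeeping, not of method. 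Two trivial corrections: the exact formula is $D = \epsilon\bigl[(q-1)+\frac{2k}{r}-q\epsilon\bigr]$, smaller than your expression by $\epsilon^2$, which is immaterial to the asymptotics; and $D \geq 0$ is never actually used (only the upper bound on $\epsilon$ enters, so a hypothetical $\epsilon<0$ would only help) --- note also that $\epsilon \geq 0$ is the content of Proposition \ref{prop.all bounds}(a), not of Definition \ref{def-ellstar}, so your convexity aside is mildly circular but harmless.

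Your flagged degeneracy at $(r,k)=(2,1)$ is substantive, and in fact catches something the paper's proof passes over: in that case both vertical lines collapse to $b=0$, the right-hand side of the paper's inequality \eqref{cond.e1} divides by zero, and since $\ell_1$ passes through the origin, its intersection with $b=0$ is the origin itself, which lies on the boundary of $U$ (recall the convention $\Gamma(0)=0$) rather than inside it. So for $(r,k)=(2,1)$ the lemma as literally stated fails at the boundary. This does no damage downstream: the only application, in the proof of Proposition \ref{prop-wall}, asks merely that the relevant intersection point lie in the closure $\overline{U}$, which the origin does. Your planned ``direct inspection'' would therefore end by weakening the conclusion to $\overline{U}$ in this one case, not by verifying the statement as written.
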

We postpone the proof of Proposition \ref{prop.all bounds} and Lemma \ref{lem-walls above ell start in section 3} to Section \ref{section.location}. 
%In this  section, we first show that the morphism $\psi \colon M_{X,H}(v) \rightarrow \mathcal{BN}$ described in \eqref{function} is well-defined. Then we consider a slope semistable rank $r$-vector bundle $F$ on the curve $C$ of degree $2rs$ and describe the location of the wall that bounds the Gieseker chamber for the push-forward of $F$. Finally, in Proposition \ref{prop.2}, we show that if the number of global sections of $F$ is high enough, then it must be the restriction of a vector bundle on the surface.\par %which we will use to conclude that the morphism $\psi$ is bijective.  
%We assume throughout that $X$ is a K3 surface with Pic$(X) = \mathbb{Z}.H$ and $C$ is a curve in the linear system $|H|$ with the embedding $\iota \colon C \hookrightarrow X$. In this section, we analyse walls for the push-forward $\iota_* F$ of vector bundles $F \in \bn_C(r, kH^2, r+s)$.
The next Lemma gives vertical lines on which we can rule out walls of instability for objects of certain classes. 
\begin{Lem}\label{lem-minimal}
	Take an object $E \in \cD(X)$ with $\ch_{\leq 1}(E) = (r', k'H)$ such that $\gcd(r', k') =1$. There are unique $m^{\pm}, n^{\pm} \in \mathbb{Z}$ such that $\abs{n^{\pm}} < r'$, $n^{\pm}r' > 0$ and  
	\begin{equation*}
	m^{-} r' - n^{-} k' = -1 \qquad \text{and} \qquad m^+ r' - n^+ k' = 1. 
	\end{equation*}
	We set 
	\begin{equation*}
	b_E^{\pm} \coloneqq \frac{m^{\pm}}{n^\pm} = \frac{k'}{r'} \pm \frac{1}{n^\pm r'}\,,
	\end{equation*}
	then there is no wall for $E$ crossing the vertical lines $b = b_E^{\pm}$, i.e., if $E$ is $\sigma_{b=b_E^{\pm},w}$-semistable for some $w > \Gamma(b_E^{\pm})$, then it is $\sigma_{b=b_E^{\pm},w}$-stable for any $w > \Gamma(b_E^{\pm})$.   
\end{Lem}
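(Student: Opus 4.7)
The plan is to run a standard ``minimal imaginary part'' argument: on the vertical line $b = b_E^\pm$ the quantity $|\text{Im}\,Z_{b,w}(E)|$ equals $1/|n^\pm|$, and I will show that this is strictly smaller than any $|\text{Im}\,Z_{b,w}(F)|$ for a proper destabilising subobject $F$ with integral Chern data, leaving no room for a wall.

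First I would verify existence and uniqueness of $(m^\pm, n^\pm)$. Since $\gcd(r', k') = 1$, B\'ezout produces one solution to each of $m r' - n k' = \pm 1$, and the full solution set is obtained by adding integer multiples of $(k', r')$ to a fixed one. Intersecting with $\{n : n r' > 0,\; |n| < r'\}$ (an open interval of length $r'$ in $n$) leaves exactly one representative in each case, so $b_E^\pm = m^\pm/n^\pm$ is well defined. A direct computation then yields
$$\text{Im}\,Z_{b_E^\pm, w}(E) \;=\; \frac{H \cdot \ch_1(E)}{H^2} - b_E^\pm \ch_0(E) \;=\; k' - \frac{m^\pm}{n^\pm}\, r' \;=\; \frac{\mp 1}{n^\pm},$$
so $|n^\pm \cdot \text{Im}\,Z_{b_E^\pm, w}(E)| = 1$ independently of $w$.

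Now suppose toward a contradiction that some wall $\cW_E^i$ crosses the line $b = b_E^\pm$ at a point $(b_E^\pm, w_0)$. By Proposition \ref{locally finite set of walls}(d), there exist a shift $k \in \mathbb{Z}$ with $E[k] \in \cA(b_E^\pm)$ and a proper subobject $F \hookrightarrow E[k]$ in $\cA(b_E^\pm)$ with $\nu_{b_E^\pm, w_0}(F) = \nu_{b_E^\pm, w_0}(E[k])$, a common finite slope. Finiteness of the slope forces $\text{Im}\,Z(F)$ and $\text{Im}\,Z(E[k]/F)$ both to be strictly positive, so
$$0 \;<\; \text{Im}\,Z_{b_E^\pm, w_0}(F) \;<\; \text{Im}\,Z_{b_E^\pm, w_0}(E[k]) \;=\; \frac{1}{|n^\pm|}.$$
Writing $\ch_0(F) = r_F$ and $\ch_1(F) = k_F H$ with $r_F, k_F \in \mathbb{Z}$ (using $\Pic(X) = \mathbb{Z} \cdot H$), the quantity $n^\pm \cdot \text{Im}\,Z_{b_E^\pm, w_0}(F) = n^\pm k_F - m^\pm r_F$ is an integer, which the inequality above pins strictly between $0$ and $1$ in absolute value --- the desired contradiction. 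Hence no wall crosses $b = b_E^\pm$.

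I do not foresee a genuine obstacle: the only bookkeeping is selecting the sign of $n^\pm$ and the shift $k$ so that the relevant representative of $E$ actually sits in the heart $\cA(b_E^\pm)$; but since the argument depends only on $|\text{Im}\,Z|$ and on the integrality of $n^\pm k_F - m^\pm r_F$, these sign conventions are cosmetic. The ``stable on one side implies stable everywhere on the line'' conclusion then follows by the wall-and-chamber structure of Proposition \ref{locally finite set of walls}.
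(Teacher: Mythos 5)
Your proposal is correct and takes essentially the same route as the paper: the paper's own proof is a one-line appeal to the minimality of $\abs{\operatorname{Im}[Z_{b=b_E^{\pm},w}(E)]} = \frac{1}{\abs{n^{\pm}}}$ (citing Lemma 3.5 of the effective-restriction paper for details), and your argument simply writes out that minimality-plus-integrality contradiction explicitly, including the Bézout bookkeeping the paper calls trivial. The only point worth a word is the strict positivity of $\operatorname{Im}\,Z_{b_E^{\pm},w_0}(E[k]/F)$: if it vanished, the equality of finite slopes would force $Z_{b_E^{\pm},w_0}(E[k]/F)=0$, which is impossible for a nonzero object of the heart, so your strict inequality chain stands.
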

\begin{proof}
	The first part is trivial and the second part follows by the fact that the imaginary part 
	\begin{equation*}
	\abs{\text{Im}[Z\_{b = b_E^{\pm}, w}(E)]} = \abs{k' - \frac{m_i}{n_i} r'} = \frac{1}{\abs{n_i}}
	\end{equation*}
	is minimal, see \cite[Lemma 3.5]{feyz:effective-restriction-theorem} for more details. 
\end{proof}

Let $M_{X}(\v)$ be the moduli space of $H$-Gieseker semistable sheaves on the surface $X$ with Mukai vector $\v = (r,kH,s)$. 
\begin{Lem}\label{lem-locally-free}
	Any coherent sheaf $E \in M_{X}(\v)$ is a $\mu_H$-stable locally free sheaf.
\end{Lem}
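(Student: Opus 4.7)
The plan is to handle $\mu_H$-stability and local-freeness in turn, using the coprimality $\gcd(r,k)=1$ for the former and the narrow range of $\v^2$ prescribed by \eqref{assumption} for the latter. No wall-crossing is needed; both parts are direct numerical consequences of the hypotheses.

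For $\mu_H$-stability, first note that $H$-Gieseker semistability of $E$ forces $\mu_H$-semistability by comparing leading terms of reduced Hilbert polynomials. If $E$ were strictly $\mu_H$-semistable, a proper saturated subsheaf $F\subsetneq E$ would exist with $\ch_1(F)=k'H$ and $\rk F=r'$ satisfying $0<r'<r$ and $k'/r'=k/r$. Since $\gcd(r,k)=1$, the equality $kr'=k'r$ forces $r\mid r'$, contradicting $r'<r$. Hence $E$ is $\mu_H$-stable, and in particular simple and torsion-free.

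For local freeness, consider the canonical inclusion $E\hookrightarrow E^{\vee\vee}$. Since $X$ is a smooth surface, $E^{\vee\vee}$ is reflexive, hence locally free with the same rank and $c_1$ as $E$; the cokernel $Q$ is therefore a zero-dimensional sheaf of length $\ell\ge 0$. A routine saturation argument transfers $\mu_H$-stability from $E$ to $E^{\vee\vee}$: any proper subsheaf $F'\subsetneq E^{\vee\vee}$ produces $F:=F'\cap E\subsetneq E$ of the same rank and first Chern class as $F'$, since $F'/F$ embeds into $Q$ and is thus zero-dimensional. So $E^{\vee\vee}$ is $\mu_H$-stable, hence simple, and the standard Serre-duality bound on a K3 surface gives $\v(E^{\vee\vee})^2\ge -2$. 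Additivity of Mukai vectors together with $\v(Q)=(0,0,\ell)$ yields $\v(E^{\vee\vee})^2=\v^2-2r\ell$, so combining with $\v^2<-2+2r$ from \eqref{assumption} forces $\ell<1$, i.e.\ $\ell=0$. Therefore $E=E^{\vee\vee}$ is locally free.

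The only mildly subtle step is the transfer of $\mu_H$-stability to the double dual, which is routine but must be spelled out. The conceptual heart of the argument is that \eqref{assumption} is sharply calibrated: even one zero-dimensional point in $Q$ would drop $\v(E^{\vee\vee})^2$ by $2r$, violating the Mukai bound $\v^2\ge -2$ for simple sheaves on a K3 surface.
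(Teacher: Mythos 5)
Your proof is correct and follows essentially the same route as the paper: $\gcd(r,k)=1$ upgrades Gieseker semistability to $\mu_H$-stability, and the double dual $E\hookrightarrow E^{\vee\vee}\twoheadrightarrow Q$ with $\v(E^{\vee\vee})^2=\v^2-2r\ell\ge -2$ contradicts \eqref{assumption} unless $\ell=0$. Your write-up is in fact slightly more careful than the paper's (spelling out the transfer of stability to $E^{\vee\vee}$, where one should note that full-rank subsheaves are irrelevant for slope stability, and invoking the Mukai bound via simplicity), so there is nothing to fix.
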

\begin{proof}
	Since $\gcd(r, k) =1$, any $H$-Gieseker-semistable sheaf of class $\v$ is $\mu_H$-stable. Assume $E$ is not locally-free, then taking its reflexive hull gives the exact sequence
	\begin{equation*}
	E \hookrightarrow E^{\vee\vee} \twoheadrightarrow Q
	\end{equation*}
	where $Q$ is a torsion sheaf supported in dimension zero. We know the reflexive sheaf $E^{\vee \vee}$ is also slope-stable, so 
	\begin{equation*}
	-2 \leq v(E^{\vee \vee})^2 = (r, kH, s +\ch_3(Q))^2 = k^2H^2-2r(s +\ch_3(Q)) \leq k^2H^2-2rs -2r
	\end{equation*}
	which is not possible by our assumption \eqref{assumption}.
\end{proof}

 Define the object $K_E[1] \in \mathcal{D}(X)$ as the cone of the evaluation map:
\begin{equation}\label{cone}
\mathcal{O}_X^{h^0(X,E)} \xrightarrow{\text{ev}} E \rightarrow K_E[1].
\end{equation}
\begin{Prop}\label{prop.restriction}
	Let $E \in M_{X}(\v)$ be a $\mu_H$-stable vector bundle on the surface $X$. Then
	\begin{itemize}
		%\item[(a)]\label{a}  $h^0(X,E) = r+s$ and $h^1(X,E(-H)) = 0$.
		\item[(a)] $\Hom\big(E,E(-H)[1]\big) = 0$.
		\item[(b)] For any curve $C \in |H|$, the restriction $E|_C$ is a stable\footnote{Stability on the curve $C$ is defined as in Definition \ref{def-stability}} vector bundle on $C$ and $h^0(C,E|_C) = r+s$. %In particular, the morphism $\psi$ described in \eqref{function} is well-defined.
		\item[(c)] The object $K_E$ is a $\mu_H$-stable locally free sheaf on $X$ of Mukai vector $v(K_{E}) = \al = (s, -kH, r) $ and $\Hom\big(K_E, E(-H)[1]\big) = 0$.
	\end{itemize}
\end{Prop}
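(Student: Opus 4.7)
The proof strategy is to handle all three parts through a uniform wall-crossing template on the slice $U$. Given two objects $A, B$ whose projections $\Pi(A), \Pi(B)$ are aligned on a distinguished line $\ell$ from Proposition~\ref{prop.all bounds}, the equality $\nu_{b,w}(A) = \nu_{b,w}(B[1])$ holds for $(b,w) \in \ell$ in the appropriate heart-strip; on one side of $\ell$ the slopes separate, and if both $A$ and $B[1]$ are $\sigma_{b,w}$-stable at such a point (and non-isomorphic), the standard comparison forces $\Hom(A, B[1]) = 0$. The real work lies in certifying $\sigma_{b,w}$-stability by ruling out destabilising walls, for which I would invoke Lemma~\ref{lem-minimal}: the coprimality hypotheses on $\v$ and $\al$ supply vertical no-wall lines for the relevant objects, and Proposition~\ref{prop.all bounds} places these vertical lines favourably relative to whichever alignment line $\ell$ is in use.

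\medskip

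For part~(a), I apply the template with $A = E$, $B = E(-H)$, and $\ell = \widetilde{\ell}$. In the vertical strip $(k-r)/r < b < k/r$ both $E$ and $E(-H)[1]$ lie in $\cA(b)$, and just above $\widetilde{\ell}$ one has $\nu_{b,w}(E) < \nu_{b,w}(E(-H)[1])$. Lemma~\ref{lem-minimal} applied to $\v$ (using $\gcd(r,k) = 1$) gives no-wall vertical lines for $E$ just to either side of $k/r$; Proposition~\ref{prop.all bounds}(b) ensures these lie outside the interval $[\widetilde b_1, \widetilde b_2]$, and the analogous statement holds for $E(-H)$. A wall-chamber argument then produces a point just above $\widetilde{\ell}$ at which both objects are $\sigma_{b,w}$-stable, yielding the vanishing.

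\medskip

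For part~(b), applying $\Hom(\cO_X, -)$ to the triangle $E(-H) \to E \to \iota_* E|_C \to E(-H)[1]$ and using $H^0(E(-H)) = 0$ (immediate from $\mu_H(E(-H)) < 0$ and stability) together with $h^0(X,E) = r+s$ and $h^1(X,E) = 0$ (from the wall analysis of Section~\ref{section.wall-crossing}), the equality $h^0(C,E|_C) = r+s$ reduces to the vanishing $H^1(E(-H)) = \Hom(\cO_X, E(-H)[1]) = 0$. This follows from the template with $A = \cO_X$, $B = E(-H)$, and the line $\ell_{\v(-H)}$ of Proposition~\ref{prop.all bounds}(e) in the strip $(k-r)/r < b < 0$, where both $\cO_X$ and $E(-H)[1]$ lie in $\cA(b)$. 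Stability of $E|_C$ on $C$ is then proved by contradiction: a destabilising subsheaf $F \subsetneq E|_C$ would push forward to $\iota_* F \hookrightarrow \iota_* E|_C$ in $\cA(b)$, forcing a wall for $\iota_* E|_C$ above $\ell_{\v}$ in $U$, contradicting the wall list for $\ch(\iota_* E|_C)$ established earlier in the section.

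\medskip

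For part~(c), the evaluation sequence $0 \to K_E \to \cO_X^{\oplus(r+s)} \to E \to 0$ is exact because $E$ is globally generated by its $r+s$ sections (a consequence of $h^0(X,E) = r+s$ together with stability at appropriate $\sigma_{b,w}$), and a direct Chern-class computation gives $v(K_E) = \al$; local freeness follows since $K_E$ is the kernel of a surjection between locally free sheaves on a smooth surface. For $\mu_H$-stability of $K_E$, I would read the evaluation sequence in $\cA(b)$ along the line $\ell_{\v}$: for suitable $b$ in this strip $\cO_X$, $E$, and $K_E[1]$ all lie in $\cA(b)$ with equal $\nu_{b,w}$-slope, so $K_E[1]$ is $\sigma_{b,w}$-semistable on $\ell_{\v}$; applying Lemma~\ref{lem-minimal} with $(r',k') = (s, -k)$ (coprime by $\gcd(s,k) = 1$) and propagating back to the large volume limit yields $\mu_H$-stability. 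Finally, $\Hom(K_E, E(-H)[1]) = 0$ follows from the template with $A = K_E$, $B = E(-H)$, and the line $\ell_{\al}$ of Proposition~\ref{prop.all bounds}(d). The most delicate point throughout is the matching of no-wall vertical lines from Lemma~\ref{lem-minimal} with alignment lines from Proposition~\ref{prop.all bounds}, and it is precisely this matching that forces the hypothesis $g \gg 0$.
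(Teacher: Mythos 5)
Your template is indeed the paper's core mechanism (two objects made $\sigma_{b,w}$-stable of equal slope along a distinguished line, hence $\Hom$-vanishing, with Lemma~\ref{lem-minimal} supplying vertical no-wall lines), but several load-bearing steps are missing or fail as stated. First, you take $h^0(X,E)=r+s$ and $h^1(X,E)=0$ as given ``from the wall analysis of Section~\ref{section.wall-crossing}'', but nothing before Proposition~\ref{prop.restriction} establishes this --- it is the substance of Step~1 of its proof. The Euler characteristic together with $\Hom(E,\cO_X)=0$ only gives $h^0(X,E)\geq r+s$; the matching upper bound is the genuinely nontrivial input, obtained from the Brill--Noether estimate of \cite[Lemma 3.2]{feyz:mukai-program} along the wall $\ell_{\v}$ made by $\cO_X$, namely $h^0(X,E)\leq \tfrac{r+s}{2}+\tfrac{1}{2}\sqrt{(r-s)^2+k^2(2H^2+4)}$, combined with the computation (using \eqref{assumption} and \eqref{bound for s}) that this bound is $<r+s+1$. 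Without it, your part (b) and the surjectivity of the evaluation map in (c) have no foundation. Second, your stability argument for $E|_C$ is circular: the ``wall list for $\ch(\iota_*E|_C)$'' is Proposition~\ref{prop-wall}, whose hypothesis is that the bundle on $C$ is \emph{already} semistable, and whose part (d) invokes Proposition~\ref{prop.restriction} itself. The paper avoids this by reading the extension $E\hookrightarrow \iota_*E|_C \twoheadrightarrow E(-H)[1]$ in $\cA(0)$ and running the restriction-theorem argument of \cite[Corollary 4.3]{feyz:effective-restriction-theorem}, using only the stability of the two outer terms established within this proof.

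Third, in (c) global generation of $E$ is precisely what must be proven, not an input: the paper derives surjectivity of $\mathrm{ev}$ by showing $K_E[1]$ is $\sigma_{b,w}$-semistable on $\ell_{\v}$ (injectivity of $\mathrm{ev}$ in the abelian category of semistables of slope $\nu_{b,w}(\cO_X)$), propagating stability to the large-volume limit, applying \cite[Lemma 6.18]{macri:intro-bridgeland-stability} so that $\cH^0(K_E[1])=\cok(\mathrm{ev})$ is $0$-dimensional, and killing it by the discriminant computation \eqref{com}. Moreover your propagation via Lemma~\ref{lem-minimal} breaks exactly when $k=1$: then $b^+_{K_E}=0$ while $\ell_{\v}$ passes through the origin, so the vertical no-wall line meets $\ell_{\v}\cap U$ only at the origin; the paper needs a separate direct argument there ($\ch(K_1)=s_1\ch(\cO_X)+t_1\ch(E)$ with $t_1\in\{0,1\}$, then $\Hom$-vanishings), and similarly a special case $k=r-1$ for $E(-H)[1]$ in Step~3. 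Finally, two geometric slips: the $\Hom$-vanishing in (a) requires either equal slopes \emph{on} $\widetilde{\ell}$ (both objects stable and non-isomorphic, the paper's route) or a point where $\nu_{b,w}(E)>\nu_{b,w}(E(-H)[1])$, i.e.\ \emph{below} $\widetilde{\ell}$; at your point just above the wall one has $\nu_{b,w}(E)<\nu_{b,w}(E(-H)[1])$, which permits nonzero maps from a stable object of smaller slope to one of larger slope. Likewise, the no-wall vertical lines must cross the alignment segment \emph{inside} $U$, i.e.\ lie inside $(\widetilde{b}_1,\widetilde{b}_2)$ --- the paper's bounds place $b_E^-$ between $0$ and $b_2^*$ --- not ``outside'' that interval as you assert; placed outside, Lemma~\ref{lem-minimal} certifies nothing about stability along the wall, since walls hugging $\Gamma$ near its endpoints would never cross your vertical line.
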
 
\begin{proof}
	\textbf{Step 1.} We first do wall-crossing for the bundle $E$. By \cite[Proposition 14.2]{bridgeland:K3-surfaces}, $E$ is $\sigma_{b, w}$-stable for $b< \mu_H(E)$ and $w \gg 0$. As in Lemma \ref{lem-minimal} consider the vertical line $b= b^-_E$. We know 
	\begin{equation*}
	\frac{k}{r} -\frac{1}{r} \leq\, b_E^- \,\leq \frac{k}{r}-\frac{1}{r(r-1)}.  
	\end{equation*}  
	By Proposition \ref{prop.all bounds} (c), the line $\ell_{\v}$ intersects $\Gamma$ at a point with positive $b$-value $b_2^{\v}$ satisfying
	%Now consider the line $\ell_{\v}$ passing through $\Pi(E)$ and the origin. By Proposition \ref{prop.all bounds}(b) and \eqref{cond.2.for ellstar},
	\begin{equation*}
	\frac{k}{r}-\frac{1}{r^2(r+1)} \overset{\eqref{cond.for ellstar}}{\leq} b_2^* < b_2^{\v}. 
	\end{equation*}
	Thus the line segment $\ell_\v \cap U$ intersects the vertical line $b= b^-_E$ at a point inside $U$. Hence Lemma \ref{lem-minimal} implies that there is no wall for $E$ above $\ell_{\v}$.%, i.e. it is $\sigma_{b= 0, w}$-stable for any $w > 0$. 
	
	We have $\Hom(\cO_X, E[2]) = \Hom(E, \cO_X)^* = 0$, so 
	\begin{equation*}
	\chi(\cO_X, E) = r+s = h^0(E) -h^1(E). 
	\end{equation*} 
	Thus $h^0(E) \neq 0$ and $\cO_X$ makes a wall for $E$. This wall is indeed $\ell_\v \cap U$ for $b \in (b_1^{\v}, 0)$. Take a point $(b,w)$ on this wall and consider the evaluation map 
	\begin{equation*}
	\cO_X^{h^0(E)} \xhookrightarrow{\text{ev}} E \twoheadrightarrow K_E[1]
	\end{equation*}
	We know $\cO_X$ and $E$ are $\sigma_{b,w}$-semistable of the same $\nu\_{b,w}$-slope and $\cO_X$ is strictly $\sigma_{b,w}$-stable. Thus the map $\ev$ is injective in the abelian subcategory of $\sigma_{b,w}$-semistable objects in $\cA(b)$ of $\nu_{b,w}$-slope equal to $\nu_{b,w}(\cO_X)$. Therefore the co-kernel $K_E[1]$ is also $\sigma_{b,w}$-semistable. 
	  
	%By construction, $K_E$ is $\sigma_{b,w}$-semistable for $(b,w) \in \ell_v \cap U$ when $b < 0$. Thus 
	By \cite[Lemma 3.2]{feyz:mukai-program}, we know    
	\begin{equation*}
	h^0(X, E) \leq \frac{r+s}{2} + \frac{\sqrt{(r-s)^2 +k^2(2H^2 +4)}}{2} \coloneqq h
	\end{equation*}
	We claim $h < r+s+1$, i.e. 
	\begin{equation*}
     (r-s)^2 +k^2(2H^2 +4) < (r+s +2)^2.
	\end{equation*}
	This is equivalent to $k^2H^2 -2rs< 2 +2(r+s)-2k^2 $. Since $\v^2 =k^2H^2-2rs < 2r-2$ by \eqref{assumption}, we only need to show 
	\begin{equation*}
	2r-2 \leq 2+2(r+s)-2k^2
	\end{equation*}
	i.e. $-2+k^2 \leq s$ which holds by \eqref{bound for s} if $H^2 > 2r$. Therefore $h^0(E) = r+s$, $h^1(E) = 0$ and $v(K_E[1]) = -\al = (-s, kH, -r)$. 
	
	\bigskip

	\textbf{Step 2.} The next step is to examine walls for $K_E[1]$. By Proposition \ref{prop.all bounds} (c), our Brill-Noether wall $\ell_\v \cap U$ intersects $\Gamma$ at a point with $b$-value $b_1^\v < -\frac{k}{s} + \frac{1}{s(s-1)}$. We know $\gcd(s, k) = 1$, so if $k >1$, the value $b_{K_E}^{+}$ in Lemma \ref{lem-minimal} satisfies
	\begin{equation*}
	-\frac{k}{s} + \frac{1}{s(s-1)} \leq b_{K_E}^{+} \leq -\frac{k}{s} + \frac{1}{s} < 0. 
	\end{equation*}  
	Thus the vertical line $b = b_{K_E}^{+}$ intersects $\ell_\v \cap U$ at a point inside $U$. Hence $\sigma_{b,w}$-semistability of $K_E[1]$ for $(b,w) \in \ell_\v \cap U$ implies that it is $\sigma_{b = b_{K_E}^{+} ,w}$-stable for any $w > \Gamma(b_{K_E}^{+})$ by Lemma \ref{lem-minimal}.

	If $k=1$, then we show directly that $K_E[1]$ is $\sigma_{b,w}$-stable for $(b,w) \in \ell_\v \cap U$ when $b < 0$. Assume otherwise, and let $K_1$ be a $\sigma_{b,w}$-stable factor of $K_E$. There are $t_1, s_1 \in \mathbb{Q}$ such that $\ch(K_1) = s_1\ch(\cO_X) +t_1\ch(E)$, see for instance \cite[Remark 2.5]{feyz:mukai-program}. Taking $\ch_1$ implies $t_1 \in \mathbb{Z}$. Moving along the wall $\ell_{\v}$ towards the origin implies that 
	\begin{equation*}
	0 \leq \lim\limits_{b \rightarrow 0}\text{Im}[Z_{(b,w)}(K_1)] = t_1 \leq \lim\limits_{b \rightarrow 0}\text{Im}[Z_{(b,w)}(K_E[1])] =1.
	\end{equation*}   
	Thus $t_1 = 0$ or $1$, so the structure sheaf $\mathcal{O}_X$ is a subobject or a quotient of $K_E[1]$. By definition Hom$(\mathcal{O}_X,K_E[1]) = 0$ and since Hom$(E , \mathcal{O}_X) = 0$, we have Hom$(K_E[1],\mathcal{O}_X) =0$, a contradiction. Therefore $K_E[1]$ is $\sigma_{b,w}$-stable along the wall $\ell_{\v} \cap U$ for $b<0$. Openness of stability implies that it is $\sigma_{b=0, w}$-stable for $0 < w \ll 1$. Then Lemma \ref{lem-minimal} implies that $K_E[1]$ is $\sigma_{b=0, w}$-stable for any $w > 0$. 
	
	Hence in both cases, $K_E[1]$ is $\sigma_{b,w}$-stable for $b > \mu(K_E)$ and $w \gg 0$. \cite[Lemma 6.18]{macri:intro-bridgeland-stability} implies that $\cH^{-1}(K_E[1])$ is a $\mu_H$-semistable torsion-free sheaf and $\cH^0(K_E[1])$ is supported in dimension zero. Since $\gcd(s, k) =1$, $\cH^{-1}(K_E[1])$ is $\mu_H$-stable. If $\cH^{0}(K_E[1])$ is non-zero, then 
	\begin{align}\label{com}
	-2 \leq v(\cH^{-1}(K_E[1]))^2 =\ & \big(s, -kH, r+ \ch_3(\cH^0(K_E[1]))\big)^2 \nonumber\\
	=\ & v(E)^2 -2s\ch_3(\cH^0(K_E[1])) \nonumber\\
	\overset{\eqref{assumption}}{<}\ & 2r-2-2s\,.
	\end{align}
	This implies $s < r$, thus \eqref{bound for s} gives 
	\begin{equation*}
	-1+\frac{1}{r} + \frac{k^2}{2r}H^2 < r
	\end{equation*} 
	which is not possible as $H^2> 2r(r+1)$. Hence $K_E$ is a $\mu_H$-stable sheaf, and so is $\sigma_{b,w}$-stable for $b > -\frac{k}{s}$ and $w \gg 0$ by \cite[Proposition 14.2]{bridgeland:K3-surfaces}. By taking the reflexive hull and doing the same computations as in \eqref{com}, one can easily check that $K_E$ is indeed locally-free.

	\textbf{Step 3.} The final step is to analyse walls for $K_E$ and $E(-H)[1]$ when $b < \mu(K_E)$. Consider the vertical line $b = b^-_{K_E}$ as in Lemma \ref{lem-minimal}. By Proposition \ref{prop.all bounds} (d), the line $\ell_{\al}$ intersects $\Gamma$ at two points with $b$-values $b_1^{\alpha} < b_2^{\alpha}$ such that 
	\begin{equation}\label{w.2}
	b_1^{\al} < -\frac{k}{s} - \frac{1}{s} \leq b^-_{K_E} \leq -\frac{k}{s} -\frac{1}{s(s-1)}  < b_2^{\al}.  
	\end{equation} 
   The first inequality follows from 
  \begin{equation*}
  b_1^{\al} < b_1^* \overset{\eqref{cond.for ellstar}}{\leq} \frac{k-r}{r} + \frac{1}{r^2(r+1)}
  \end{equation*}
  and the point that for $g \gg 0$, we have 
	\begin{align}\label{cond--1}
	\frac{k}{r} -1 + \frac{1}{r^2(r+1)} <\ & \frac{-(k+1)}{\frac{k^2}{2r}H^2-1} \\
	 \overset{\eqref{bound for s}}{<} & -\frac{k}{s} - \frac{1}{s}.\nonumber
	\end{align}
    Thus Lemma \ref{lem-minimal} shows that by moving down along the vertical line $b= b_{K_E}^-$ we get $K_E$ is $\sigma_{b,w}$-stable for any $(b,w) \in \ell_\al \cap U$.
	
	On the other hand, $E$ is a $\mu_H$-stable vector bundle, thus \cite[Lemma 3.3]{feyz:effective-restriction-theorem} implies that $E(-H)[1]$ is $\sigma_{b,w}$-stable for $b > \frac{k-r}{r}$ and $w \gg 0$. If $k \neq r-1$, consider the vertical line $b = b^+_{E(-H)}$, then by Proposition \ref{prop.all bounds} (d),   
	\begin{equation*}
	b_1^{\al} < b_1^* \overset{\eqref{cond.for ellstar}}{\leq} \frac{k-r}{r} + \frac{1}{r(r-1)} \leq b^+_{E(-H)} \leq \frac{k-r}{r} + \frac{1}{r} \leq -\frac{1}{r}. 
	\end{equation*} 
	If $k=r-1$, then we consider the vertical line $b = -\frac{1}{r+1}$ and the same argument as in Lemma \ref{lem-minimal} shows that there is no wall for $E(-H)[1]$ crossing this vertical line. For $g \gg 0$, we have 
	\begin{align}\label{cond--2}
	 -\frac{1}{r+1} < & \frac{-(k+1)}{\frac{k^2}{2r}H^2-1}\\
	 \overset{\eqref{bound for s}}{<} & -\frac{k+1}{s} < b_2^{\alpha}\nonumber,
	\end{align}
    Hence Proposition \ref{prop.all bounds} implies the lines $\widetilde{\ell}$, $\ell_{\v(-H)}$ and $\ell_{\al}$ intersect the vertical line $b = b^+_{E(-H)}$ if $k<r-1$ or $b=-\frac{1}{r+1}$ in case $k =r-1$ at points inside $U$. Hence by Lemma \ref{lem-minimal}, $E(-H)[1]$ is $\sigma_{b,w}$-stable along these three lines and so we get the following:
    
	\noindent (a) $E$ and $E(-H)[1]$ are stable of the same $\nu\_{b,w}$-slope for $(b,w) \in \widetilde{\ell}\, \cap\, U$, therefore $\Hom(E, E(-H)[1]) = 0$ as claimed in part (a). 
	
	\noindent (b) We know $\iota_*E|_C$ lies in the exact sequence
	\begin{equation*}
	E \hookrightarrow \iota_* E|_C \twoheadrightarrow E(-H)[1]
	\end{equation*}   
	in $\cA(b =0)$. Applying the same argument as in \cite[Corollary 4.3]{feyz:effective-restriction-theorem} implies that $E|_C$ is stable. Since $E(-H)[1]$ and $\cO_X$ are $\sigma_{b,w}$-stable of the same $\nu_{b,w}$-slope for $(b,w) \in \ell_{\v(-H)}\, \cap\, U$, we get $\Hom(\cO_X, E(-H)[1]) = 0$.
	Thus $h^0(C, E|_C) = h^0(X, E) = r+s$. This completes the proof of (b).  
	
	\noindent (c) We have shown that $K_E$ is a $\mu_H$-stable locally free sheaf. Moreover $E(-H)[1]$ and $K_E$ are $\sigma_{b,w}$-stable of the same slope with respect to $(b,w) \in \ell_\al\, \cap\, U$, so $\Hom(K_E, E(-H)[1]) = 0$ as claimed in part (c). 
\end{proof}

\subsection*{Wall-crossing for the push-forward of vector bundles on curves} As before $C \xhookrightarrow{\iota} X$ is a curve in the linear system $|H|$. Let $F$ be a semistable vector bundle on $C$ of rank $r$ and degree $kH^2$. Then $\iota_*F$ is of Mukai vector  
\begin{equation*}
v(\iota_* F) = \left(0, \ rH, \ kH^2 -\frac{r}{2}H^2  \right). 
\end{equation*} 
Semistability of $F$ implies that $\iota_* F$ is $H$-Gieseker-semistable, so it is $\sigma_{b,w}$-semistable for $w \gg 0$ \cite[Proposition 14.2]{bridgeland:K3-surfaces}. The walls of instability for $\iota_*F$ are parallel lines of slope $H^2\left(\frac{k}{r} -\frac{1}{2}\right)$.

 \begin{Prop}\label{prop-wall}
 	%Let $F$ be a rank $r$ vector bundle on the curve $C$ with degree $kH^2$, and 
 	Let $\ell$ be a wall for $\iota_* F$ which lies above or on the line $\ell^*$ (see Definition \ref{def-ellstar}). Let $F_1 \hookrightarrow \iota_*F \twoheadrightarrow F_2$ be a destabilising sequence along $\ell$ with $\nu\_{b,w}(F_1) = \nu\_{b,w}(\iota_* F)$ for $(b,w) \in \ell \cap U$ and $\nu\_{b,w^-}(F_1) > \nu\_{b,w^-}(\iota_*F)$ for $(b,w^-)\in U$ below $\ell \cap U$. Then 
 	\begin{enumerate*}
 		\item $F_1$ is a $\mu_H$-stable sheaf with $\ch_{\leq 1}(F_1) = (r, kH)$. % which is $\sigma_{b=0, w}$-stable for any $w >0$.
 		\item $\cH^{-1}(F_2)$ is a $\mu_H$-stable sheaf with $\ch_{\leq 1}(\cH^{-1}(F_2)) = (r, (k-r)H)$ and $\cH^0(F_2)$ is either zero or a torsion sheaf supported in dimension zero. %Moreover, $F_2$ is $\sigma_{b_0, w}$-stable for $b_0 = \frac{k-r}{r} + \frac{1}{r(r-1)}$ and any $w > \Gamma(b_0)$. 
 		\item The sequence $F_1 \hookrightarrow \iota_* F \twoheadrightarrow F_2$ is the HN filtration of $\iota_*F$ with respect to $\sigma_{b=0,w}$ for $0 < w \ll 1$.  
 		\item The wall $\ell$ lies below or on $\widetilde{\ell}$. If it coincides with $\widetilde{\ell}$, then $\ch_2(F_1)$ is maximum (equal to $s-r$) and $F= F_1|_C$. 
 	\end{enumerate*} 
     %In particular, there is no wall for vector bundles $F \in \bn_C$ above $\ell_v$. 	 
\end{Prop}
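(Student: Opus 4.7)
The plan is to combine three ingredients on the destabilising sequence $F_1\hookrightarrow\iota_*F\twoheadrightarrow F_2$ along $\ell$: the wall condition (which fixes the line through $\Pi(F_1)$ to have slope $H^2(k/r-1/2)$); the tilt conditions defining $\cA(b)$, which for every $b$ in the $b$-range $(b_1,b_2)$ of $\ell\cap U$ force $\mu_H^-(F_1^{\mathrm{tf}})>b$ and $\mu_H^+(\cH^{-1}(F_2))\le b$; and the hypothesis $\ell\ge\ell^*$, which by Definition~\ref{def-ellstar} yields $b_1\le\frac{k-r}{r}+\frac{1}{r^2(r+1)}$ and $b_2\ge\frac{k}{r}-\frac{1}{r^2(r+1)}$. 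The first step is to take sheaf cohomology of the sequence in $\cA(b)$: since $\iota_*F$ is a sheaf, $\cH^{-1}(F_1)=0$, so $F_1$ is a sheaf and
\[0\to\cH^{-1}(F_2)\to F_1\to\iota_*F\to\cH^0(F_2)\to 0\]
is exact in $\Coh(X)$, with image $I\coloneqq\im(F_1\to\iota_*F)\subset\iota_*F$ satisfying $\ch_0(I)=0$ and $\ch_1(I)=mH$ for some integer $0\le m\le r$.

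Setting $r_1\coloneqq\ch_0(F_1)=\ch_0(\cH^{-1}(F_2))$, $k_1\coloneqq\ch_1(F_1)/H$, and writing $m_1\ge 0$ for the $\ch_1/H$ of any $1$-dimensional torsion of $F_1$, I would next read off from the tilt conditions $(k_1-m_1)/r_1=\mu_H(F_1^{\mathrm{tf}})\ge b_2$ and $(k_1-m)/r_1=\mu_H(\cH^{-1}(F_2))\le b_1$. Subtracting gives $(m-m_1)/r_1\ge b_2-b_1\ge 1-\frac{2}{r^2(r+1)}$, so $m-m_1\ge r_1$ and hence $r_1\le r-m_1\le r$. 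Combined with $\gcd(r,k)=1$ and the bounds on $b_1,b_2$ above, a Farey-mediant case analysis then forces $r_1=r$, $k_1=k$, $m=r$, $m_1=0$; any residual small-rank candidates can be excluded using an auxiliary Bogomolov inequality on $F_1^{\mathrm{tf}}$ that bounds the location of $\Pi(F_1)$ on $\ell$. Thus $\ch_{\le 1}(F_1)=(r,kH)$, $\ch_{\le 1}(\cH^{-1}(F_2))=(r,(k-r)H)$, and $\cH^0(F_2)$ has zero-dimensional support. The $\mu_H$-stability of $F_1$ and $\cH^{-1}(F_2)$ then follows from $\gcd(r,k)=1$ together with their $\sigma_{b,w}$-semistability along $\ell$: any $\mu_H$-destabilising subsheaf would have strictly smaller slope and thus correspond to a wall strictly above $\ell$, contradicting semistability on $\ell$.

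For claim (3), Proposition~\ref{prop.all bounds}(a) places the origin strictly below $\ell^*$ and hence strictly below $\ell$; therefore for $0<w\ll 1$ the point $(0,w)$ lies below $\ell$, $F_1$ strictly destabilises $\iota_*F$ with respect to $\sigma_{0,w}$, and the absence of further walls for $F_1$ or $F_2$ between $\ell$ and $(b=0,w\ll 1)$ follows from the numerical classes of Step~2 together with the location of $\ell^*$. For claim (4), Bogomolov applied to the $\mu_H$-stable bundle $F_1$ with $v(F_1)=(r,kH,r+\ch_2(F_1))$ gives
\[v(F_1)^2=k^2H^2-2r(r+\ch_2(F_1))\ge-2,\]
whence $\ch_2(F_1)\le\frac{k^2H^2+2}{2r}-r\le s-r$ by~\eqref{assumption}; hence $\Pi(F_1)=(k/r,\ch_2(F_1)/r)$ lies at or below $\Pi(\v)=(k/r,(s-r)/r)$, and since $\ell,\widetilde{\ell}$ are parallel, $\ell$ lies at or below $\widetilde{\ell}$. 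Equality forces $v(F_1)=\v$, so $F_1\in M_X(\v)$; by Proposition~\ref{prop.restriction}(b), $F_1|_C$ is a stable rank-$r$ bundle of degree $kH^2$, and adjunction applied to the surjection $F_1\twoheadrightarrow I$ (with $\iota_*F/I$ zero-dimensional) yields a nonzero map $F_1|_C\to F$ between stable bundles of equal rank and degree on $C$, forcing $F\cong F_1|_C$. The chief obstacle I anticipate is the Farey-mediant identification in Step~2: combining both slope inequalities with the integer constraints $0\le m\le r$ and $\gcd(r,k)=1$ is delicate, and excluding sporadic small-rank candidates likely requires the auxiliary Bogomolov input on $F_1$ mentioned above.
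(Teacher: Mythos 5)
Your Step 1 and your slope inequality reproduce one half of the paper's argument (the paper's \eqref{below}), but there is a genuine gap at the heart of part (a): you never establish the reverse inequality $m-m_1\le r_1$, and your two proposed substitutes cannot deliver it. The paper obtains it (its inequality \eqref{above}) from geometry, not numerics: it first shows via the heart $\cA(b)$ that the torsion part $T(F_1)$ injects into $\iota_*F$ in $\Coh(X)$, hence is pushed forward from $C$ and satisfies $\rk(\iota^*T(F_1))=m_1$; then right-exactness of the underived pullback $\iota^*$ applied to $F_1\onto I$ gives $m=\rk_C(I)\le\rk(\iota^*F_1)\le m_1+r_1$, because a quotient of $F_1|_C$ cannot have rank on $C$ exceeding $\rk(\iota^*F_1)$. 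Without this, the slope and integrality constraints alone admit small-rank candidates that your ``Farey-mediant case analysis'' cannot exclude: for $(r,k)=(3,2)$, the class with $r_1=1$, $k_1=1$, $m=2$, $m_1=0$ satisfies $\mu_H(F_1^{\mathrm{tf}})=1\ge b_2$, $\mu_H(\cH^{-1}(F_2))=-1\le b_1$, $m-m_1=2\ge r_1$, and $\gcd(r,k)=1$, with $\Pi(F_1)=(1,\ch_2(F_1))$ lying on a line of slope $H^2(\tfrac kr-\tfrac12)$ above $\ell^*$ yet below $\Gamma$ --- so your auxiliary Bogomolov inequality on $F_1^{\mathrm{tf}}$ is also satisfied and rules nothing out. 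The Farey step is only available \emph{after} the two-sided identity $r_1=m-m_1$: the needed upper bound $\mu_H(F_1^{\mathrm{tf}})=\mu_H(\cH^{-1}(F_2))+\frac{m-m_1}{r_1}\le b_1+1$ uses it, and only then does \eqref{cond.for ellstar} force $\frac{k_1-m_1}{r_1}=\frac kr$ and hence $r_1=r$, $m=r$, $m_1=0$ (with $T(F_1)=0$ because a zero-dimensional torsion subsheaf cannot embed in the pure sheaf $\iota_*F$).

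Three secondary points. First, your justification of $\mu_H$-stability (``a destabilising subsheaf would correspond to a wall strictly above $\ell$'') is loose; the paper's argument is elementary integrality: $\mu_H^-(F_1)\ge b_2^*\ge\frac kr-\frac{1}{r^2(r+1)}$ together with $\mu_H(F_1)=\frac kr$, $\gcd(r,k)=1$ and ranks $<r$ leaves no room for a destabilising quotient. Second, in part (3) you assert ``the absence of further walls for $F_1$ or $F_2$ between $\ell$ and $(b=0,w\ll1)$'' from the numerical classes alone; this is exactly what needs proof, and the paper supplies the mechanism via Lemma \ref{lem-minimal} (no walls crossing the vertical lines $b=b_{F_1}^-$, $b=b_{F_2}^+$) combined with Lemma \ref{lem-walls above ell start in section 3} to propagate stability from $\ell$ down to $\sigma_{0,w}$. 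Third, in part (4) your chain $\ch_2(F_1)\le\frac{k^2H^2+2}{2r}-r\le s-r$ is backwards: \eqref{bound for s} gives $s\le\frac{k^2H^2+2}{2r}$, so the second inequality fails unless $\v^2=-2$; the correct deduction is that $\ch_2(F_1)+r$ is an integer at most $\frac{k^2H^2}{2r}+\frac1r$, while $s$ is the \emph{largest} such integer by \eqref{bound for s}, whence $\ch_2(F_1)\le s-r$. Your treatment of the equality case ($F\cong F_1|_C$ via a nonzero map of stable sheaves of equal class) matches the paper.
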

 \begin{proof}
The first part of the argument is similar to \cite[Proposition 4.2]{feyz:mukai-program}; we add it for completeness. Taking cohomology of the destabilising sequence gives a long exact sequence of coherent sheaves
 \begin{equation}\label{exact.12}
 0 \rightarrow \cH^{-1}(F_1) \rightarrow 0 \rightarrow \cH^{-1}(F_2) \rightarrow \cH^0(F_1) \xrightarrow{d_0} \iota_*F \xrightarrow{d_1} \cH^0(F_2) \rightarrow 0.
 \end{equation}
 Thus $\cH^{-1}(F_1) = 0$ and $\cH^0(F_1) \cong F_1$. Let $v(F_1) = \big(r',k'H,s'\big)$. If $r' =0$, then $F_1$ and $i_*F$ have the same $\nu\_{b,w}$-slope with respect to any $(b,w) \in U$, so $F_1$ will not destabilise $\iota_*F$ below the wall. Hence $r' >0$. The first step is to show that $r'=r$.
 
 Let $T(F_1)$ be the maximal torsion subsheaf of $F_1$ and $F_1/T(F_1)$ be its torsion-free part. Let $v\big(T(F_1)\big) = (0,\tilde{r}H,\tilde{s})$. Right-exactness of the underived pull-back $\iota^*$ applied to the short exact sequence 
 \begin{equation}\label{torsion-exact}
 T(F_1) \hookrightarrow F_1 \twoheadrightarrow F_1/T(F_1)
 \end{equation}
 implies that   
 \begin{equation}\label{bound for rank}
 \text{rank}(\iota^*F_1) \leq \text{rank}\big(\iota^*T(F_1)\big) + \text{rank}\big(\iota^*\big(F_1/T(F_1)\big)\big). 
 \end{equation}
 Take a point $(b,w) \in \ell \cap U$. By definition of $\cA(b)$, the sequence \eqref{torsion-exact} is an exact triangle in $\cA(b)$. Consider the composition of injections $s \colon T(F_1) \hookrightarrow F_1 \hookrightarrow \iota_*F$ in $\cA(b)$. Then the cokernel $c(s)$ in $\cA(b)$ is also a rank zero sheaf because if $\cH^{-1}(c(s)) \neq 0$, the it must be a torsion-free sheaf which is not possible. Therefore, $T(F_1)$ is a subsheaf of $\iota_*F$. Since $F$ is a vector bundle on an irreducible curve $C$, we get rank$(\iota^*T(F_1)) = \tilde{r}$. Thus \eqref{bound for rank} gives $\text{rank}(\iota^*F_1) \leq \tilde{r} + r'$. Let $v\big(\cH^0(F_2)\big) = \big(0,k''H,s''\big)$. The right-exactness of $\iota^*$ implies
 %The surjection $F_1 \twoheadrightarrow \ker {d_1}$ factors through $F_1 \twoheadrightarrow i_*i^*F_1 \twoheadrightarrow \ker d_1 $. 
 \begin{equation}\label{above}
r-k'' =  \text{rank}\big(\iota^*\ker d_1 \big) = \text{rank}\big(\iota^*\im d_0 \big)  \leq \text{rank}(\iota^*F_1) \overset{\eqref{bound for rank}}{\leq} \tilde{r}+r'.
 \end{equation} 
 Since $\ell$ lies above or on $\ell^*$, it intersects $\Gamma$ at two points with $b$-values $b_1<b_2$ such that $b_1 \leq b_1^*$ and $b_2^* \leq b_2$. Note that the point $(0, 0)$ lies below $\ell \cap U$ by Proposition \ref{prop.all bounds} (a). Therefore
 \begin{equation}\label{slopes}
 \mu^+(\cH^{-1}(F_2)) \leq b_1^* %< \frac{k-r}{r} + \frac{1}{r(r-1)} 
 \qquad \text{and} \qquad b_2^* \leq %\frac{k}{r} -\frac{1}{r(r-1)}  
  \mu^-(F_1).
 \end{equation}
 This implies
 \begin{align*}
 \dfrac{r-k''-\tilde{r}}{r'} = \mu_H\big(F_1/T(F_1)\big) -  \mu_H\big(\cH^{-1}(F_2)\big)
 \geq b_2^*-b_1^* %= 1- \frac{1}{r(r-1)}
 %\; \geq \;\;\;\;\;\;\;\;\;\;\;\;\;\;\;\;\;\;\;\;\;\;\;\;\\
 %\;\;\;\;\;\;\;\;\;\;\;\mu_H^{-}\big(F_1/T(F_1)\big) -  \mu_H^{+}\big(H^{-1}(F_2)\big)  
 \end{align*}
 and so
 \begin{equation}\label{below}
 r' \leq \frac{r-k''-\tilde{r}}{b_2^*-b_1^*} < (r-k''-\tilde{r})+1. %\frac{r}{b_2 -b_1} < r+1
 \end{equation}
 Here the right hand side inequality comes from
 \begin{equation*}
 \frac{r-k''-\tilde{r}}{r-k''-\tilde{r}+1}\leq \frac{r}{r+1} < 1-\frac{1}{r^2(r+1)} \overset{\eqref{cond.for ellstar}}{\leq} b_2^*-b_1^* \,.
 \end{equation*}
 Comparing \eqref{above} with \eqref{below} implies $r' = r-k'' -\tilde{r}$. %If $c' \leq r'$, we get
 Then \eqref{slopes} implies
 \begin{equation}\label{a.1}
 b_2^* \leq\ \mu_H(F_1/T(F_1)) = \frac{k'- \tilde{r}}{r-k''-\tilde{r}} %\leq \frac{c'}{r-c''} 
 = \frac{\frac{\ch_1(\cH^{-1}(F_2)).H}{H^2} + r -k'' -\tilde{r}}{r-k'' -\tilde{r}}\ \leq b_1^* + 1.
 \end{equation}
 Moreover, Proposition \ref{prop.all bounds} (a) and \eqref{cond.for ellstar} give 
 \begin{equation}\label{a.2}
 b_2^* < \frac{k}{r} < b_1^*+1 \qquad \text{and} \qquad b_1^*+1-b_2^* \leq \frac{2}{r^2(r+1)}.
 \end{equation}
 Thus comparing \eqref{a.1} and \eqref{a.2} shows 
 \begin{equation*}
\abs{\frac{k'-\tilde{r}}{r-k''-\tilde{r}} - \frac{k}{r}} <  \frac{1}{r(r+1)}
 \end{equation*}
 which is possible only if we have equality of two fractions  $\frac{k'-\tilde{r}}{r-k''-\tilde{r}} = \frac{k}{r}$. Since $\gcd(r, k) =1$, we get $k''= \tilde{r} = 0$, $(r', k') = (r, k)$ and $v(T(F_1)) = (0,0,\tilde{s})$. However $T(F_1)$ cannot be a skyscraper sheaf because $T(F_1)$ is a subsheaf of $\iota_*F$ as explained above. Thus $T(F_1) = 0$ and $F_1$ is torsion-free. Then \eqref{slopes} and \eqref{cond.for ellstar} give
 \begin{equation*}
 \frac{k}{r} -\frac{1}{r^2(r+1)} \leq b_2^* \leq \mu_H^-(F_1) \leq \mu_H(F_1) =\frac{k}{r}
 \end{equation*}
 which implies $F_1$ is $\mu_H$-stable. This completes the proof of (a).
 
% We also know $b_1^* < \frac{k-r}{r} + \frac{1}{r(r-1)}$, otherwise 
% \begin{equation*}
% b_2^* -b_1^* < \frac{k}{r} -\frac{k-r}{r} -\frac{1}{r(r-1)} = 1- \frac{1}{r(r-1)} 
% \end{equation*}
% which is not possible by \eqref{cond.1.for ellstar}. 
 Since $k''= 0$, we have $\ch_{\leq 1}(\cH^{-1}(F_2)) = (r, (k-r)H)$, so the first inequality in \eqref{slopes} implies 
 \begin{equation*}
  \frac{k-r}{r} \leq \mu_H^+(\cH^{-1}(F_2)) \leq b_1^* \leq  \frac{k-r}{r} + \frac{1}{r^2(r+1)}. 
 \end{equation*}
 Therefore $\cH^{-1}(F_2)$ is a $\mu_H$-stable sheaf as $\gcd(r, k-r) =1$. This completes the proof of part (b).  
 
 Since $F_1$ is $\mu_H$-stable, we get 
 \begin{equation*}
 v(F_1)^2 = k^2H^2 -2r(r+\ch_2(F_1)) \geq -2. 
 \end{equation*}
 Thus $\ch_2(F_1) \leq s-r$. We know $\ell$ passes through $\Pi(F_1)$, so if $\ch_2(F_1) =s-r$, then $\ell$ coincides with $\widetilde{\ell}$ and if $\ch_2(F_1) < s-r$, it lies below $\widetilde{\ell}$. % but it is above or on $\ell^*$ by our assumption. 

 We claim $F_1$ is $\sigma_{b=0, w}$-stable for any $w >0$. %First assume $k=1$. We know the wall $\ell \cap U$ intersects the vertical line $b=0$ as the origin lies below the wall, so $F_1$ is $\sigma_{b=0, w}$-semistable when $(0, w_0) \in \ell$. Then the claim follows from Lemma \ref{lem-minimal}. If $k>1$, then 
 Consider the vertical line $b= b_{F_1}^-$ as in Lemma \ref{lem-minimal}; it satisfies
 \begin{equation}\label{b}
 0 \leq \frac{k}{r} -\frac{1}{r} < b_{F_1}^- \leq \frac{k}{r} -\frac{1}{r(r-1)}\,. 
 \end{equation}

Let $\ell_{F_1}$ be the line passing through $\Pi(F_1)$ and the origin. The line segment $\ell_{F_1} \cap U$ lies above $\ell_1 \cap U$ considered in Lemma \ref{lem-walls above ell start in section 3}, see Figure \ref{fig.vertical line}. Thus combining \eqref{b} with Lemma \ref{lem-walls above ell start in section 3} implies that $\ell_{F_1} \cap U$ intersects the vertical line $b=b^-_{F_1}$ at a point in the closure $\overline{U}$. We know the wall $\ell$ lies above $\ell_{F_1}$ as the origin point $(0, 0)$ lies below $\ell^*$ and so $\ell$ by Proposition \ref{prop.all bounds}(a). Thus $\ell$ intersects the vertical line $b= b_{F_1}^-$ at a point $(b_{F_1}^{-}, w)$ inside $U$, so semistability of $F_1$ along the wall and Lemma \ref{lem-minimal} imply that $F_1$ is $\sigma_{b=b_{F_1}^-}$-stable for any $w > \Gamma(b_{F_1}^-)$. Hence the structure of walls (which all pass through $\Pi(F_1)$) implies that $F_1$ is $\sigma_{b,w}$-stable for any $(b,w)$ above $\ell_{F_1}$ when $b < \frac{k}{r}$, so in particular, it is $\sigma_{b=0, w}$-stable for any $w >0$. 

\begin{figure}[h]
	\begin{centering}
		\definecolor{zzttqq}{rgb}{0.27,0.27,0.27}
		\definecolor{qqqqff}{rgb}{0.33,0.33,0.33}
		\definecolor{uququq}{rgb}{0.25,0.25,0.25}
		\definecolor{xdxdff}{rgb}{0.66,0.66,0.66}
		
		\begin{tikzpicture}[line cap=round,line join=round,>=triangle 45,x=1.0cm,y=0.9cm]
		
		%\draw[->,color=black] (-4,0) -- (4,0);
		\draw  (4, 0) node [right ] {$b,\,\frac{\ch_1\!.\;H}{\ch_0H^2}$};

		%\fill[gray!40!white, draw=black] (-3,4) parabola (3,4);

		\fill [fill=gray!40!white] (0,-1) parabola (2.8,4) parabola [bend at end] (-2.8,4) parabola [bend at end] (0,-1);
		
		\draw[->,color=black] (-4,0) -- (4,0);
		\draw  (0,-1) parabola (2.9,4.27); 
		\draw  (0,-1) parabola (-2.9,4.27); 
		\draw  (3 , 4.2) node [above] {$\Gamma$};

		\draw[->,color=black] (0,-1.5) -- (0,4.7);
		\draw  (1, 4.1) node [above ] {$w,\,\frac{\ch_2}{\ch_0}$};
		
		\draw [dashed, color=black] (0, 0) -- (3.5, 2);
		\draw [dashed, color=black] (0, 0) -- (3.5, .95);

		\draw [dashed, color=black] (2.3, 0) -- (2.3, 4);
		\draw [dashed, color=black] (1., -1) -- (1., 4);
		\draw  (1.2, -1) node[below] {$b= b_{F_1}^-$};
		
		\draw [color=black] (-3.7, 3.4) -- (3.2, 1);
		\draw [color=black] (-3.7, 2.7) -- (3.2, .3);
		
		\draw [color=white] (0, 0) -- (0, -1);
		
		\draw  (3.5, .95) node[right] {$\ell_1$};
		\draw  (3.5, 2) node[right] {$\ell_{F_1}$};
		
		\draw  (-3.7, 3.45) node[left] {$\ell$};
		\draw  (-3.7, 2.75) node[left] {$\ell^*$};
		
		\draw  (2.3, 0) node[below] {$\frac{k}{r}$};

		\draw  (-1, 3) node [above] {\Large{$U$}};
		\draw (-3., 2.9) node [left] {$\ell_2$};
		\draw  (-3.4, 4.1) node [left] {$\ell_{F_2}$};
		
		\draw  (-2.7, 0) node[below] {$\frac{k-r}{r}$};
		
		\draw  (3., 2) node[above] {\small $\Pi(F_1)$};
		\draw[->,color=black] (2.3, 1.3) -- (2.8, 2.15);
		
	     \draw  (-3.55, 2) node[below] {\small $\Pi(F_2)$};
		\draw[->,color=black] (-2.7, 3.05) -- (-3.5, 2);
		
		\draw [dashed, color=black] (-2.7, 0) -- (-2.7, 4);
		\draw [dashed, color=black] (0, 0) -- (-3.1, 2.7);
		\draw [dashed, color=black] (0, 0) -- (-3.5, 4);
			\draw [dashed, color=black] (-2, -1) -- (-2, 4);
		\draw  (-1.85, -1) node[below] {$b= b_{F_2}^+$};

		\begin{scriptsize}
		\fill (2.3, 1.3) circle (1.5pt);
		\fill[color=white] (0, 0) circle (2pt);
		\fill (2.3,.63) circle (1.5pt);
		\fill (-2.7,3.05) circle (1.5pt);
		\fill (-2.7,2.35) circle (1.5pt);
		%\fill (1,3) circle (2pt);
		%\draw  (1.2, 2.96) node [below] {$(b,w)$};
		
		\end{scriptsize}
		
		\end{tikzpicture}
		
		\caption{The vertical lines $b=b_{F_1}^-$ and $b= b_{F_2}^+$}
		
		\label{fig.vertical line}
		
	\end{centering}
\end{figure}
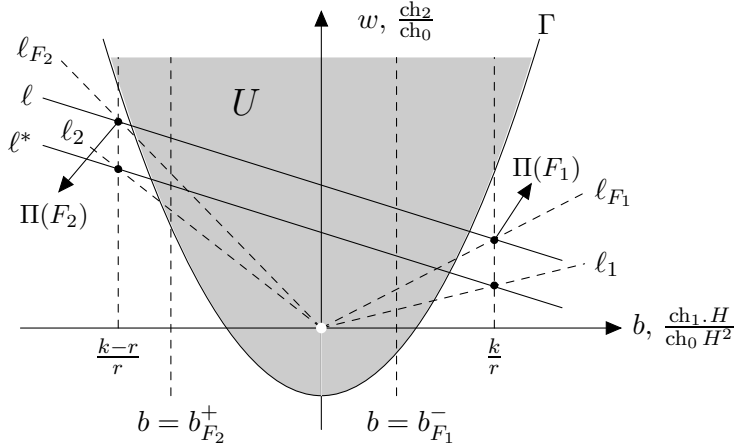

Similarly, we know 
\begin{equation*}
\frac{k-r}{r}+\frac{1}{r(r-1)} \leq\ b_{F_2}^+\ \leq \frac{k-r}{r} + \frac{1}{r} \leq 0. 
\end{equation*}
Thus by the same argument as for $F_1$, Lemma \ref{lem-walls above ell start in section 3} implies that the line $\ell_{F_2}$ passing through $\Pi(F_2)$ and the origin intersects the vertical line $b = b_{F_2}^+$ at a point inside $U$, see Figure \ref{fig.vertical line}. Thus by Lemma \ref{lem-minimal}, $F_2$ is also $\sigma_{b=0,w}$-stable for any $w >0$. We know $\nu\_{b,w}$-slope of $F_1$ is bigger than $F_2$ for $(b,w)$ below the wall $\ell$, thus the sequence $F_1 \rightarrow \iota_* F \rightarrow F_2$ is the HN filtration of $\iota_*F$ with respect to $\sigma_{0, w}$ for $0 < w \ll 1$ as claimed in part (c). 
 
  In case of equality $\ch_3(F_1) = s-r$, Proposition \ref{prop.restriction} implies that $\iota_*F_1|_C$ is a stable sheaf. We know the non-zero morphism $d_0$ in the long exact sequence \eqref{exact.12} factors via the morphism $d_0' \colon i_*F_1|_C \rightarrow i_*F$. The sheaves $i_*F_1|_C$ and $i_*F$ are both stable and have the same Mukai vector, hence $d_0'$ is an isomorphism. This completes the proof of part (d). 
\end{proof}
Proposition \ref{prop-wall} only describes walls for class $v(\iota_*F)$ which are above $\ell^*$. Instead of classifying walls below $\ell^*$, we jump over the Brill-Noether region (neighbourhood of $\Pi(\cO_X)$) and find an upper bound for the number of global sections of stable vector bundles on the curve $C$ of rank $r$ and degree $kH^2$. We first recall definition of the Harder-Narasimhan polygon.   
\begin{Def}
	Given a stability condition $\sigma_{(b,w)}$ and an object $E \in \mathcal{A}(b)$, the Harder-Narasimhan polygon of $E$ with respect to $(b,w)$ %HN$^{\sigma_{(b,w)}}(E)$ 
	is the convex hull of the points $Z_{b,w}(E')$ for all subobjects $E '\subset E$ of $E$. 
\end{Def}
If the Harder-Narasimhan filtration of $E$ is the sequence
\begin{equation*}
0 = {E}_0 \subset {E}_1 \subset .... \subset {E}_{n-1} \subset {E}_n =E,
\end{equation*}
then the points $\left\{ p_i \coloneqq Z_{b,w}({E}_i) \right\}_{i=0}^{n}$ are the extremal points of the Harder-Narasimhan polygon of $E$ on the left side of the line segment $\overline{oZ_{b,w}(E)}$, see Figure \ref{polygon figure.1}. 
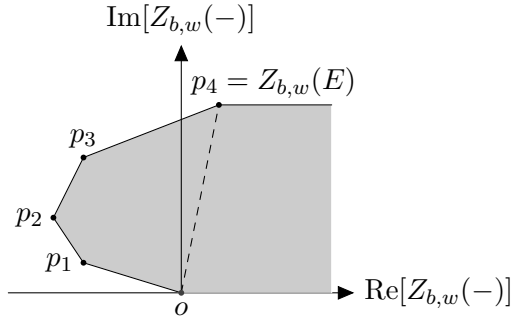
\begin{figure} [h]
	\begin{centering}
		\definecolor{zzttqq}{rgb}{0.27,0.27,0.27}
		\definecolor{qqqqff}{rgb}{0.33,0.33,0.33}
		\definecolor{uququq}{rgb}{0.25,0.25,0.25}
		\definecolor{xdxdff}{rgb}{0.66,0.66,0.66}
		
		\begin{tikzpicture}[line cap=round,line join=round,>=triangle 45,x=1.0cm,y=1.0cm]
		
		\draw[->,color=black] (-2.3,0) -- (2.3,0);
		
		\filldraw[fill=gray!40!white, draw=white] (0,0) --(-1.3,.4)--(-1.7,1)--(-1.3,1.8)--(.5,2.5)--(2,2.5)--(2,0);

		\draw [ color=black] (0,0)--(-1.3,.4);
		\draw [color=black] (-1.3,.4)--(-1.7,1);
		\draw [color=black] (-1.7,1)--(-1.3,1.8);
		\draw [color=black] (-1.3,1.8)--(.5,2.5);
		\draw [color=black] (2,2.5) -- (.5,2.5);
		\draw [color=black, dashed] (0,0) -- (.5,2.5);
		
		\draw[->,color=black] (0,0) -- (0,3.3);

		\draw (2.3,0) node [right] {Re$[Z_{b,w}(-)]$};
		\draw (0,3.3) node [above] {Im$[Z_{b,w}(-)]$};
		\draw (0,0) node [below] {$o$};
		\draw (-1.3,.4) node [left] {$p_1$};
		\draw (-1.7,1) node [left] {$p_2$};
		\draw (-1.3,1.8) node [above] {$p_3$};
		\draw (0,2.8) node [right] {$p_4 = Z_{b,w}(E)$};

		\begin{scriptsize}
		
		\fill [color=black] (-1.3,.4) circle (1.1pt);
		\fill [color=black] (-1.7,1) circle (1.1pt);
		\fill [color=black] (-1.3,1.8) circle (1.1pt);
		\fill [color=black] (.5,2.5) circle (1.1pt);
		\fill [color=uququq] (0,0) circle (1.1pt);
		
		\end{scriptsize}
		
		\end{tikzpicture}
		
		\caption{HN polygon} 
		%in the stability condition $\sigma_{(b,w)}$
		
		\label{polygon figure.1}
		
	\end{centering}
	
\end{figure}

We want to consider the limit of HN polygon when $(b,w)$ is close to the origin. Define the function $\overline{Z} \colon K(X) \rightarrow \mathbb{C}$ as $$\overline{Z}(E) \coloneqq \lim\limits_{w \rightarrow 0} Z_{b =0, w}(E) =  -\ch_2(E) \,+\, i\, \frac{\ch_1(E).H}{H^2}.$$ 
Take an object $E \in \mathcal{A}(b=0)$ which has no subobject $E' \subset E$ in $\mathcal{A}(0)$ with ch$_1(E') = 0$, i.e. $\nu_{b,w}^+(E) < +\infty$. \cite[Proposition 3.4]{feyz:mukai-program} implies that there exists $w^* > 0$ such that the Harder-Narasimhan filtration of $E$ is a fixed sequence
\begin{equation}\label{HN}
0 = E_{0} \subset E_{1} \subset .... \subset E_{n-1} \subset  E_n=E,
\end{equation}
with respect to all stability conditions $\sigma_{0,w}$ where $0< w < w^*$. We define $\p_E$ to be the polygon with extremal points $p_i = \overline{Z}(E_i)$ and sides $\overline{p_0p_n}$ and $\overline{p_ip_{i+1}}$ for $i \in [0, n-1]$. Since $P_E$ is the limit of HN polygon when $w \rightarrow 0$, it is also a convex polygon.

\begin{Lem}\label{lem-polygon}
	Let $F$ be a rank $r$-semistable vector bundle on $C$ with degree $kH^2$ as before. The polygon $\p_{\iota_*F}$ is contained in the triangle $\triangle oz_1z_2$ where 
	\begin{equation*}
	z_1 \coloneqq \overline{Z}(\v) = -s+ r\ +i\, k \qquad \text{and} \qquad z_2 \coloneqq \overline{Z}(\iota_*F) = H^2\left(\frac{r}{2} - k\right) +\ i\, r. 
	\end{equation*}
	If $\p_{\iota_*F}$ coincides with $\triangle oz_1z_2$, then $F = E|_C$ for a vector bundle $E \in M_X(\v)$. 
\end{Lem}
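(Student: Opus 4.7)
The plan is to use Proposition \ref{prop-wall} to identify the Harder--Narasimhan filtration of $\iota_*F$ at $\sigma_{0,w}$ for $0 < w \ll 1$, then check that each polygon vertex lies in $\triangle oz_1z_2$, and finally handle the equality case using Proposition \ref{prop-wall}(d).

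First I would argue that either $\iota_*F$ is $\sigma_{0,w}$-semistable for $0 < w \ll 1$---in which case $\p_{\iota_*F}$ degenerates to the chord $\overline{oz_2}$ and the containment is trivial---or the first wall of $\iota_*F$ that one meets descending the vertical ray $b = 0$ from the large volume limit lies at or above $\ell^*$. In the latter case Proposition \ref{prop-wall}(c) tells us the HN filtration at $\sigma_{0,w}$ for $0 < w \ll 1$ is the two-step sequence $0 \subset F_1 \subset \iota_*F$, so $\p_{\iota_*F}$ has exactly three vertices $o$, $p_1 \coloneqq \overline{Z}(F_1)$, and $z_2$.

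Next I would locate $p_1$ inside the triangle. By Proposition \ref{prop-wall}(a), $F_1$ is $\mu_H$-stable with $\ch_{\leq 1}(F_1) = (r, kH)$, so $p_1 = -\ch_2(F_1) + i\,k$ lies on the horizontal line of imaginary part $k$. The Mukai inequality $v(F_1)^2 = k^2 H^2 - 2r\bigl(r + \ch_2(F_1)\bigr) \geq -2$ combined with \eqref{assumption} forces $\ch_2(F_1) \leq s - r$, so $\mathrm{Re}(p_1) \geq r - s = \mathrm{Re}(z_1)$. Convexity of $\p_{\iota_*F}$ (with $o$ and $z_2$ as extremal vertices) simultaneously keeps $p_1$ on or to the left of $\overline{oz_2}$. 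Since the horizontal slice of $\triangle oz_1z_2$ at height $k$ is exactly the segment from $z_1$ rightwards to its meet with $\overline{oz_2}$, these two bounds together place $p_1$ inside the triangle, and hence $\p_{\iota_*F} \subseteq \triangle oz_1z_2$.

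For the equality case, if $\p_{\iota_*F} = \triangle oz_1z_2$ then $z_1$ must appear as a vertex of the polygon, so $p_1 = z_1$; this forces $\ch_2(F_1) = s - r$ and $v(F_1) = (r, kH, s) = \v$. Since $F_1$ is $\mu_H$-stable with Mukai vector $\v$, it is $H$-Gieseker stable and so lies in $M_X(\v)$. Proposition \ref{prop-wall}(d) applied in the limiting case $\ell = \widetilde{\ell}$ then gives $F = F_1|_C$, so setting $E \coloneqq F_1$ yields the required factorisation $F = E|_C$. The main technical obstacle in this plan is ensuring that walls of $\iota_*F$ below $\ell^*$ do not introduce additional HN factors at $\sigma_{0, w\ll 1}$; this is precisely what Proposition \ref{prop-wall}(c) secures by showing $F_1$ and $F_2$ remain $\sigma_{0,w}$-stable all the way down as $w \to 0^+$.
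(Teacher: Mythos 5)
Your reduction to a dichotomy is where the proof breaks. You assert that either $\iota_*F$ is $\sigma_{0,w}$-semistable for $0<w\ll 1$, or the first wall met descending the ray $b=0$ lies at or above $\ell^*$. Nothing supports this: Proposition \ref{prop-wall} only describes those walls that happen to lie above or on $\ell^*$; it does not say that all walls, or even the highest wall, for $v(\iota_*F)$ are of this kind, and the paper explicitly declines to classify walls below $\ell^*$ (``Instead of classifying walls below $\ell^*$, we jump over the Brill-Noether region\dots''). If the highest wall crossing $b=0$ lies strictly below $\ell^*$, then $\iota_*F$ is unstable at $\sigma_{0,w\ll 1}$, its HN filtration can have many factors, and none of the structural conclusions you import from Proposition \ref{prop-wall} (two-step filtration, $\ch_{\leq 1}(F_1)=(r,kH)$, the Mukai-vector bound on $\ch_2(F_1)$) is available, so your vertex-location argument has no starting point. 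Your closing remark that Proposition \ref{prop-wall}(c) ``secures'' the absence of extra HN factors inverts its logic: part (c) is a conclusion about a destabilising sequence along a wall already assumed to lie at or above $\ell^*$, not a statement ruling out lower walls.

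This missing case is exactly what the paper's proof handles, and it is the heart of the lemma. One takes the genuine HN filtration $0=E_0\subset E_1\subset\cdots\subset E_n=\iota_*F$ at $\sigma_{0,w\ll1}$, reduces containment by convexity to the two slope inequalities \eqref{c} for $E_1$ and $\iota_*F/E_{n-1}$, shows $\ch_0(E_1)>0$ and $\ch_0(\iota_*F/E_{n-1})<0$, and then argues geometrically in the no-high-wall case: $\iota_*F$ is $\sigma_{b,w}$-semistable above $\ell^*$, so the line $\ell$ through $\Pi(E_1)$ parallel to $\ell^*$, below which $E_1$ destabilises $\iota_*F$, must itself lie below $\ell^*$; combined with the fact that $\Pi(E_1)$ cannot lie inside $U$, this traps $\Pi(E_1)$ below the line through the origin and $(b_2^*,\Gamma(b_2^*))$, and Proposition \ref{prop.all bounds}(c), via $b_2^*<b_2^{\v}$, places $\Pi(E_1)$ below $\ell_{\v}$, which is the first inequality of \eqref{c}; the mirror argument with Proposition \ref{prop.all bounds}(e) handles the quotient. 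Your equality case inherits the same defect: you must first prove that equality forces a wall at or above $\ell^*$ (the paper deduces this from $\Pi(E_1)$ lying on $\ell_{\v}$ but not inside $U$), and only then may you invoke Proposition \ref{prop-wall}(d). The parts of your argument covering the case where a wall does lie at or above $\ell^*$ (locating $p_1$ via $v(F_1)^2\geq-2$ and $\ch_2(F_1)\leq s-r$, and taking $E=F_1$ when $p_1=z_1$) do agree with the paper.
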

\begin{figure} [h]
	\begin{centering}
		\definecolor{zzttqq}{rgb}{0.27,0.27,0.27}
		\definecolor{qqqqff}{rgb}{0.33,0.33,0.33}
		\definecolor{uququq}{rgb}{0.25,0.25,0.25}
		\definecolor{xdxdff}{rgb}{0.66,0.66,0.66}
		
		\begin{tikzpicture}[line cap=round,line join=round,>=triangle 45,x=1.0cm,y=1.0cm]

		\draw[->,color=black] (0,0) -- (0,3.5);
		\draw[->,color=black] (-3,0) -- (3,0);
		\draw[color=black] (0,0) -- (2.5,3.3);
		\draw[color=black] (0,0) -- (-1.5,1);
		\draw[color=black] (-1.5,1) -- (2.5,3.3);
		
		\draw[color=black] (0,0) -- (-.5,1);
		\draw[color=black] (.5,2) -- (2.5,3.3);
		\draw[color=black] (.5,2) -- (-.5,1);
		
		\draw (0,0) node [below] {$o$};
		\draw (3,0) node [right] {Re$[\,\overline{Z}\,] = -\ch_2$};
		\draw (0,3.5) node [above] {Im$[\,\overline{Z}\,] = \frac{H\ch_1}{H^2}$};
		\draw (-1.5,1) node [left] {$z_1$};
		\draw (2.6,3.3) node [above] {$z_2$};
		\draw (-.5,1) node [left] {$p_1$};
		\draw (0.4,2) node [above] {$p_2$};
		
		\begin{scriptsize}
		
		\fill [color=black] (0,0) circle (1.1pt);
		%\fill [color=black] (0,3.5) circle (1.1pt);
		\fill [color=black] (2.5,3.3) circle (1.1pt);
		\fill [color=black] (-1.5,1) circle (1.1pt);
		\fill [color=black] (-.5,1) circle (1.1pt);
		\fill [color=black] (.5,2) circle (1.1pt);

		\end{scriptsize}
		
		\end{tikzpicture}
		
		\caption{The polygon $P_{i_*F}$ is inside the triangle $T$}
		
		\label{wall.4}
		
	\end{centering}
	
\end{figure}
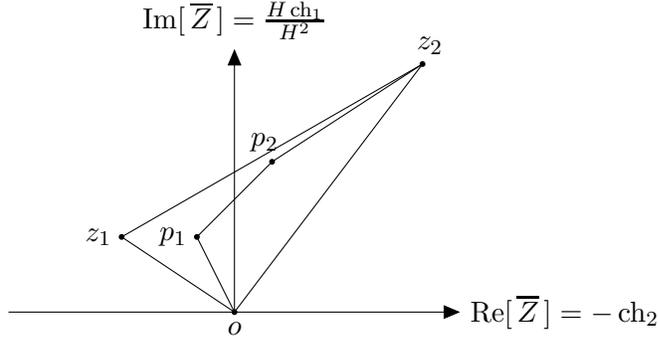	
\begin{proof}
	Consider the HN filtration \eqref{HN} for $E = \iota_* F$ with respect to $\sigma_{0, w}$ where $0 < w \ll 1$. Since $P_{\iota_* F}$ is a convex polygon, to prove the first claim we only need to show  
	\begin{equation*}
	-\dfrac{\text{Re}[\overline{Z}( E_1)]}{\text{Im}[\overline{Z}(E_1)]} \leq -\dfrac{\text{Re}[z_1]}{\text{Im}[z_1]} \qquad \text{and} \qquad
	-\dfrac{\text{Re}[z_2-z_1]}{\text{Im}[z_2-z_1]} \leq -\dfrac{\text{Re}[\overline{Z}(\iota_*F/E_{n-1})]}{\text{Im}[\overline{Z}(\iota_*F/E_{n-1})]},
	\end{equation*}
	i.e. 
	\begin{equation}\label{c}
	\frac{\ch_2(E_1)H^2}{\ch_1(E_1)H} \leq \frac{\ch_2(\v)H^2}{\ch_1(\v)H} \qquad \text{and} \qquad \frac{\ch_2(\v(-H))H^2}{\ch_1(\v(-H))H} \leq \frac{\ch_2(\iota_*F/E_{n-1})H^2}{\ch_1(\iota_*F/E_{n-1})H}. 
	\end{equation} 
	We first show that $\ch_0(E_1) >0$ and $\ch_0(\iota_*F/E_{n-1}) <0$. Taking cohomology from the exact sequence $E_1 \hookrightarrow \iota_* F \twoheadrightarrow \iota_*F/E_1$ in $\cA(b=0)$ implies that $E_1$ is a sheaf. If $E_1$ is of rank zero, then $\cH^{-1}(\iota_*F/E_1)$ is zero as it is must be a torsion-free sheaf, so $F_1$ is a subsheaf of $\iota_*F$ of bigger $\nu\_{0, w}$-slope which is not possible as $F$ is semistable\footnote{Recall that $\nu\_{b,w}$-slope of any rank zero sheaf $E$ is equal to $\frac{\ch_2(E)H^2}{\ch_1(E)H^2}$ which is independent of $(b,w)$.}, thus $\ch_0(E_1) > 0$. Similarly, the exact triangle $E_{n-1} \hookrightarrow \iota_*F \twoheadrightarrow \iota_*F/E_{n-1}$ implies $\ch_0(E_{n-1}) >0$ and so $\iota_*F/E_{n-1}$ is of negative rank. Since $E_1, \iota_*F/E_{n-1} \in \cA(b=0)$ we get
	\begin{equation*}
	0 \leq \frac{1}{\ch_0(E_1)}\text{Im}\left[Z_{0, w}(E_1)\right] = \mu(E_1) \qquad \text{and} \qquad  \frac{1}{\ch_0(\iota_*F/E_{n-1})}\text{Im}\left[Z_{0, w}(\iota_*F/E_{n-1})\right] \leq 0
	\end{equation*}
	Therefore \eqref{c} is equivalent to the claim that $\Pi(E_1)$ lies below or on the line $\ell_{\v}$ and $\Pi(\iota_*F/E_{n-1})$ lies below or on $\ell_{\v(-H)}$, see Figure \ref{projetcion.2}.
	\begin{figure}[h]
		\begin{centering}
			\definecolor{zzttqq}{rgb}{0.27,0.27,0.27}
			\definecolor{qqqqff}{rgb}{0.33,0.33,0.33}
			\definecolor{uququq}{rgb}{0.25,0.25,0.25}
			\definecolor{xdxdff}{rgb}{0.66,0.66,0.66}
			
			\begin{tikzpicture}[line cap=round,line join=round,>=triangle 45,x=1.0cm,y=0.9cm]
			
			\draw[->,color=black] (-4,0) -- (4,0);
			\draw  (4, 0) node [right ] {$b,\,\frac{H\ch_1}{H^2\ch_0}$};

			%\fill[gray!40!white, draw=black] (-3,4) parabola (3,4);

			%\fill [fill=gray!40!white] (0,0) parabola (3,4) parabola [bend at end] (-3,4) parabola [bend at end] (0,0);
			
			\draw  (0,-1) parabola (3.1,4.27); 
			\draw  (0,-1) parabola (-3.1,4.27); 
			\draw  (3.1 , 4.2) node [right] {$\Gamma$};

			\draw[->,color=black] (0,-1.5) -- (0,4.7);
			\draw  (0, 4.5) node [above] {$w,\,\frac{\ch_2}{\ch_0}$};
			
			%\draw [dashed, color=black] (-2.3,1.5) -- (-2.3,0);
			%\draw [dashed, color=black] (-2.3, 1.5) -- (0, 1.5);
			\draw [color=black] (-3.8, 3) -- (3.5, 1);
			\draw [color=black] (-1, -.83) -- (3.5, 2.9);
			%\draw [color=black] (-3.8, 4) -- (3.5, 2);
			\draw [color=black] (-3.8, 4.3) -- (1, -1.1);
			
			\draw  (2.7, 2.15) node[right] {\small{$\Pi(\v)$}};
			\draw  (-3.35, 3.8) node[left] {\small{$\Pi(\v(-H))$}};
			\draw  (3.5, 1) node[right] {{$\ell^*$}};
			\draw  (2, 1) node[right] {\small{$b_2^*$}};
			\draw  (-2.55, 2.5) node[left] {\small{$b_1^*$}};
			
			\draw  (3.5, 2.9) node[right] {{$\ell_{\v}$}};
			\draw  (-3.9, 4.3) node[above] {{$\ell_{\v(-H)}$}};
			%\draw  (-1, 3) node [above] {\Large{$U$}};
			%\draw  (0, 1.5) node [right] {$\frac{\ch_2(E).H}{\ch_0(E)H^3}$};
			%\draw  (-2.3 , 0) node [below] {$\frac{\ch_1(E).H^2}{\ch_0(E)H^3}$};
			\begin{scriptsize}
			\fill (2.7, 2.23) circle (1.5pt);
			\fill (-3.45,3.9) circle (1.5pt);
			\fill (2.1,1.38) circle (1.5pt);
			\fill (-2.6,2.68) circle (1.5pt);
			%\fill (1,3) circle (2pt);
			%\draw  (1.2, 2.96) node [below] {$(b,w)$};
			
			\end{scriptsize}
			
			\end{tikzpicture}
			
			\caption{Walls for $\iota_*F$}
			
			\label{projetcion.2}
			
		\end{centering}
	\end{figure}
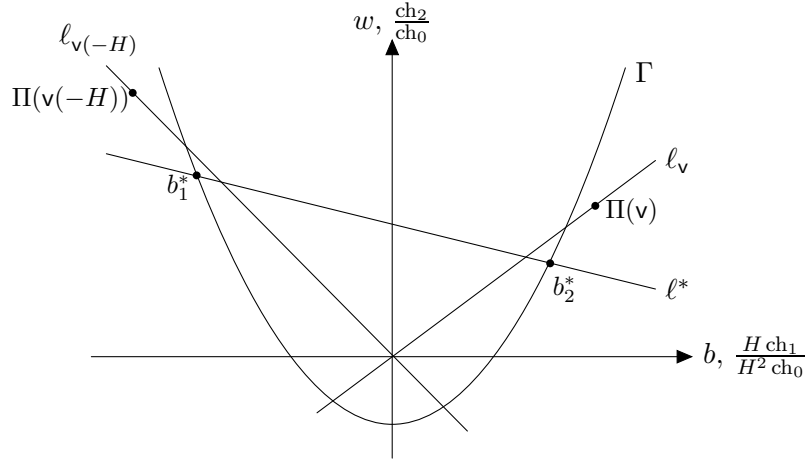
	
	If there exists a wall for $\iota_*F$ above or on $\ell^*$, Proposition \ref{prop-wall} gives a complete description of the HN factors of $\iota_*F$ with respect to $\sigma_{0, w}$ for $0 < w \ll 1$ and so the claim follows. Thus we may assume there is no wall above or on $\ell^*$ for $\iota_*F$ and so $\iota_*F$ is $\sigma_{b,w}$-semistable for any $(b,w) \in U$ above $\ell^*$. 
	
	Consider the line $\ell$ parallel to $\ell^*$ passing through $\Pi(E_1)$. We know $\iota_*F$ and $E_1$ have the same $\nu\_{b,w}$-slope for $(b,w) \in \ell \cap U$ and $E_1$ has bigger slope then $\iota_*F$ below the line as the slope function $\nu\_{b,w}(E_1)$ is a decreasing function with respect to $w$. Thus $\iota_*F$ is $\sigma_{b,w}$-unstable below $\ell$, hence $\ell$ and so $\Pi(E_1)$ lie below $\ell^*$. On the other hand, $\sigma_{0, w}$-semistability of $E_1$ implies that $\Pi(E_1)$ does not lie above $\Gamma$, so $\Pi(E_1)$ lies below the line passing through the origin and $(b_2^*, \Gamma(b_2^*))$. Then Proposition \ref{prop.all bounds}(c) implies $\Pi(E_1)$ lies below $\ell_{\v}$ as claimed. A similar argument shows that $\Pi(\iota_*F/E_{n-1})$ lies below the line passing through the origin and $(b_1^*, \Gamma(b_1^*))$ and so it lies below $\ell_{\v(-H)}$ by Proposition \ref{prop.all bounds}(e). This completes the proof of \eqref{c} and so $P_{\iota_*F}$ is contained in the triangle $\triangle oz_1z_2$. 
	
	%We claim the slope of the line segment passing through $\Pi(E_1)$ and the origin 
	%is smaller than the line segment $\ell_2$ connecting the origin and $(b_2^*, \Gamma(b_2^*))$ (which is the intersection point of $\ell^*$ and $\Gamma$). Otherwise, $\Pi(E_1)$ lies inside or on the triangle with vertices $(b_i^*, \Gamma(b_i^*))$ for $i=1, 2$ and the origin. Thus convexity of $U$ implies that $\Pi(E_1)$ lies inside $U$ which is not possible. We know $b_2^* < b_2^v$, so the line segment $\ell_2$ lies below $\ell_v$, so $\Pi(E_1)$ lies below $\ell_v$ as required. By considering the line segment $\ell_1$ connecting $(b_1^*, \Gamma(b_1^*))$ to the origin and applying the same argument, one can show that $\Pi(\iota_*F/E_{n-1})$ lies below $\ell_{v(-H)}$. This completes the proof of the first claim.   

    If HN polygon coincides with the triangle $\triangle oz_1z_2$, then we have equality in both inequalities of \eqref{c}, i.e. $\Pi(E_1)$ lies on $\ell_\v$ and $\Pi(\iota_*F/E_{n-1})$ lies on $\ell_{\v(-H)}$. Since $\Pi(E_1)$ cannot lie inside $U$, it lies above or on $\ell^*$. Thus there exits a wall for $\iota_*F$ above or on $\ell^*$ %it cannot be on the line segment connecting the origin to $(b_2^v, \Gamma(b_2^v))$. Thus the line passing through $\Pi(E_1)$ of slope $H^2(\frac{k}{r} -\frac{1}{2})$ lies above $\ell^*$. This is the line where $E_1$ and $\iota_*F$ have the same phase and $E_1$ has bigger phase below this line. Thus there is a wall for $\iota_*F$ above $\ell^*$ 
    and so Proposition \ref{prop-wall} implies $v(F_1) = \v$ and $F = E_1|_C$ as claimed.   
\end{proof}
Let $p_i = \overline{Z}(E_i)$ be the extremal points of $\p_{\iota_*F}$ where $E_i$'s are the factor in the HN filtration of $E = \iota_*F$ as in \eqref{HN}. Then \cite[Proposition 3.4]{feyz:mukai-program} implies that 
\begin{equation}\label{bound}
h^0(X,\iota_*F) \leq \dfrac{\chi(X, \iota_*F)}{2}  + \dfrac{1}{2} \sum_{i=1}^{n} \lVert \overline{p_ip_{i-1}} \rVert
\end{equation}  
where $\lVert . \rVert$ is the non-standard norm defined in \cite[Equation (3.2)]{feyz:mukai-program}, i.e. $\lVert x+ iy \rVert = \sqrt{ x^2 + (2H^2+4) y^2}$.  

%we can control the number of global sections of $\iota_*F$ using the length of the HN polygon. 

\begin{Prop}\label{prop-polygon}
	Let $F$ be a rank $r$-semistable vector bundle on $C$ with degree $kH^2$ as before. If the polygon $\p_{\iota_*F}$ lies strictly inside $\triangle oz_1z_2$, then $h^0(C, F) < r+s$.  
\end{Prop}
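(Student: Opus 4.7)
The main tool is the cohomology bound \eqref{bound}, which together with $h^0(C,F) = h^0(X, \iota_*F)$ gives
\[h^0(C, F) \leq \frac{\chi(X, \iota_*F)}{2} + \frac{1}{2}\sum_{i=1}^n \lVert\overline{p_{i-1}p_i}\rVert.\]
By Lemma \ref{lem-polygon}, the polygon $P_{\iota_*F}$ is contained in $\triangle oz_1z_2$; after rescaling the imaginary axis by $\sqrt{2H^2+4}$ the norm $\lVert\cdot\rVert$ becomes Euclidean, so monotonicity of convex perimeter under set inclusion yields $\sum_{i=1}^n \lVert\overline{p_{i-1}p_i}\rVert \leq \lVert z_1\rVert + \lVert z_2-z_1\rVert$.

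The argument then splits into two estimates:
\begin{align*}
\text{(i)}\quad & \frac{\chi(X, \iota_*F)}{2} + \frac{\lVert z_1\rVert+\lVert z_2-z_1\rVert}{2} \;<\; r+s+1, \\
\text{(ii)}\quad & P_{\iota_*F} \subsetneq \triangle oz_1z_2 \ \Longrightarrow\ \sum_{i=1}^n \lVert\overline{p_{i-1}p_i}\rVert \;\leq\; \lVert z_1\rVert + \lVert z_2-z_1\rVert - 2.
\end{align*}
Estimate (i) is an algebraic verification from \eqref{assumption} and \eqref{bound for s}, closely parallel to the computation in Step 1 of the proof of Proposition \ref{prop.restriction} showing $\lVert z_1\rVert < r+s+2$. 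Combining (i) and (ii) with \eqref{bound} yields
\[h^0(C,F) \;\leq\; \frac{\chi(X, \iota_*F)}{2} + \frac{1}{2}\sum_{i=1}^n \lVert\overline{p_{i-1}p_i}\rVert \;\leq\; \frac{\chi(X, \iota_*F)}{2} + \frac{\lVert z_1\rVert+\lVert z_2-z_1\rVert}{2} - 1 \;<\; r+s,\]
as required.

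For estimate (ii), I would case-split on whether $\iota_*F$ has a wall above or on $\ell^*$. In the wall case, Proposition \ref{prop-wall} provides an explicit two-step HN filtration $F_1 \hookrightarrow \iota_*F \twoheadrightarrow F_2$, and strict containment $P_{\iota_*F} \subsetneq \triangle$ forces $\ch_2(F_1) = s-r-n$ with $n \geq 1$, since $n=0$ is exactly the equality case in Proposition \ref{prop-wall}(d); explicit evaluation of $\lVert\overline{Z}(F_1)\rVert$ and $\lVert\overline{Z}(F_2)\rVert$ as functions of $n$, using $g \gg 0$ and \eqref{bound for s}, shows that each of $\lVert z_1\rVert - \lVert\overline{Z}(F_1)\rVert$ and $\lVert z_2-z_1\rVert - \lVert\overline{Z}(F_2)\rVert$ is at least $1$. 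In the no-wall case, $\iota_*F$ is $\sigma_{b,w}$-semistable for every $(b,w)$ above $\ell^*$, so every nontrivial HN vertex $p_i$ projects strictly below $\ell^*$; a parallel geometric estimate, using Proposition \ref{prop.all bounds} to quantify how far $\ell^*$ sits below $\widetilde\ell$, produces the same gap of $2$ in perimeter.

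\textbf{Main obstacle.} The principal difficulty is the no-wall case, where the HN structure at $\sigma_{0,w}$ for small $w>0$ is not directly classified by Proposition \ref{prop-wall}. Closing this gap likely requires either refining Proposition \ref{prop-wall} to analyse walls of $\iota_*F$ below $\ell^*$, or a purely convex-geometric argument exploiting integrality of the Chern characters of the HN factors together with the quantitative estimates of Proposition \ref{prop.all bounds} to force the perimeter deficit to be at least $2$.
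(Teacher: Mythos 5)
Your overall architecture (restriction bound \eqref{bound} plus a perimeter comparison between $\p_{\iota_*F}$ and the triangle) matches the paper, but the quantitative split into (i) and (ii) fails, and (ii) is in fact false. Under strict containment with integral vertices, the perimeter deficit is \emph{strictly less than} $2$; it only tends to $2$ as $g \to \infty$. Concretely, in your own critical case $\ch_2(F_1) = s-r-1$ one computes
\begin{equation*}
\lVert z_1 \rVert - \lVert \overline{Z}(F_1) \rVert \;=\; \frac{2(s-r)-1}{\sqrt{(s-r)^2+(2H^2+4)k^2} \,+\, \sqrt{(s-r-1)^2+(2H^2+4)k^2}} \;<\; 1,
\end{equation*}
since the denominator exceeds $(s-r)+(s-r-1)$; the same holds on the other side. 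So ``each gap is at least $1$'' is never true, and your conclusion $h^0 \leq \frac{\chi}{2}+\frac{Q_{\text{out}}}{2}-1 < r+s$ does not follow. The paper resolves this by running the two estimates in the opposite regime: it defines $\epsilon_{\text{out}} \coloneqq \frac{1}{2}\chi(\iota_*F)+\frac{1}{2}Q_{\text{out}}-(r+s)$ and shows $2\epsilon_{\text{out}} < M_1 \to 0$ as $H^2 \to \infty$ (using \eqref{assumption} and \eqref{bound for s}), while bounding the deficit $Q_{\text{out}}-Q_{\text{in}}$ \emph{below} by an explicit quantity tending to $2$; then $2\epsilon_{\text{out}} < Q_{\text{out}}-Q_{\text{in}}$ for $g \gg 0$ yields the contradiction. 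Your version, with the fixed constants $1$ and $2$, cannot be closed because the deficit sits just below $2$ exactly in the extremal integral cases.

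The obstacle you flag in the no-wall case also signals a structural difference: the paper's proof of this proposition uses no wall analysis at all. Since the extremal points $p_i = \overline{Z}(E_i)$ have integral coordinates, strict containment in $\triangle oz_1z_2$ automatically places $\p_{\iota_*F}$ inside the explicit convex polygon $oz_0'z_1'z_2'z_2$ (vertices shifted inward by one lattice step along each edge), whence $\sum_i \lVert \overline{p_ip_{i-1}} \rVert \leq Q_{\text{in}}$ by convexity --- uniformly in the number of HN factors and with no case split on whether $\iota_*F$ has a wall above $\ell^*$. Proposition \ref{prop-wall} is needed only for the preceding Lemma \ref{lem-polygon} (containment in the triangle and the equality case), which here is a hypothesis. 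So the missing piece in your plan is not a refinement of the wall classification below $\ell^*$ but precisely the integrality-plus-convexity step, combined with the corrected asymptotic matching of $\epsilon_{\text{out}}$ against the deficit.
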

\begin{proof}
Since the extremal points of $\p_{\iota_*F}$ have integral coordinates, $\p_{\iota_*F}$ lies inside $oz_0'z_1'z_2'z_2$ where $z_0' = (-s+r)\frac{k-1}{k} +i\, (k-1)$, $z_1' = -s+r+1 +i\, k$ and 
\begin{equation*}
z_2' = (-kH^2 + \frac{r}{2}H^2 + s-r)\frac{1}{r-k} -s+r + i (k+1), 
\end{equation*}
see Figure \ref{fig.polygon}.

	\begin{figure}[h]
	\begin{centering}
		\definecolor{zzttqq}{rgb}{0.27,0.27,0.27}
		\definecolor{qqqqff}{rgb}{0.33,0.33,0.33}
		\definecolor{uququq}{rgb}{0.25,0.25,0.25}
		\definecolor{xdxdff}{rgb}{0.66,0.66,0.66}
		
		\begin{tikzpicture}[line cap=round,line join=round,>=triangle 45,x=1.0cm,y=1.0cm]

		\draw[->,color=black] (0,0) -- (0,3.7);
		\draw[->,color=black] (-2,0) -- (5.4,0);
		
		\draw[color=black] (0,0) -- (5,3.5);
		\draw[color=black] (0,0) -- (-1.2,1);
		\draw[color=black] (-1.2,1) -- (5,3.5);
		
		\draw[color=black] (-.6,.5) -- (-.8,1);
		\draw[color=black] (1.3,2) -- (5,3.5);
		\draw[color=black] (-.2, 1.4) -- (-.8,1);
		
		%\draw[color=black, dashed] (-1.2,1) -- (0,1);
		
		\draw (0,0) node [below] {$o$};
		\draw (5.4,0) node [right] {Re$[\,\overline{Z}\,] = -\ch_2$};
		\draw (0,3.6) node [above] {Im$[\,\overline{Z}\,] = \frac{H\ch_1}{H^2}$};
		\draw (-1.15,1) node [left] {$z_1$};
		\draw (-.75,.95) node [right] {$z_1'$};
		\draw (5,3.5) node [above] {$z_2$};
		\draw (-.2, 1.4) node [above] {$z_2'$};
	    \draw (-.56, .4) node [left] {$z_0'$};

		\begin{scriptsize}
		
		\fill [color=black] (0,0) circle (1.1pt);
		%\fill [color=black] (0,3.5) circle (1.1pt);
		\fill [color=black] (5,3.5) circle (1.1pt);
		\fill [color=black] (-.6,.5) circle (1.1pt);
		\fill [color=black] (-1.2,1) circle (1.1pt);
		\fill [color=black] (-.8,1) circle (1.1pt);
		\fill [color=black] (-.2,1.4) circle (1.1pt);
		%\fill [color=black] (1.3,1) circle (1.1pt);
		
		\end{scriptsize}
		
		\end{tikzpicture}
		
		\caption{$\p_{i_*F}$ is inside the polygon $oz_0'z_1'z_2'z_2$}
		
		\label{fig.polygon}
		
	\end{centering}
	
\end{figure}
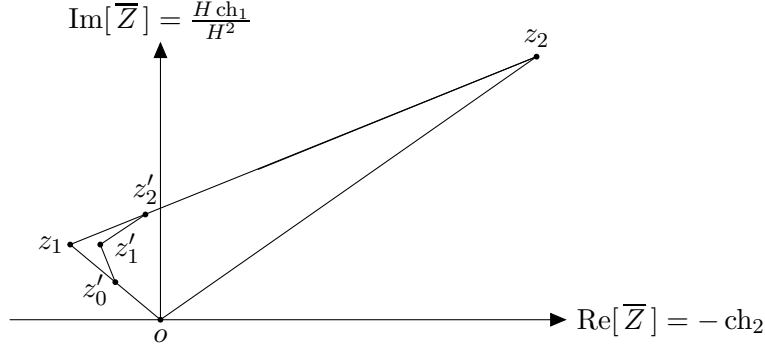
	Convexity of $\p_{\iota_*F}$ and the polygon $oz_0'z_1'z_2'z_2$ gives $$\sum_{i=1}^{n} \lVert \overline{p_ip_{i-1}} \rVert \leq \lVert \overline{oz_0'} \rVert + \lVert \overline{z_0'z_1'} \rVert + \lVert \overline{z_1'z_2'} \rVert +  \lVert \overline{z_2'z_2} \rVert  \eqqcolon Q_{\text{in}}.$$ 
 Let $Q_{\text{out}} \coloneqq \lVert \overline{oz_1} \rVert + \lVert \overline{z_1z_2} \rVert$, then 
\begin{equation*}
Q_{\text{out}}-Q_{in} =  \lVert \overline{z_0'z_1} \rVert - \lVert \overline{z_0'z'_1} \rVert + \lVert \overline{z_1z_2'} \rVert - \lVert \overline{z'_1z'_2} \rVert.
\end{equation*}
Define $\epsilon\_{\text{out}} \coloneqq \frac{1}{2}\chi(\iota_*F) + \frac{1}{2}Q_{\text{out}} - (r+s)$. Assume for a contradiction that $r+s \leq h^0(F)$, then \eqref{bound} gives
\begin{equation*}
\dfrac{1}{2}\chi(\iota_*F) + \dfrac{1}{2}Q_{\text{out}} - \epsilon\_{\text{out}}\; = \; r+s\; \leq h^0(F) \leq \dfrac{\chi(\iota_*F)}{2} + \dfrac{1}{2} \sum_{i=1}^{n}\lVert \overline{p_ip_{i-1}}\rVert  \leq \dfrac{1}{2}\chi(\iota_*F) + \dfrac{1}{2}Q_{\text{in}}.  
\end{equation*} 
Thus $Q_{\text{out}}-Q_{\text{in}} \leq 2\,\epsilon\_{\text{out}}$ but we show for $g \gg 0$ it does not hold.  
%\begin{equation}\label{cond.f}
%2\epsilon < l-l_{in} \,.
%\end{equation}
We know
\begin{align}
2\epsilon\_{\text{out}} \coloneqq \ & kH^2 - \frac{r}{2}H^2 + Q_{\text{out}} -2(r+s)\nonumber\\
= \ & \sqrt{(s-r)^2 + (2H^2+4)k^2} + \sqrt{\left(-kH^2 + \frac{r}{2}H^2+s-r\right)^2+ (2H^2+4)(r-k)^2  }
\nonumber \\
& -(s+r) - (-kH^2 + \frac{r}{2}H^2+s+r)\nonumber\\
%= \ & \frac{(s-r)^2 + (2H^2+4)k^2- (s+r)^2}{\sqrt{(s-r)^2 + (2H^2+4)k^2} + (s+r)} +\\ 
%& \frac{(-kH^2 + \frac{r}{2}H^2+s-r)^2+ (2H^2+4)(r-k)^2 - (-kH^2 + \frac{r}{2}H^2+s+r)^2}{\sqrt{(-kH^2 + \frac{r}{2}H^2+s-r)^2+ (2H^2+4)(r-k)^2  } + (-kH^2 + \frac{r}{2}H^2+s+r)}\\
=\ & \frac{-4rs + (2H^2+4)k^2}{\sqrt{(s-r)^2 + (2H^2+4)k^2} + (s+r)} \,+ \label{aa.1}\\ 
& \frac{-4r(-kH^2 + \frac{r}{2}H^2+s)+ (2H^2+4)(r-k)^2}{\sqrt{(-kH^2 + \frac{r}{2}H^2+s-r)^2+ (2H^2+4)(r-k)^2  } + (-kH^2 + \frac{r}{2}H^2+s+r)}\,.\label{aa.2}
\end{align}
By our assumption \eqref{assumption}, $2k^2H^2 -4rs +4k^2 \geq -4+4k^2 \geq 0$ so 
\begin{equation*}
(s+r)^2 \leq (s+r)^2-4rs+k^2(2H^2+4) = (s-r)^2 + k^2(2H^2+4). 
\end{equation*}
Thus $\eqref{aa.1} \leq \frac{-4rs + (2H^2+4)k^2}{2(r+s)}$. 

The numerator of \eqref{aa.2} is $2k^2H^2 -4rs +4(r-k)^2 \geq -4+4(r-k)^2 \geq 0$ by \eqref{assumption}, so   
\begin{align*}
\left(-kH^2 + \frac{r}{2}H^2+s+r\right)^2 \leq\ & \left(-kH^2 + \frac{r}{2}H^2+s+r\right)^2 + 2k^2H^2 -4rs +4(r-k)^2\\
 =\ & \left(-kH^2 + \frac{r}{2}H^2+s-r\right)^2 + (2H^2+4)(r-k)^2.
\end{align*}
This implies $\eqref{aa.2} \leq \frac{2k^2H^2 -4rs+4(r-k)^2}{2(-kH^2 + \frac{r}{2}H^2+s+r)}$, thus 
\begin{align*}
2\epsilon\_{\text{out}} < \frac{k^2(H^2+2) -2rs}{r+s} + \frac{k^2H^2 -2rs+2(r-k)^2}{-kH^2 + \frac{r}{2}H^2+s+r} \eqqcolon M_1
\end{align*}
When $H^2=2g-2 \rightarrow +\infty$, we know $s \rightarrow \frac{k^2}{2r}H^2 \rightarrow +\infty$ by \eqref{bound for s}, so one gets $M_1 \rightarrow 0$. On the other hand,
\begin{align}\label{l-lin}
Q_{\text{out}} -Q_{\text{in}} = \ & \sqrt{\left( \frac{1}{k}(s-r) \right)^2+ (2H^2+4) } - \sqrt{\left(\frac{1}{k}(s-r)  -1 \right)^2 + (2H^2+4) }\ \ + \\
& \sqrt{\left(\frac{1}{r-k}\left(-kH^2 + \frac{r}{2}H^2+s-r\right) \right)^2+ (2H^2+4)  }\ \  -
\nonumber\\
& \sqrt{\left(\frac{1}{r-k}\left(-kH^2 + \frac{r}{2}H^2+s-r\right) -1\right)^2+ (2H^2+4)  } \nonumber %\\
%> \ & \frac{\frac{s-r}{k} -\frac{1}{2}}{\sqrt{\left( (s-r)\frac{1}{k} \right)^2+ (2H^2+4) }} + \frac{\frac{-kH^2+\frac{r}{2}H^2+s-r}{r-k} -\frac{1}{2} }{ \sqrt{\left((-kH^2 + \frac{r}{2}H^2+s-r)\frac{1}{r-k} -1\right)^2+ (2H^2+4)  }} \eqqcolon M_2
\end{align}
By \eqref{bound for s}, when $H^2 \rightarrow +\infty$, we have $\frac{1}{k}(s-r) >1$ and 
\begin{align*}
\frac{1}{r-k}\left(-kH^2 + \frac{r}{2}H^2+s-r\right)\overset{\eqref{bound for s}}{>}\ & \frac{1}{r-k}\left(-kH^2 + \frac{r}{2}H^2+\frac{k^2}{2r}H^2-r -1+\frac{1}{r}\right)\\
= \ & \frac{r-k}{2r}H^2 + \frac{1}{r-k}\left(-r -1+\frac{1}{r}\right)\\
> \ & 1\,.
\end{align*}
Thus \eqref{l-lin} gives 
\begin{align*}
Q_{\text{out}} -Q_{\text{in}} > \ & \frac{\frac{s-r}{k} -\frac{1}{2}}{\sqrt{ \frac{1}{k^2} (s-r)^2+ (2H^2+4) }} + \frac{\frac{-kH^2+\frac{r}{2}H^2+s-r}{r-k} -\frac{1}{2} }{ \sqrt{\frac{1}{(r-k)^2}(-kH^2 + \frac{r}{2}H^2+s-r)^2+ (2H^2+4)  }}. %\eqqcolon M_2
\end{align*}
When $s \rightarrow \frac{k^2}{2r}H^2 \rightarrow +\infty$, the right hand side goes to $2$, thus for $g \gg 0$, we obtain
\begin{equation}\label{epsilon}
2\epsilon\_{\text{out}} < Q_{\text{out}} -Q_{\text{in}}
\end{equation}
as claimed.  
\end{proof}

\begin{proof}[Proof of Theorem \ref{thm}]
	%Pick $g$ large enough so that the inequalities in Remark \ref{Rem-big enough-section 2}, \eqref{cond.f} and the following hold
	%\begin{equation*}
	%H^2 > 2r \qquad , \qquad -1+\frac{1}{r} +\frac{k^2}{2r}H^2 \geq r.
	%\end{equation*} 
    %Then 
    By Proposition \ref{prop.restriction}(b), there is a well-defined map 
    \begin{align*}
    \Psi \colon M_X(\v)& \rightarrow \bn_C(r, kH^2, r+s)\\
    E & \mapsto E|_C. 
    \end{align*}
    We know the exact sequence $E \rightarrow \iota_*E|_C \rightarrow E(-H)[1]$ in $\cA(b=0)$ is the HN filtration of $\iota_*E|_C$ below the wall $\widetilde{\ell}$, so the uniqueness of HN factors implies $\Psi$ is injective. Combining Lemma \ref{lem-polygon} with Proposition \ref{prop-polygon} shows $\Psi$ is surjective.
    Thus Proposition \ref{prop.restriction}(b) implies that any vector bundle $F$ in the Brill-Noether locus $\bn_C(r,\,k(2g-2),\, r+s)$ is stable and $h^0(F) = r+s$. The Zariski tangent space to the Brill-Noether locus at the point $[F]$ is the kernel of the map 
    \begin{equation*}
    k_1 \colon \text{Ext}^1(F,F) \rightarrow \text{Hom} \big( H^0(C,F) , H^1(C,F) \big),
    \end{equation*}
    where any $f \colon F \rightarrow F[1] \in \text{Ext}^1(F,F) = \text{Hom}_{C}(F,F[1])$ goes to 
    \begin{equation*}
    k_1(f) = H^0(f) \colon \text{Hom}_{C}(\mathcal{O}_C,F) \rightarrow \text{Hom}_{C}(\mathcal{O}_C,F[1]),
    \end{equation*}
    see \cite[Proposition 4.3]{bhosle:brill-noether-loci-on-nodal-curves} for more details. Note that the proof in \cite{bhosle:brill-noether-loci-on-nodal-curves} is valid for any family of simple sheaves on a variety. 
    
    The moduli space $M_{X}(\v)$ is a (non-empty) smooth quasi-projective variety, see for instance \cite[Chapter 10, Corollary 2.1 \& Theorem 2.7]{huybretchts:lectures-on-k3-surfaces}. Hence to prove Theorem \ref{thm}, we only need to show the derivative of the restriction map 
    \begin{equation*}
    d\Psi \colon  	T_{[E]}M_{X}(\v)  \rightarrow T_{[E|_C]} \mathcal{BN},
    \end{equation*} 
    is surjective. It sends any $f \colon E \rightarrow E[1] \in \text{Hom}_X(E,E[1])$ to its restriction $\iota^*f \colon \iota^*E \rightarrow \iota^*E[1] \in \ker(k_1)$. By Proposition \ref{prop.restriction}(c), we can apply the same argument as in the proof of \cite[Theorem 1.2]{feyz:mukai-program} to show surjectivity of $d\Psi$, hence $\Psi$ is an isomorphism.    
\end{proof}

\section{location of the first wall}\label{section.location}
In this section, we prove Proposition \ref{prop.all bounds} and Lemma \ref{lem-walls above ell start in section 3} for $g \gg 0$. The first step is to control the position of the lines $\ell_{\v}$ and $\ell_{\alpha}$.  %described in Proposition \ref{prop.all bounds}.    

\begin{Prop}\label{prop-b-i}
	Fix $0 < \epsilon, \epsilon'< \frac{1}{2r}$. If $g \gg 0$, then 
	\begin{enumerate*}
		\item The line $\ell_\v$ passing through $\Pi(\v)$ and the origin intersects $\Gamma$ at two points (except the origin) with $b$-values $b_1^\v< 0 < b_2^\v$ such that 
		\begin{equation*}
		b_1^\v < -\frac{k}{s} + \frac{1}{s(s-1)}  \qquad \text{and} \qquad \frac{k}{r} -	 \epsilon < b_2^\v \ . 
		\end{equation*}
		\item The line $\ell_{\alpha}$ passing through $\Pi(\v(-H))$ and $\Pi(\al)$ intersects $\Gamma$ at two points with $b$-values $b_1^\al < b_2^\al$ such that 
		\begin{equation*}
		b_1^\al < \frac{k-r}{r} + \epsilon'  \qquad \text{and} \qquad -\frac{k}{s} - \frac{1}{s(s-1)} < b_2^\al. 
		\end{equation*}
	\end{enumerate*}
	%	 lines $\ell$ and $\ell'$ intersect $\Gamma$ at two points with $b$-values $b_1<b_2$ and $b_1'< b_2'$, respectively, such that
	%	\begin{equation*}
	%	b_1 < -\frac{k}{s} + \frac{1}{s(s-1)}  \qquad , \qquad \frac{k}{r} - %\frac{1}{r(r-1)}
	%  \epsilon < b_2 
	%	\end{equation*}
	%	\begin{equation*}
	%	b_1' < \frac{k-r}{r} + \epsilon' % \frac{1}{r(r-1)} 
	%	\qquad \text{and} \qquad -\frac{k}{s} - \frac{1}{s(s-1)} < b_2'. 
	%	\end{equation*}
\end{Prop}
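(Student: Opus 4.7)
The plan is to reduce each bound to a direct sign computation: since $b_1^{\bullet}<b_2^{\bullet}$ are the roots of the quadratic $\Gamma(b)-\ell(b)$, showing that a proposed bounding value $b^{*}$ satisfies $\Gamma(b^{*})-\ell(b^{*})<0$ places $b^{*}$ strictly between the two roots, while the opposite sign places it outside $(b_1^{\bullet},b_2^{\bullet})$. In each part I would first write the line equation explicitly, then evaluate $\Gamma-\ell$ at a \emph{pivot} point where the value is of bounded size (independent of $g$), and finally move to the perturbed point while tracking how the $H^2$-scale terms produce a dominant negative change.

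For part (a), the line $\ell_\v$ passes through the origin and $\Pi(\v)=(k/r,(s-r)/r)$, and a direct check shows $\Pi(\al)=(-k/s,(r-s)/s)$ also lies on it (both $w/b$ ratios equal $(s-r)/k$), so $\ell_\v$ has equation $w=\tfrac{s-r}{k}\,b$. At the pivot $b=k/r$, assumption \eqref{assumption} gives
\begin{equation*}
\Gamma(k/r)-\ell_\v(k/r) = \frac{k^2(H^2+2)-2rs}{2r^2} \in \Bigl[\tfrac{k^2-1}{r^2},\, \tfrac{k^2+r-1}{r^2}\Bigr),
\end{equation*}
a bounded nonnegative quantity. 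The tangent slope $\Gamma'(k/r)=(k/r)(H^2+2)$ exceeds the slope $(s-r)/k$ of $\ell_\v$ by an amount of order $H^2$, so shifting $b$ leftward by a fixed $\epsilon<k/r$ decreases $\Gamma-\ell_\v$ by an amount of order $\epsilon H^2$, which swamps the pivot value for $g\gg 0$; hence $b_2^\v>k/r-\epsilon$. The symmetric computation at $b=-k/s$ gives $\Gamma(-k/s)-\ell_\v(-k/s)=[k^2(H^2+2)-2rs]/(2s^2)=O(1/s^2)$, and a rightward shift by $\delta=1/(s(s-1))$ changes this by approximately $-\delta\cdot s/k=-1/(k(s-1))$ (the term $[k^2(H^2+2)+s(s-r)]/(ks)$ is asymptotic to $s/k$ as $g\to\infty$), which dominates $O(1/s^2)$ for large $s$ and yields $b_1^\v<-k/s+1/(s(s-1))$.

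Part (b) is entirely analogous using $\ell_\al$ through $\Pi(\v(-H))$ and $\Pi(\al)$; a direct calculation gives
\begin{equation*}
\operatorname{slope}(\ell_\al) = \frac{2(s^2-r^2)+H^2 s(r-2k)}{2(sk-sr+rk)} \sim \frac{H^2(k-r)}{2r}
\end{equation*}
as $g\to\infty$ (using $s\sim k^2H^2/(2r)$ from \eqref{bound for s}). At the pivot $b=(k-r)/r$ a direct simplification yields
\begin{equation*}
\Gamma\bigl((k-r)/r\bigr)-\ell_\al\bigl((k-r)/r\bigr) = \frac{(k^2H^2-2rs)+2(k-r)^2}{2r^2} \in \Bigl[\tfrac{(k-r)^2-1}{r^2},\, \tfrac{(k-r)^2+r-1}{r^2}\Bigr),
\end{equation*}
bounded and nonnegative since $k<r$; at $b=-k/s$ the value $\Gamma(-k/s)-\ell_\al(-k/s)$ equals the same $O(1/s^2)$ expression as in part (a). A rightward shift from $(k-r)/r$ by a fixed $\epsilon'<(r-k)/r$ changes $\Gamma-\ell_\al$ by approximately $\epsilon' H^2[(k-r)+r\epsilon']/(2r)$, negative and of order $H^2$; a leftward shift from $-k/s$ by $\delta=1/(s(s-1))$ contributes approximately $-\delta H^2(r-k)/(2r)$. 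Both shifts push $\Gamma-\ell_\al$ below zero, placing the shifted points inside $(b_1^\al,b_2^\al)$ and giving the two bounds.

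The main technical hurdle throughout is the bookkeeping between the bounded pivot terms (of order $1/r^2$ or $1/s^2$) and the perturbation terms of order $H^2$; one must verify that for $g$ large enough (depending only on $r,k,\epsilon,\epsilon'$) the latter dominate. This reduces to systematic use of \eqref{assumption} to write $k^2H^2-2rs=O(r)$, together with \eqref{bound for s} to extract the asymptotic $s\sim k^2H^2/(2r)$, and presents no conceptual difficulty once these substitutions are in place.
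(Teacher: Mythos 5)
Your proposal is correct and takes essentially the same route as the paper: both arguments reduce each bound to checking the sign of the quadratic $\Gamma-\ell$ at explicit test points and verifying the resulting inequalities asymptotically as $g \to \infty$, using \eqref{assumption} and the asymptotic $s \sim \frac{k^2}{2r}H^2$ from \eqref{bound for s}. Your pivot-and-perturb bookkeeping (bounded values of $\Gamma-\ell$ at the projections $\Pi(\v)$, $\Pi(\al)$, $\Pi(\v(-H))$, dominated by order-$H^2$ or order-$1/s$ shifts) is an equivalent, slightly slicker repackaging of the paper's computation, which instead writes the roots via the quadratic formula and squares the inequalities before passing to the limit.
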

\begin{proof}
	The line $\ell_\v$ which is of equation $w = \frac{s-r}{k} b$ connects the point $\Pi(\v)$ which is outside or on $\Gamma$ to the point $(0,0)$, so it intersects $\Gamma$ at two other points except the origin with $b$-values $b_1^\v < b_2^\v$. The equation of $\Gamma$ for $b \neq 0$ is $\Gamma(b) = (1+ \frac{H^2}{2})b^2 -1$, thus 
	%When $H^2 \rightarrow +\infty$, then $s \rightarrow \frac{k^2}{2r}H^2 \rightarrow +\infty$ by \eqref{bound for s} which implies slope of the line $\ell_\v$ goes to $+\infty$ and so $b_1^\v \in [-\frac{1}{2} , 0]$. The equation of 
	%$\Gamma$ at this region is $\Gamma(b) = (1+ \frac{H^2}{2})b^2 -1$, thus the intersection point is  
	%. When $H^2 \rightarrow +\infty$, then $s \rightarrow \frac{k^2}{2r}H^2 \rightarrow +\infty$ by \eqref{bound for s}, 
	\begin{equation*}
	b_1^\v, b_2^{\v} = \frac{\frac{s-r}{k} \pm \sqrt{(\frac{s-r}{k})^2 +2H^2+4  }}{H^2+2}.
	\end{equation*}
	We claim if $g \gg 0$, then
	\begin{equation*}
	b_1^\v < -\frac{k}{s} + \frac{1}{s(s-1)} 
	\end{equation*}
	i.e. 
	\begin{equation*}
	\frac{s-r}{k} + \frac{\big(k(s-1)-1\big)(H^2+2)}{s(s-1)} < \sqrt{\left(\frac{s-r}{k}\right)^2 +2H^2+4}\ .  
	\end{equation*}
	Taking the power 2 of both sides shows that we only need to prove
	\begin{equation*}
	\big(k(s-1)-1\big)\left(2\frac{s-r}{k}s(s-1) + \big(k(s-1)-1\big)(H^2+2) \right) < 2s^2(s-1)^2
	\end{equation*} 
	which is equivalent to 
	\begin{equation}\label{cond.1}
	-\frac{2}{k}s(s-r) + (H^2+2)\left(k^2(s-1) + \frac{1}{s-1} -2k \right) < 2rs(s-1).
	\end{equation}
%	\begin{equation}
%	k^2(s-1)(H^2+2)-\frac{2}{k}s(s-r) -2k(H^2+2) + \frac{(H^2+2)}{s-1} < 2rs(s-1).
%	\end{equation}
	When $H^2 \rightarrow +\infty$, then $s \rightarrow \frac{k^2}{2r}H^2 \rightarrow +\infty$ by \eqref{bound for s}, so the limit of the left hand side is $(H^2)^2\,\frac{k^3}{2r} \left(k -\frac{1}{r} \right)$ but the limit of the right hand side is $(H^2)^2\,\frac{k^4}{2r}$, thus the claim follows. 
	
	\bigskip
	
	%\noindent\textbf{Case (a):} 
	%If $\frac{k}{r} \leq \frac{1}{2}$, the other intersection point of $\ell^\v$ with $\Gamma$ has $b$-value $b_2^\v \in (0, \frac{1}{2}]$ given by %because $b_2^v$ must be less than the first coordinate of $\Pi(v)$ which is $\frac{k}{r}$. Thus the above computation shows that 
	%\begin{equation*}
	%b_2^\v = \frac{\frac{s-r}{k} + \sqrt{(\frac{s-r}{k})^2 +2H^2+4  }}{H^2+2}.
	%\end{equation*}  
	The next step is to show for $g \gg 0$, we have 
	\begin{equation}\label{cond.2}
	b_2^\v = \frac{\frac{s-r}{k} + \sqrt{(\frac{s-r}{k})^2 +2H^2+4  }}{H^2+2} > \frac{k}{r} - \epsilon\, . 	
	\end{equation}
   This holds if %This is equivalent to 
	\begin{equation*}%\label{cond.2-1}
	\frac{1}{2} \left(\frac{k}{r} - \epsilon \right)^2(H^2+2) < \frac{s-r}{k} \left(\frac{k}{r} - \epsilon \right)+1\,.
	\end{equation*}
	The limit of the right hand side when $s \rightarrow \frac{k^2}{2r}H^2 \rightarrow +\infty$ is $\frac{k}{2r}\left(\frac{k}{r} - \epsilon \right)H^2$, so the above holds when $g \gg 0$. This completes the proof of part (a).  
	
     The line $\ell_{\alpha}$ in part (b) is of equation $w = \theta b + \beta-1$ for 
	\begin{equation}\label{theta-beta}
	\theta \coloneqq \frac{s^2-r^2 -skH^2 +\frac{rs}{2} H^2} { s(k-r) +kr}\ , \qquad 
	\beta \coloneqq \frac{r(k-r) +sk-k^2H^2 + \frac{kr}{2}H^2}{s(k-r) +kr}.  
	\end{equation}
	When $s \rightarrow \frac{k^2}{2r}H^2 \rightarrow +\infty$, we know 
	\begin{equation}\label{limit}
	\theta\ \rightarrow\ \frac{k-r}{2r}H^2  \qquad \text{and} \qquad \beta\ \rightarrow\ \frac{k-r}{k}.
	\end{equation}
%	\begin{equation}\label{limit}
%	\theta - \frac{k-r}{2r}H^2 \ \rightarrow\ \frac{\delta}{r-k}-\frac{r}{k} \qquad \text{and} \qquad \beta- \frac{k-r}{k}\ \rightarrow\ \frac{\delta}{H^2}\, \frac{r^2}{k^3}\left(\frac{r}{2} -k\right).
%	\end{equation}
%	where $-\frac{1}{r} \leq \delta \coloneqq \frac{k^2}{2r}H^2 -s < 1-\frac{1}{r}$. 
    Hence if $g \gg 0$, we have 
	 \begin{equation}\label{cond.3-before}
	 \theta^2 + 2\beta(H^2+2) \geq 0.
	 \end{equation}
	Therefore $\ell_{\al}$ intersects the parabola $w = b^2(\frac{H^2}{2} +1)-1$ at two points with $b$-values
	%Suppose it intersects the curve $\Gamma$ close to $\Pi(k_v)$ at a point with $b$-value $b_2^k$ bigger than $-\frac{1}{2}$. Then the equation of $\Gamma$ is given by $\Gamma(b) = b^2(\frac{H^2}{2} +1)-1$, so  
	\begin{equation*}
	b_1^{\al},\, b_2^\al = \frac{\theta \pm \sqrt{\theta^2 + 2\beta(H^2+2)}}{H^2+2}\,. 
	\end{equation*}
	We claim if $g \gg 0$, then 
	\begin{equation}\label{cond.3}
	-\frac{k}{s} -\frac{1}{s(s-1)} <  b_2^\al = \frac{\theta + \sqrt{\theta^2 + 2\beta(H^2+2)}}{H^2+2}
	\end{equation}
	which holds if 
	\begin{equation}\label{cl}
	\frac{H^2+2}{2} \left(\frac{k}{s} + \frac{1}{s(s-1)}\right)^2 < \beta - \theta\left(\frac{k}{s}+ \frac{1}{s(s-1)}\right). 
	\end{equation}
	The right hand side is equal to 
	\begin{equation*}
	\beta - \theta\left(\frac{k}{s}+ \frac{1}{s(s-1)}\right) = \frac{
\frac{H^2}{s-1}\left(k-\frac{r}{2}\right) +r(k-r) -\frac{s}{s-1} + \frac{kr^2}{s} + \frac{r^2}{s(s-1)} 	
}{s(k-r)+kr}
	\end{equation*}
	thus its limit when $s \rightarrow \frac{k^2}{2r}H^2 \rightarrow +\infty$ is equal to $\frac{1}{H^2}\left(2\frac{r^2}{k^2} + \frac{2r(r-k)}{k^4}\right)$. Since the limit of the left hand side of \eqref{cl} is equal to $\frac{2r^2}{H^2k^2}$ the claim \eqref{cl} follows.   
	
    \bigskip
    
    The other intersection point of $\ell_{\al}$ with $\Gamma$ satisfies 
    \begin{equation*}
    b_1^\al = \frac{\theta - \sqrt{\theta^2 + 2\beta(H^2+2)}}{H^2+2}  < \frac{k-r}{r} + \epsilon' 
    \end{equation*}
    if and only if 
    \begin{equation}\label{cond.4}
    \theta - (H^2+2) \left(\frac{k-r}{r} + \epsilon' \right) <  \sqrt{\theta^2 + 2\beta(H^2+2)}\,.  
    \end{equation}
    Taking power 2 from both sides shows that \eqref{cond.4} holds if 
    \begin{equation*}
    \frac{H^2+2}{2} \left(\frac{r-k}{r} - \epsilon' \right)^2 < \beta -\theta\left(\frac{r-k}{r} - \epsilon' \right) 
    \end{equation*}
    which is satisfied by \eqref{limit} for $g \gg 0$ and $\epsilon' < \frac{1}{2r}$. 
    
\end{proof}

If $g \gg 0$, \eqref{limit} implies
\begin{equation}\label{cond.5-before}
\beta -1 < 0
\end{equation}
where $\beta$ is defined as in \eqref{theta-beta}. Thus the origin point $(0,0)$ and so the line segment connecting $\Pi(\v(-H))$ to the origin lies above $\ell_{\al}$. Thus it intersects the curve $\Gamma$ at a point with $b$-value $b_1^{\v(-H)}$ satisfying 
\begin{equation}\label{b-1-v(-H)}
\frac{k-r}{r}\, \leq\,  b_1^{\v(-H)} \,\leq\, b_1^{\al} \ .  
\end{equation}
%By ..., the equation of $\Gamma$ in the interval $[\frac{k-r}{r}, b_1^{\al}]$ is give by ... if $k < \frac{r}{2}$ and ... if $\frac{1}{2} < \frac{k}{r} <1$. If $\Pi(\v(-H))$ lies in $\Gamma$, then $\frac{k-r}{r} =  b_1^{\v(-H)} = b_1^{\al}$ and so the claim follows. So we may assume $\Pi(\v(-H))$ lies below the curve $\Gamma$. Then the curve $\Gamma$ in the interval $(\frac{k-r}{r}, b_1^{\al})$ lies inside the triangle with vertices $\Pi(v(-H))$, $(b_1^{\al}, \Gamma(b_1^{\al}))$ and $\left(\frac{k-r}{r}, \Gamma(\frac{k-r}{r})\right)$, see Figure .... 
%Thus the line $\ell^{\v(-H)}$ intersects $\Gamma$ at a point $b_1^{\v(-H)} <0$ satisfying \eqref{b-1-v(-H)}.  
Consider the line $\widetilde{\ell}$ passing through $\Pi(\v)$ and $\Pi(\v(-H))$. It is of equation 
\begin{equation*}
w = H^2\left(\frac{k}{r} - \frac{1}{2}\right) b \,+ \frac{s}{r} -H^2\frac{k}{r} \left(\frac{k}{r} - \frac{1}{2} \right) -1. 
\end{equation*}
When $s \rightarrow \frac{k^2}{2r}H^2 \rightarrow +\infty$, we get
\begin{equation}\label{cond.5}
\frac{s}{r} -H^2\frac{k}{r} \left(\frac{k}{r} - \frac{1}{2} \right)-1 \ > \ 0. %\ -1-\frac{1}{r}+\frac{1}{r^2} + H^2\frac{k}{2r}\left(1 - \frac{k}{r}  \right) >0. 
\end{equation} 
This implies the origin $(0, 0)$ lies below $\widetilde{\ell}$. Therefore the line segment connecting $\Pi(\v)$ to $\Pi(\v(-H))$ (which is part of $\widetilde{\ell}$) lies above $\ell_\v$ and $\ell_{\v(-H)}$, so $\widetilde{\ell}$ intersects $\Gamma$ at two points with $b$-values $\widetilde{b}_1 < \widetilde{b}_2$ satisfying
\begin{equation*}
\frac{k-r}{r} \,\leq\, \widetilde{b}_1 \,\leq\, b_1^{\v(-H)}
\qquad \text{and} \qquad
b_2^\v\, \leq\, \widetilde{b}_2 \, \leq \, \frac{k}{r}. 
\end{equation*}
Thus \eqref{b-1-v(-H)} together with Proposition \ref{prop-b-i} implies 
\begin{equation}\label{s-1}
\frac{k-r}{r} \,\leq\, \widetilde{b}_1 \,<\, \frac{k-r}{r} +\epsilon' \qquad \text{and} \qquad \frac{k}{r} -\epsilon \,<\, \widetilde{b}_2 \,\leq\, \frac{k}{r}\,. 
\end{equation} 
Thus if we assume $H^2$ is large enough, the line $\widetilde{\ell}$ intersects $\Gamma$ at two points which are arbitrary close to $\Pi(\v)$ and $\Pi(\v(-H))$. Consider the line $\ell^*$ as in Definition \ref{def-ellstar}. We set  
\begin{equation}\label{chosen epsilon}
\epsilon' = b_1^* -\frac{k-r}{r} \qquad \text{and} \qquad  \epsilon = \frac{k}{r} -b_2^*\,. 
\end{equation}
Then the line $\ell^*$ lies below the parallel line $\widetilde{\ell}$. 
\begin{Lem}
	The origin point $(0,0)$ lies below $\ell^*$ and 
	\begin{equation}\label{claim}
	\frac{k-r}{r}< b_1^* <0< b_2^* < \frac{k}{r}.
	\end{equation}
\end{Lem}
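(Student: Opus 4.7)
The strategy is to compare $\ell^*$ with the parallel line $\widetilde{\ell}$ and to exploit the fact that, for a line of fixed slope, the two intersection points with the upward-opening parabola $\Gamma$ spread apart as the $w$-intercept increases and come together as it decreases. Once we know $\ell^*$ sits strictly below $\widetilde{\ell}$, the outer inequalities $\frac{k-r}{r}<b_1^*$ and $b_2^*<\frac{k}{r}$ in \eqref{claim} will follow from the bounds $\widetilde{b}_1\ge\frac{k-r}{r}$ and $\widetilde{b}_2\le\frac{k}{r}$ already recorded in \eqref{s-1}.

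First I would apply Proposition \ref{prop-b-i} together with \eqref{b-1-v(-H)} and \eqref{s-1}, this time choosing $\epsilon=\epsilon'=\frac{1}{2r^2(r+1)}$ (which is allowed since this is smaller than $\frac{1}{2r}$). This yields, for $g\gg 0$, both $\widetilde{b}_1-\frac{k-r}{r}<\frac{1}{2r^2(r+1)}$ and $\frac{k}{r}-\widetilde{b}_2<\frac{1}{2r^2(r+1)}$, so the line $\widetilde{\ell}$ already satisfies \eqref{cond.for ellstar} with strict slack. By continuity of the intersection points in the $w$-intercept, any line of the same slope obtained by lowering $\widetilde{\ell}$ by a sufficiently small amount still satisfies \eqref{cond.for ellstar}. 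Hence $\ell^*$ lies strictly below $\widetilde{\ell}$, and the monotonicity above gives $b_1^*>\widetilde{b}_1\ge\frac{k-r}{r}$ and $b_2^*<\widetilde{b}_2\le\frac{k}{r}$.

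The inner inequalities $b_1^*<0<b_2^*$ are then a direct consequence of \eqref{cond.for ellstar} together with $1\le k\le r-1$ (which forces $r\ge 2$): indeed $b_1^*\le\frac{k-r}{r}+\frac{1}{r^2(r+1)}\le-\frac{1}{r}+\frac{1}{r^2(r+1)}<0$, and symmetrically $b_2^*\ge\frac{k}{r}-\frac{1}{r^2(r+1)}\ge\frac{1}{r}-\frac{1}{r^2(r+1)}>0$. To finish, I would show that the $w$-intercept $\ell^*(0)=\Gamma(b_1^*)-H^2(\tfrac{k}{r}-\tfrac12)\,b_1^*$ is positive for $g\gg 0$: setting $b_1^*=\frac{k-r}{r}+\delta$ with $\delta\in[0,\tfrac{1}{r^2(r+1)}]$, a short expansion shows that the coefficient of $H^2$ in $\ell^*(0)$ equals $\tfrac12\bigl(\tfrac{k-r}{r}+\delta\bigr)\bigl(\delta-\tfrac{k}{r}\bigr)$, which at $\delta=0$ takes the positive value $\frac{k(r-k)}{2r^2}$; since the derivative of this expression in $\delta$ is bounded in absolute value by $\tfrac12$ on the tiny interval $[0,\tfrac{1}{r^2(r+1)}]$, the $H^2$-coefficient stays bounded below by a positive constant depending only on $r,k$. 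Hence $\ell^*(0)>0$ for $g\gg 0$. The only routine calculation is this last $H^2$-coefficient estimate; no step presents a real obstacle.
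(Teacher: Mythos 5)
Your proposal is correct, and on the outer inequalities it follows the paper's strategy while diverging usefully in the details. The paper also compares $\ell^*$ with the parallel line $\widetilde{\ell}$ via \eqref{s-1}, but it does so with the choice \eqref{chosen epsilon}, setting $\epsilon' = b_1^* - \frac{k-r}{r}$ and $\epsilon = \frac{k}{r} - b_2^*$, which tacitly presupposes positivity of exactly the quantities being bounded; your fixed choice $\epsilon = \epsilon' = \frac{1}{2r^2(r+1)}$ together with the slack-plus-continuity argument removes that slight circularity and yields the strict inequalities $b_1^* > \widetilde{b}_1$ and $b_2^* < \widetilde{b}_2$ directly. For the claim that $(0,0)$ lies below $\ell^*$, the routes genuinely differ: the paper writes $\ell^*$ as $w = H^2\big(\frac{k}{r}-\frac{1}{2}\big)b + \alpha - 1$, bounds the chord width $b_2^* - b_1^* \geq 1 - \frac{2}{r^2(r+1)}$ using \eqref{cond.for ellstar}, and inverts the quadratic formula to obtain $\alpha > 1$ in \eqref{condition.extra}; you instead anchor the line at $\big(b_1^*, \Gamma(b_1^*)\big)$, expand $b_1^* = \frac{k-r}{r} + \delta$, and show the $H^2$-coefficient of the intercept, $\frac{1}{2}\big(\frac{k-r}{r}+\delta\big)\big(\delta - \frac{k}{r}\big)$ (which I have checked), is bounded below by a positive constant. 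Both are leading-order-in-$H^2$ estimates resting on $1 \leq k \leq r-1$; yours buys a concrete constant near $\frac{k(r-k)}{2r^2}$, while the paper's needs only \eqref{cond.for ellstar} and no prior outer bound on $b_1^*$ (a dependence your version technically has, though it is harmless since positivity of the product only improves as $\delta$ decreases). Finally, you derive $b_1^* < 0 < b_2^*$ directly from \eqref{cond.for ellstar} and $\frac{k-r}{r} \leq -\frac{1}{r}$, $\frac{k}{r} \geq \frac{1}{r}$, which is more elementary than the paper's implicit deduction from $\alpha > 1$ (the line sits above the parabola's value $-1$ at $b=0$, so $0$ lies between the roots). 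One simplification: your derivative bound is unnecessary, since on $\big[0, \frac{1}{r^2(r+1)}\big]$ both factors $\frac{k-r}{r}+\delta$ and $\delta - \frac{k}{r}$ are negative, each of absolute value at least $\frac{1}{r} - \frac{1}{r^2(r+1)}$, so the product is immediately bounded below by $\frac{1}{2}\big(\frac{1}{r} - \frac{1}{r^2(r+1)}\big)^2 > 0$.
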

\begin{proof}
	The equation of $\ell^*$ is given by $w = H^2\left(\frac{k}{r} -\frac{1}{2}\right) b+ \alpha -1$ for some $\alpha \in \mathbb{R}$. To prove the first claim we need to show $\alpha >1$. We know the intersection points of $\ell^*$ with $\Gamma$ satisfies  
	\begin{equation*}
	b_2^*-b_1^* = \frac{2\sqrt{(H^2)^2\left(\frac{k}{r} -\frac{1}{2}\right)^2 +2\alpha(H^2+2) }}{H^2+2} \geq \frac{k}{r} - \frac{1}{r^2(r+1)} -\frac{k-r}{r} -\frac{1}{r^2(r+1)}
	\end{equation*} 
	This implies for $g \gg 0$, we have 
	\begin{equation}\label{condition.extra}
	2\alpha \geq (H^2+2) \left(\frac{1}{2} -\frac{1}{r^2(r+1)}  \right)^2 - \frac{(H^2)^2}{H^2+2} \left(\frac{k}{r} -\frac{1}{2}\right)^2 > 2.
	\end{equation}
	%because $|\frac{k}{r} - \frac{1}{2}| < \frac{1}{2} - \frac{1}{r}$. 
	The second claim \eqref{claim} follows from \eqref{s-1} and the point that by the choice of $\epsilon$ and $\epsilon'$ as in \eqref{chosen epsilon}, we know $\widetilde{\ell}$ lies above $\ell^*$.     	
\end{proof}
%where $b_1^*<b_2^*$ are the $b$-values of intersection points of the curve $\Gamma$ with the line $\ell^*$. 
 %Thus the line $\tilde{\ell}$ lies above $\ell^*$.
 The above arguments completes the proof of Proposition \ref{prop.all bounds}.  
The final step is to prove Lemma \ref{lem-walls above ell start in section 3}. 
%
%\begin{Lem}\label{lem-walls above ell start}
%	Let $\ell^*$ intersects the vertical lines $b = \frac{k}{r}$ and $b = \frac{k-r}{r}$ at $p_1 = (\frac{k}{r}, w_1)$ and $p_2= (\frac{k-r}{r}, w_2)$, respectively. The line $\ell_1$ that passes through $p_1$ and the origin intersects the vertical line $b = \frac{k}{r} -\frac{1}{r(r-1)}$ at a point inside $U$. Similarly, the line $\ell_2$ through $p_2$ and the origin intersects $b = \frac{k-r}{r} + \frac{1}{r(r-1)}$ at a point inside $U$.    
%\end{Lem}
\begin{proof}[Proof of Lemma \ref{lem-walls above ell start in section 3}]
	%We prove the claim when $\frac{k}{r} < \frac{1}{2}$, the other case $\frac{1}{2} < \frac{k}{r} < 1$ follows by a similar argument. 
	The equation of the line $\ell^*$ is given by $w = H^2(\frac{k}{r} -\frac{1}{2}) b + \alpha-1$ where $\alpha = (b_2^*)^2\left(\frac{H^2}{2} +1\right) - H^2(\frac{k}{r} -\frac{1}{2}) b_2^*$ and equation of the line $\ell_1$ is given by $w = \frac{r}{k} w_1 b $. We know $b_2^* = \frac{k}{r} -\epsilon$, so 
	\begin{equation*}
	\alpha = \left(\frac{H^2}{2} +1 \right) \left(\frac{k}{r} -\epsilon\right)^2- H^2\left(\frac{k}{r} -\frac{1}{2}\right)\left(\frac{k}{r} -\epsilon\right).
	\end{equation*}
	Thus when $H^2 \rightarrow +\infty$, we know 
	\begin{equation*}
	\alpha \ \rightarrow\ \frac{H^2}{2} \left(\frac{k}{r} -\epsilon \right)\left(-\frac{k}{r} -\epsilon + 1  \right)
	\end{equation*}
	and so 
	\begin{equation}\label{limit-2}
	\frac{r}{k}w_1 \ \rightarrow\ H^2\left(\frac{k}{r} -\frac{1}{2}\right)+ \frac{H^2r}{2k} \left(\frac{k}{r} -\epsilon \right)\left(-\frac{k}{r} -\epsilon + 1  \right).%-\frac{r}{k}.
	\end{equation}
	The line $\ell_1$ intersects the vertical line $b = \frac{k}{r} -\frac{1}{r(r-1)}$ at a point with $w = \frac{r}{k} w_1(\frac{k}{r} -\frac{1}{r(r-1)})$. We claim if $g \gg 0$, this point lies in $U$, i.e.
	\begin{equation}\label{cond.e1}
	\frac{r}{k} w_1 > \Gamma\left(\frac{k}{r} -\frac{1}{r(r-1)}\right) \frac{1}{\frac{k}{r} -\frac{1}{r(r-1)}}\,.
	\end{equation}
	The limit of the right hand side when $H^2 \rightarrow +\infty$ is $\frac{H^2}{2}(\frac{k}{r} -\frac{1}{r(r-1)})$, so \eqref{limit-2} shows that \eqref{cond.e1} holds if 
	\begin{equation*}
\left(\frac{k}{r} -\epsilon \right)\left(-\frac{k}{r} -\epsilon + 1  \right) > \frac{k}{r} \left(1-\frac{k}{r} -\frac{1}{r(r-1)}\right).
	\end{equation*} 
	This is equivalent to 
	\begin{equation*}
	\epsilon -\epsilon^2 < \frac{k}{r^2(r-1)} 
	\end{equation*}
	which is satisfied by our assumption on $\epsilon = \frac{k}{r} - b_2^*$ in Definition \ref{def-ellstar}.

	\bigskip
	
	Similarly, we have
	\begin{equation*}
	w_2 = H^2\frac{k-r}{r} \left(\frac{k}{r} -\frac{1}{2} \right) + \alpha-1
	\end{equation*}
	We require 
	\begin{equation}\label{cond.e2}
	\frac{r}{k-r} w_2 < \Gamma\left(\frac{k-r}{r} +\frac{1}{r(r-1)}\right) \frac{1}{\frac{k-r}{r} +\frac{1}{r(r-1)}}\,.
	\end{equation}
	The limit of the left hand side is
	\begin{align*}
	\lim\limits_{H^2 \rightarrow +\infty} \frac{r}{k-r}w_2 =\ & H^2\left(\frac{k}{r} -\frac{1}{2}\right)+ \frac{H^2r}{2(k-r)} \left(\frac{k}{r} -\epsilon \right)\left(-\frac{k}{r} -\epsilon + 1  \right)
	\end{align*}
	and the limit of the right hand side is $\frac{H^2}{2}(\frac{k-r}{r} +\frac{1}{r(r-1)})$. Thus \eqref{cond.e2} holds for $g \gg 0$ if 
	\begin{align*}
	\frac{r}{k-r} \left(\frac{k}{r} -\epsilon \right)\left(-\frac{k}{r} -\epsilon + 1  \right) < \frac{1}{r(r-1)} -\frac{k}{r}. 
	\end{align*}
	This is equivalent to 
	\begin{equation*}
	\epsilon - \epsilon^2 < \frac{r-k}{r^2(r-1)}
	\end{equation*}
	which holds again by our assumptions on $\epsilon = \frac{k}{r} -b_2^*$. 
\end{proof}

%This is the intersection of a parall line passing through the origin:
%Suppose $\frac{k}{r} < \frac{1}{2}$, the line passing through the origin of slope $H^2 \left(\frac{k}{r} -\frac{1}{2} \right)$ intersects $\Gamma$ at two points $b_1< b_2$. We know $b_2 \in (0, \frac{1}{2})$, because
%\begin{equation*}
%b_2^2(\frac{H^2}{2} +1) -1 = H^2 \left(\frac{k}{r} -\frac{1}{2} \right)b_2
%\end{equation*}
%Thus\begin{equation*}
% \lim\limits_{H^2 \rightarrow +\infty } b_2 = \lim\limits_{H^2 \rightarrow +\infty }\frac{H^2(\frac{k}{r} -\frac{1}{2}) + \sqrt{(H^2)^2(\frac{k}{r} -\frac{1}{2})^2   }  } {H^2+2} = 0 
%\end{equation*}
%If $b_1 < -\frac{1}{2}$, then 
%\begin{equation*}
%b_1^2(\frac{H^2}{2} +1) +2b_1 = H^2 \left(\frac{k}{r} -\frac{1}{2} \right)b_1
%\end{equation*}
%Thus 
%\begin{equation*}
%\lim\limits_{H^2 \rightarrow +\infty} b_1 = 2\frac{k}{r} -1
%\end{equation*}

\begin{Rem}\label{Rem-big enough}
	The genus $g = \frac{H^2}{2} +1$ is large enough for Theorem \ref{thm} if $H^2 > 2r(r+1)$ and inequalities \eqref{cond--1}, \eqref{cond--2}, \eqref{epsilon}, \eqref{cond.1}, \eqref{cond.3-before}, \eqref{cond.3}, \eqref{cond.5-before}, \eqref{cond.5}, \eqref{condition.extra}, \eqref{cond.e1}, \eqref{cond.e2}, and for the chosen values of $\epsilon, \epsilon'$ in \eqref{chosen epsilon}, inequalities \eqref{cond.2} and \eqref{cond.4} are satisfied. 
\end{Rem}

\bibliography{mybib}
\bibliographystyle{halpha}

\bigskip \noindent
{\tt{s.feyzbakhsh@imperial.ac.uk}}\medskip

\noindent Department of Mathematics, Imperial College, London SW7 2AZ, United Kingdom

\end{document}